\documentclass[reqno, 11pt]{amsart}%
\usepackage{amsaddr} 
\usepackage{amssymb}
\usepackage[nesting]{hyperref}
\usepackage[pdftex]{graphicx}
\usepackage{listings}
\usepackage{multirow}
\usepackage{placeins}
\usepackage{color}
\usepackage{lscape}
\usepackage{dsfont}
\usepackage{amsmath}
\usepackage{amsfonts}
\usepackage{verbatim}
\usepackage{accents} 
\usepackage{todonotes}
\usepackage{mathtools}
\usepackage{bbm}
\usepackage{natbib}
\usepackage{lipsum}
\usepackage{caption}
\usepackage{subcaption}
\usepackage{afterpage}

\usepackage[ruled,vlined]{algorithm2e}

\usepackage{xcolor}
\usepackage{upgreek}
\usepackage{float}

\newcommand\cA{\mathcal A}

\newcommand\cF{\mathcal F}

\newcommand\cL{\mathcal L}

\newcommand\cN{\mathcal N}

\newcommand\cS{\mathcal S}

\newcommand\EE{\mathbb E}

\newcommand\PP{\mathbb P}
\newcommand\NN{\mathbb N}
\newcommand\RR{\mathbb R}
\newcommand\TT{\mathbb T}

\newcommand\ZZ{\mathbb Z}

\newtheorem{theorem}{Theorem}

\newtheorem{lemma}[theorem]{Lemma}

\theoremstyle{definition}

\newtheorem{remark}[theorem]{Remark}

\newtheorem{prob}[theorem]{Problem}

\newtheorem{assumption}{Assumption}  
\newcommand{\BlackBoxes}{\global\overfullrule5pt}

\BlackBoxes
\usepackage{lastpage}

\begin{document}

\title[MDP of the Third Kind]{Markov Decision Processes of the Third Kind: Learning Distributions by Policy Gradient Descent}

\author[N. \smash{B\"auerle}]{Nicole B\"auerle}
\address[N. B\"auerle]{Department of Mathematics,
Karlsruhe Institute of Technology (KIT), D-76128 Karlsruhe, Germany, \href{mailto:nicole.baeuerle@kit.edu}{nicole.baeuerle@kit.edu}}

\author[A. \smash{Vasileiadis}]{Athanasios Vasileiadis$^*$}
\address[A. Vasileiadis]{Department of Mathematics,
Karlsruhe Institute of Technology (KIT), D-76128 Karlsruhe, Germany, \href{mailto:nicole.baeuerle@kit.edu}{athanasios.vasileiadis@kit.edu}, \footnote{$^*$Corresponding author}}



\begin{abstract}
The goal of this paper is to analyze distributional Markov Decision Processes as a class of control problems in which the objective is to learn policies that steer the distribution of a cumulative reward toward a prescribed target law, rather than optimizing an expected value or a risk functional. To solve the resulting distributional control problem in a model-free setting, we propose a policy-gradient algorithm based on neural-network parameterizations of randomized Markov policies, defined on an augmented state space and a sample-based evaluation of the characteristic-function loss. Under mild regularity and growth assumptions, we prove convergence of the algorithm to stationary points using stochastic approximation techniques. Several numerical experiments illustrate the ability of the method to match complex target distributions, recover classical optimal policies when they exist, and reveal intrinsic non-uniqueness phenomena specific to distributional control.
\end{abstract}

\maketitle

\makeatletter \providecommand\@dotsep{5} \makeatother

\vspace{0.5cm}
\begin{minipage}{14cm}
{\small
\begin{description}
\item[\rm \textsc{ Key words}]
{\small Distributional MDPs, Policy Gradient, Randomized controls }

\end{description}
}
\end{minipage}\\

\section{Introduction}
Since the introduction of Reinforcement Learning (RL), tremendous progress has been achieved in designing algorithms that allow agents to learn optimal behavior through interaction with uncertain environments. Markov Decision Processes (MDPs) has been so far the theoretical framework to model these problems, \cite{Puterman1994MDP, Bertsekas-Shreve,Powell2011ADP,BauerleRieder}. When all the involved functions i.e. the dynamics and rewards/costs, are known and the spaces finite, we can use exact dynamic programming methods usually in a tabular form. When the spaces are continuous we need some form of function approximation. For what concerns this present article, RL is what we do when we only observe samples from the states actions and rewards, without access to any part of the theoretical model.     
 
 Classical RL methods focus on estimating the expected cumulative return, the classical value function of control, which has led to impressive successes across domains ranging from notoriously hard games like chess and GO to modern life robotics. We call this mathematical formulation \emph{MDPs of the first kind}. However, not all problems we are facing can be solved by the classical value functions, for example portfolio selection under model uncertainty, robust control and in general risk aware optimization, \cite{ContModelUncertainty,bauerleglaunerDistributionallyRobustMDPs,PrashanthFu2022RiskSensitiveRL,CoacheJaimungalRLconvexriskmeasures,coache2025robustreinforcementlearningdynamic,BauerleJaskiewiczMDPRiskSensitiveOverview,Guin_2026} call for a different formulation under coherent risk measures and a time consistent dynamic programming principle, \cite{ArtznerCoherentMeasuresofRist}. We call collectively this mathematical formulation \emph{MDPs of the second kind}. Finally, motivated by recent advances in finance, \cite{LassanceVrins2023TargetDistribution, ColaneriEisenbergSalterini2023TerminalDist}  and AI (especially generative AI), we are interested in optimal controls that stir the whole distribution of (discounted) cumulative rewards towards a certain target distribution. 
 We call these (Distributional) MDPs,  \emph{MDPs of the third kind}. In this article we restrict to problems with a finite time hoizon.
 
 In complete analogy, Distributional Reinforcement Learning (DRL) addresses the problem in a practical way, using only samples by modeling the entire distribution of returns, providing a richer representation that can enhance stability, performance, and risk-sensitive decision making. 

\subsection{Motivation}
\label{se:motivation}
To motivate our results we start by giving an answer to a simple variant of our problem that we can actually solve analytically, so we get a flavor of what to expect in more complicated and general cases that we treat in the rest of the article. 

\begin{prob}
\label{prob:informal}
	Find optimal controls that shift the distribution of a terminal cumulative (discounted) reward (cost) such that it matches a predefined target. 
\end{prob}

On a compact space, the Fourier representation of a probability measure is a sequence of coefficient and matching this is a finite dimensional problem. In particular we can match directly by identifying the coefficients of sequence.  

To illustrate this, consider $\mathbb{T}^{1} = \mathbb{R}/\mathbb{Z}$, the one dimensional torus endowed with its Borel $\sigma$-algebra.
For a finite horizon $T \in \mathbb{N}$ consider the Markov Decision Process
\begin{equation*}
s_{t+1} = \bigl(s_{t} + a_{t} + \varepsilon_{t+1}\bigr) \bmod 2\pi,
\qquad t = 0,\dots,T-1,
\end{equation*}
where $s_0 \in \mathbb{T}^{1}$ is an $\mathcal F_0$-measurable initial condition with distribution $\mu_0$. The reward is given by $r_t(s_t,a_t)=0$ for $t=0,1,\ldots,T-1$ and $r_T(s_T,a_T)=s_T.$  Next,  $a_{t}\in\mathcal A\subset\mathbb{T}^{1}$ is the action chosen by the decision maker at stage $t$ which may depend on the observed history of the process. We consider randomized policies here.
Finally, $(\varepsilon_{t})_{t\ge 1}$ is an i.i.d. white noise sequence on $\RR$, independent of $s_0$. All sums are taken in $\mathbb{R}$ and reduced modulo $2\pi$ to lie again in $\mathbb{T}^{1}$.

The objective is to control the process in such a way that the distribution $\mu_T$ of the accumulated reward $ \sum_{t=0}^T r_t(s_t,a_t)=s_T$ is as close as possible to a given target distribution $\mu^*$ on $\mathbb{T}^1$. The distance is measured as a distance of characteristic functions. We will be more precise later.
For the purpose of illustration let us assume that $T=1$. In this case we can compute the characteristic function of $s_1$ explicitly. First, consider  $y_1=s_0+a_0+\varepsilon_1.$  Then the law of $y_1$ is a Gaussian centered at $s_0+a_0$ if $a_0$ is deterministic and if $a_0$ is stochastic with distribution $\nu$ then 
$$\PP(y_1\le z)= \int_\RR \Phi(\frac{z-x_0-u}{\sigma}) d \nu(u)  $$
i.e. the distribution of $y_1$ is given by the convolution 
$ \delta_{x_0}*\nu * \Phi.$
Second, we need to consider the nonlinear transformation $w:\RR \to \TT^1$  given by
 $$y\bmod{2\pi}=w(y)= y-2\pi \lfloor \frac{y}{2\pi}\rfloor. $$ 
In such a situation, if  \(y\sim\mathcal{N}(m,\sigma^{2})\) on \(\mathbb{R}\) and 
$s \;=\; w(y)  \;\in\; \mathbb{T}^{1},$ then the density of \(s\) with respect to Lebesgue measure on \([0,2\pi)\) is the
\emph{wrapped Gaussian} (or \emph{circular normal}): 
\[
\Phi^\text{wrap}(x;m,\sigma^{2})
 \;=\;
 \frac{1}{\sqrt{2\pi\sigma^{2}}}
 \sum_{k\in\mathbb{Z}}
     \exp\!\Bigl(-\frac{(x-m+k)^{2}}{2\sigma^{2}}\Bigr)=\frac{1}{\sqrt{2\pi}}\Bigl\{
 1+2\sum_{k=1}^{\infty}e^{\frac{k^2\sigma^{2}}{2}}\cos\bigl(2k\, (x-m)\bigr)\Bigr\} .
\]
Thus, wrapping the distribution in $\TT^1$ gives 
$$\PP(s_1\le z)= \int_{\TT^1} \Phi^\text{wrap}(\frac{z-x_0-u}{\sigma}) d \nu(u).  $$
When we want to match a certain characteristic function we can do it by choosing $\nu$ accordingly. Note that the characteristic function of a distribution $\mu$ on $\TT^1$ is determined by the sequence $\hat{\mu}(n) :=  \int_{\TT^1} e^{-i n u} \, d\mu(u)$ for $n \in \mathbb{Z}$,
see for example \cite[Chapter 1]{rudin1962fourier}.
Further note that the characteristic function of the wrapped Gaussian density centered at \( s_0 \)  is given by
\[
\widehat{\Phi}^{\text{wrap}}(n; s_0,\sigma^2) = e^{-i n s_0} e^{-\frac{1}{2} \sigma^2 n^2}, \quad n \in \mathbb{Z}.
\]
Thus, identifying the characteristic function of $s_1$ with the one of target distribution $\mu^*$ yields $\mu^*(n)=\widehat \mu_1(n)$ for all $ n\in \ZZ$.
Using the convolution property of the characteristic function and solving for the policy that we want to determine yields
$$\widehat \nu(n) = \frac{\widehat\mu^*(n)}{\widehat{\Phi^{\text{wrap}}_{s_0,\sigma}}(n)}=\widehat\mu^*(n) \cdot e^{i n s_0}\; e^{\frac{1}{2} \sigma^2 n^2} \quad \forall\, n\in \ZZ. $$

	It is relatively obvious from the discussion above that when we are considering only deterministic controls we just shift the initial condition without any hope to match any nontrivial target distribution.  

\subsection{Extension to Continuous Non Compact Spaces} 
Of course as illustrative and appealing as it is, this example is of limited use and we present it only for pedagogical reasons. The main question is \emph{how to extend to continuous non compact Borel spaces with non linear dynamics?} When we cannot trace the functional form of the distribution of controls since we don't have anymore convolutions and the Fourier representation cannot be broken down analytically.  

To motivate our choices in Section \ref{sec:Problem+Ass} we first notice that to extend the problem in case of continuous non compact spaces we need to define a distance between the characteristic functions of the reward $R_T=\sum_{k=0}^{T-1} r_k(s_k,a_k)$ and a target characteristic function $\varphi^*$: 
$$ \int_{\mathbb{R}} \big|\varphi^*(u) - \EE[e^{iuR_T}]\big|^2 \, w(u)\, du,$$
with some weight function $w$ (e.g.\ Gaussian) to make the integral finite and emphasize relevant frequencies. A natural choice for working directly with probability measures could also be the Wasserstein distance but with a serious caveat that it is not differentiable out of the box and thus not easy to use as a loss function for our purposes. The theoretical foundations of problems like this have been laid in \cite{bauerle2025distributionalbellmanequation}. The authors there give a dynamic programming equation for the solution. However, the question still remains how these problems can numerically be solved efficiently.
Thus, in this paper we use artificial neural networks (ANNs) to parametrize the controls where we condition them on the current state and reward and using some external noise in order to learn to sample optimally from the unknown action distribution $\nu$ when we have no access and or assumption on it. The ANN parameters are trained by a gradient descent scheme. 

 To wrap up, we design a \emph{model free} policy gradient algorithm using only samples from the dynamics and rewards to minimize a characteristic function loss.  

\subsection{Choosing the Target}
There are several ways to choose the target distribution. This may either be a desired risk-return distribution when we think about investment problems where a certain risk profile is targeted. Another setting could be the problem of imitating others: Suppose you can observe the rewards/outcome of an opponent but you do not know her/his actions. Then you can easily obtain her/his characteristic target function of cumulative rewards and use this to find out a control which is consistent with this behavior. A third application is to design a Markov model such that the behavior in terms of cumulative rewards behaves as observed in nature.

\subsection{Literature Review}
Distributional Reinforcement learning has gained in recent years a lot of attraction with several important contributions. 

One of the most influential advances is the quantile-regression formulation of  \cite{dabney2018distributional}, which replaces the fixed-support categorical approximation of C51 from the seminal paper \cite{bellemare2017distributional} with a trainable quantile parameterization. This representation yields a theoretically consistent approximation of value-return distributions in the 1-Wasserstein metric and resolves several limitations of earlier approaches, most notably by providing unbiased stochastic gradient estimates via asymmetric quantile losses and by ensuring that the projected Bellman operator is a contraction.  

Further along this line of research on \emph{value-based control} the article of \cite{bellemare2020distributional} introduces the first convergence guarantees for a distributional RL algorithm combined with function approximation, addressing a central theoretical gap left open by earlier works such as C51 and its analysis by \cite{bellemare2017distributional}. Its key contribution is the construction of a fully Cram\'er-based loss free of softmax and KL divergence which extends the Cram\'er distance to arbitrary real vectors and augments it with a normalization penalty, enabling a new algorithm (S51) whose updates are mathematically tractable. Using this generalized loss, the authors prove that the projected distributional Bellman operator converges under linear function approximation and quantify the resulting approximation error, showing, perhaps unexpectedly, that distributional methods can yield worse expected-value accuracy than classical TD under function approximation. 

In \cite{achab2023onestepdistributionalreinforcementlearning}, the authors introduce a simplified distributional framework that resolves one of the central theoretical difficulties of classical distributional RL: the instability of the optimality operator in the control case. By restricting attention to the randomness induced by the first transition only, the authors construct one-step distributional Bellman operators that unlike the original distributional operators are $\gamma$-contractions in all Wasserstein metrics for both policy evaluation and control, leading to a unified almost-sure convergence theory even without assuming uniqueness of the optimal policy. Using these operators, they develop new Cram\'er-projected algorithms (tabular and deep variants such as OS-C51) whose categorical updates are simpler, cheaper, and provably convergent.

Nevertheless, this line of work remains fundamentally oriented toward \emph{value-based control}: the goal is to estimate the distribution of the return induced by a fixed policy and to extract improved greedy or risk-sensitive actions from it. By contrast, the problem studied in this article is of a different nature: instead of estimating an endogenous return distribution, we aim to \emph{steer} the distribution of a terminal random variable toward a prescribed target law by directly optimizing the policy. Whereas quantile-based DRL methods rely on Bellman recursions, additive rewards, and the value-function framework, our approach operates on a lifted state space of probability measures and optimizes a characteristic-function divergence akin to a policy iteration method. We show in Example \ref{se:forzen_lake} that our framework is general and flexible enough that it can accommodate solutions to some classical RL problems with minimal modifications. 

Recent work has extended distributional reinforcement learning to the continuous-time setting by characterizing the evolution of return distributions through a distributional analogue of the backward Kolmogorov equation. In \cite{wiltzer2022distributional} the authors derived what they call the distributional HJB equation, but which is in fact a linear backward Kolmogorov PDE for the state-conditioned return CDF under a fixed policy, thereby providing the first continuous-time description of return distributions beyond expectations. To make this infinite-dimensional PDE numerically tractable, they introduced the statistical HJB (SHJB) loss, which replaces the full distribution by a finite set of statistics (e.g., quantiles) and measures the discrepancy between the approximate CDF and the PDE operator. However, that work did not establish whether minimizing the SHJB loss actually produces consistent approximations of the true return distribution. This gap is filled by \cite{alhosh2025tractablerepresentationsconvergentapproximation}, who prove that if the chosen imputation strategy (mapping finite statistics to a distribution) satisfies a mild topological condition, namely convergence in the sense of tempered distributions then SHJB minimization is provably convergent: as the number of statistics increases, the approximate solutions converge to the true solution of the Kolmogorov PDE. Moreover, they show that the widely used quantile representation satisfies this condition and achieves an $\mathcal{O}(\frac{1}{N})$ approximation error for the CDF, thereby giving the first rigorous justification that quantile-based continuous-time DRL is a sound and convergent discretization scheme for the distributional Kolmogorov equation. 

In contrast to these works, which focus primarily on \emph{estimating} the return distribution under a policy (or its continuous-time evolution), \cite{bauerle2025distributionalbellmanequation,pires2025optimizing}  introduce distributional dynamic programming, a general DP framework for \emph{optimizing arbitrary statistical functionals of the 
return distribution}. Their approach is the closest to ours in this article. We agree on the definition of the distributional MDP which augments states with cumulative reward (stock) and combines with a distributional Bellman recursion defined over this augmented MDP, enabling principled optimization of objectives such as 
quantiles, CVaR, and other risk-sensitive functionals. In this way, distributional DP broadens the scope of distributional RL by integrating distributional objectives directly into the dynamic programming principle, complementing earlier work on distributional estimation in both discrete and continuous time. In contrast with \cite{pires2025optimizing}, like in \cite{bauerle2025distributionalbellmanequation} we resort to a lifted distributional MDP (see Section \ref{sec:liftedMDP})  to formally describe the Bellman recursion and identify the class of optimal policies, mainly for two reasons, first one being the deterministic nature of the lifted MDP and the time homogeneous Markovian structure of the augmented MDP.  

Last but not least we mention two works that connect stochastic control with optimal transport and are very close in spirit to ours but different in methods. In \cite{Alouadi2025SBTS} the authors study time series generation through the Schrödinger Bridge (SB) framework, which formulates generative modeling as a stochastic control problem on path space: one selects a drift process minimizing a quadratic control (relative-entropy) cost with respect to a Brownian reference measure, subject to matching prescribed marginal distributions of the state process at given time points. The main conceptual difference lies in the location of the distributional objective: while the SB approach enforces distributional constraints directly on the state process itself, we consider controlled dynamics augmented with a cumulative reward variable and impose the distribution-matching objective on this derived random quantity rather than on the state trajectory. In this sense, rewards act as an additional layer on top of the dynamics, transforming path distributions into outcome distributions. Unlike the SB formulation, we are not aware of any optimal transport or entropic regularization interpretation on path space for our loss. Nevertheless, \cite{Alouadi2025SBTS} is very interesting in conjunction with \cite{wiltzer2022distributional} for a general bridge interpretation. 

 In \cite{TerpinLanzettiDorflers2024DPOT} the authors consider fleets of identical agents whose dynamics are non-interacting, each agent evolves independently according to a common controlled transition map, and the population state evolves as the pushforward of the current distribution through this single-agent dynamics. Any coupling between agents arises exclusively through the objective function, via optimal transport discrepancies that compare the induced population distribution to a prescribed reference measure. As a result, the separation principle identified in that work critically relies on this absence of interaction in the dynamics. Here, we stress once more the difference that we operate in a lifted distributional MDP and rely on a direct policy method. However we find the connection with optimal transport very interesting at least in the case of value iteration.

\subsection{Summary and Organisation of the Article}

The remainder of the article is organized as follows. In Section~\ref{sec:Problem+Ass}, we formally introduce the distributional control problem, define the characteristic–function matching objective, and reformulate it as a lifted distributional Markov Decision Process on an augmented state space. We also specify the class of randomized Markov policies considered and state the standing assumptions on the dynamics, rewards, and neural-network parametrization. Section 3 presents the policy-gradient algorithm for distribution matching, including the discretization of the Fourier domain, the construction of an unbiased (up to finite-sample effects) stochastic gradient estimator, and the full learning procedure. In Section 4, we establish our main theoretical result, proving convergence of the proposed algorithm to a stationary point under standard stochastic approximation conditions. Section~\ref{sec:appl} illustrates the methodology on a series of numerical examples, namely linear–quadratic control, an investment problem, a compactly supported distribution and classical benchmark MDPs, highlighting both the flexibility and the intrinsic non-uniqueness of distributional optimal controls. Technical proofs and auxiliary results are collected in the Appendix.

\section{The Problem and Assumptions}\label{sec:Problem+Ass}
\subsection{Problem Formulation}
The main purpose of this section is to give a formal definition to our general problem. To this end, we assume a finite time horizon $T\in\mathbb{N}$ and let us denote the state space and action space  with $\mathcal{S}$ and $\mathcal{A}$, respectively. We assume that both are Borel subsets of $\RR$.   Let $(\Omega, \mathcal{F}, \mathbb{P})$ be a complete probability space and $(\varepsilon_t)_{t=0}^{T-1}$ a sequence of i.i.d. standard normal random variables defined on this space. For ease of notation we consider a stationary model here, but the extension to non-stationary systems is immediate.
The system evolves according to the stochastic dynamics:
\begin{equation}
\label{eq:def:nonlinear_dynamics}
    s_{t+1} = F(s_t, a_t, \varepsilon_{t+1}), \quad \varepsilon_{t+1} \sim \mathcal{N}(0,1),
\end{equation}
with initial state $s_0$  deterministic or random $\mathcal{F}_0$-measurable. Furthermore, $F:\cS \times \cA \times \RR \to \cS$ is a measurable transition function and $a_{t}\in\mathcal A$ is the action chosen at time point $t$. We assume that the decision maker can decide about   a  policy $\sigma=(\sigma_0,\ldots,\sigma_{T-1})$ where $\sigma_t$ is the randomized, history dependent decision rule at time $t$, i.e. $a_t\sim \sigma_t(\cdot|s_0,a_0,\ldots,s_t)$.  

Assume a measurable one-stage reward $r:\cS \times \cA \to \RR$. The cumulative reward for a trajectory up to time $t$ is:   

\begin{equation}
    R_t = \sum_{k=0}^{t-1} r(s_k, a_k). 
\end{equation}

For this work we choose to see $R_T$ as a random variable and we are interested in its distribution. The distribution depends on the chosen policy $\sigma$, hence we denote it by $\text{Law}^\sigma(R_T)$. 

When we are given a target distribution $\mu^\star$ on $\RR$, our goal is to find a policy $\sigma$  such that 
\begin{equation}
\label{eq:matching equation}
    \text{Law}^\sigma(R_T) \approx \mu^\star.
\end{equation}

To formalize the matching, we introduce a divergence between probability distributions, the weighted squared $L^2$ distance between the characteristic functions:
\begin{equation}
\label{eq:def: general Loss} 
    \mathcal{L}(\sigma) := \int_{\mathbb{R}} \left| \varphi^\star(u) - \varphi_{\sigma}(u) \right|^2 w(u) \, du,
\end{equation}
where $\varphi_{\sigma}(u) := \mathbb{E}^\sigma[e^{iu R_T}]$ is the characteristic function of the real-valued random variable $R_T$ when policy $\sigma$ is chosen, $\varphi^\star$ is the characteristic function of $\mu^*$ and $w: \mathbb{R} \to \mathbb{R}_+$ is a suitable weighting function satisfying the weight moment assumptions 
\begin{equation*}
  \int_\RR  w(u) du < \infty, \quad   \int_\RR |u| \cdot w(u) du < \infty, \quad  \int_\RR u^2 \cdot w(u) du < \infty.
\end{equation*}
Distances like these are popular in statistics for testing distribution hypothesis, \cite{epps1983test,baringhaus1988consistent}.
The problem becomes:
\begin{equation}
\label{eq:general learning problem}
    \min_{\sigma} \mathcal{L}(\sigma).
\end{equation}

Now, we reformulate Problem \eqref{eq:general learning problem} according to \cite{bauerle2025distributionalbellmanequation} as a \emph{lifted MDP} in order to distinguish classes of controls among which we will look for optimal ones. 

 \subsection{Lifted Distributional MDP Formulation} \label{sec:liftedMDP}
 We now recast optimization problem \eqref{eq:general learning problem} as a Markov Decision Process on an \emph{augmented state space} that captures the distribution of outcomes. 

To formulate first the \emph{distributional MDP}, we retain the original notation and introduce an \emph{augmented state variable} $(s_t, R_t)$ taking values in the product space $\mathcal{S} \times \mathcal{R}$, where $\mathcal{R} \subseteq \mathbb{R}$ is the set of possible cumulative rewards. The law of $(s_t, R_t)$ under a policy $\upsigma$ (and fixed initial distribution) is denoted $\mathsf F_t^\upsigma \in \mathcal{P}(\mathcal{S} \times \mathcal{R})$, i.e., the joint distribution of $(s_t, R_t)$. In particular, $\mathsf F_0 = \delta_{s_0 \times 0}$ is the initial distribution of $(s_0, R_0)$. The terminal law $\mathsf F_T^\sigma$ encodes the distribution of both the terminal state $s_T$ and the cumulative reward $R_T$. Note that the marginal of $\mathsf F_T^\sigma$ on the $R$-coordinate is precisely $\text{Law}(R_T)$. 

We can now define a \emph{lifted distributional MDP} whose state at each time is the joint distribution $\mathsf F_t^\sigma$ of  $(s_t, R_t)$. The lifted model is specified as follows:

\begin{enumerate}
\item[a)] \textbf{State space:} $\mathcal{X} := \mathcal{P}(\mathcal{S} \times \mathcal{R})$. An element $\mathsf F \in \mathcal{X}$ is a joint distribution of the state and accumulated reward. At time $t$, the lifted state is $\mathsf F_t = \text{Law}^\sigma(s_t, R_t)$ under the current policy $\sigma$. This lifted state encodes all the necessary information of the process.

\item[b)] \textbf{Action space:} $\Pi^M := \{\pi: \mathcal{S} \times \mathcal{R} \to \mathcal{P}(\mathcal{A})\}$, the set of Markov (possibly randomized) decision rules on the augmented state. That is, $\pi \in \Pi^M$ is a conditional distribution $\pi(da|s, R)$ prescribing an action distribution given state $s$ and cumulative reward $R$. A lifted policy is a sequence $\pi_0, \pi_1, \ldots, \pi_{T-1}$ with $\pi_t \in \Pi^M$.

\item[c)] \textbf{One-stage reward:} $0$ for each $t < T$. Intermediate transitions are assigned zero reward, as only the terminal outcome matters.

\item[d)] \textbf{Terminal reward functional:} $H: \mathcal{P}(\mathcal{S} \times \mathcal{R}) \to \mathbb{R}$ is defined by
\[
H(\mathsf F) := \int_\RR \Big|\varphi^*(u)- \int e^{iux} \mathsf F(\mathcal{S},dx)\Big|w(u)du.
\]

\item[e)] \textbf{State-transition operator:} $T: \mathcal{P}(\mathcal{S} \times \mathcal{R}) \times \Pi^M \to \mathcal{P}(\mathcal{S} \times \mathcal{R})$ describes the evolution of the distribution under a policy. Given $\mathsf F \in \mathcal{P}(\mathcal{S} \times \mathcal{R})$ and $\pi \in \Pi^M$, the next distribution $\mathsf F' = T^{\pi}(\mathsf F)$ is the law of $(s_{t+1}, R_{t+1})$ where:
\[
\begin{aligned}
(s_t, R_t) &\sim \mathsf F, \\
a_t &\sim \pi(\cdot | s_t, R_t), \\
s_{t+1} &= F(s_t, a_t, \varepsilon_{t+1}), \\
R_{t+1} &= R_t + r(s_t, a_t).
\end{aligned}
\]
Formally, for any measurable set $B \subseteq \mathcal{S} \times \mathcal{R}$,
\[
T^{\pi}(\mathsf F)(B) = \int_{\mathcal{S} \times \mathcal{R}} \int_{\mathcal{A}} \mathbb{P}\left\{ (F(s,a,\varepsilon),\; R + r(s,a)) \in B \right\} \pi(da|s,R)\,F(ds,dR).
\]
\end{enumerate}

This defines a finite-horizon deterministic dynamic program on the space of probability measures, with state variable $\mathsf F_t \in \mathcal{P}(\mathcal{S} \times \mathcal{R})$. Any policy $\upsigma$ in the original model induces a sequence $\mathsf F_0 \xrightarrow{\sigma_0} \mathsf F_1 \xrightarrow{\sigma_1} \dots \xrightarrow{\sigma_{T-1}} \mathsf F_T$ with terminal objective $H(F_T)$ and similar any sequence $(\pi_0, \ldots, \pi_{T-1})$ of kernels in the lifted MDP defines a feasible path $\mathsf F_0 \xrightarrow{\pi_0} \mathsf F_1 \xrightarrow{\pi_1} \cdots \xrightarrow{\pi_{T-1}} \mathsf F_T$. It is possible to construct the sequence  $(\pi_0, \ldots, \pi_{T-1})$ such that distribution sequences coincide (see \cite{bauerle2025distributionalbellmanequation} Prop. 2.2)
Hence, the original problem \eqref{eq:general learning problem} is equivalent to:
\begin{equation} \label{eq:equivalent_value_function}
\min_{(\pi_0,\ldots,\pi_{T-1})} H\left(T^{\pi_{T-1}} \circ \cdots \circ T^{\pi_0}(\mathsf F_0)\right).
\end{equation}

For completeness we define the value functional $V_t: \mathcal{P}(\mathcal{S} \times \mathcal{R}) \to \mathbb{R}$ by:
\begin{equation}
	\label{eq:value_function}
V_T(\mathsf F) := H(\mathsf F), \quad
V_t(\mathsf F) := \inf_{\pi \in \Pi^M} V_{t+1}\left(T^{\pi}(\mathsf F)\right), \quad t = T-1, \dots, 0.	
\end{equation}

Then $V_0(\mathsf F_0)$ gives the optimal value of the lifted problem and equals the value in \eqref{eq:general learning problem}.
Finally, a key consequence of this formulation is that the search for an optimal policy may be restricted \emph{without loss of generality} to actions $\pi_t \in \Pi^M$ on  $(s_t, R_t)$. Any history-dependent policy can be equivalently represented as a sequence $(\pi_0, \dots, \pi_{T-1})$ producing the same law of outcomes, see \cite[Remark 2.2]{bauerle2025distributionalbellmanequation}.  
However, due to the complicated state and action space in the lifted MDP it is computationally very hard to perform the value iteration algorithm \eqref{eq:value_function}. In order to circumvent this problem we define a parametrized policy class and use gradient descent to obtain the optimal policy.

\subsection{Controls}
Now, in contrast to value iteration methods based on $V_T(\mathsf F)$ we chose to use $H(\mathsf F)$ as a target function for a parametric policy iteration in line with Motivation \ref{se:motivation}. As we already explained, the lack of analytical traceability for the distribution of the actions calls us for function approximation. 
We will choose artificial neural nets (ANNs) to approximate the decision rules. Their use, in our work is very close in the spirit of \cite{Goodfellow2014GAN}, in particular, we learn an optimal sampling mechanism from the unknown distribution of controls, while making no assumption on their distribution. We stress that our method is likelihood and model free. Very relevant to ours but with a direct paramterization is \cite{han2016deep,hure2021deep}. In our work, we make use of Lipschitz assumptions and properties of the MDP to prove convergence of the algorithm. This has also been done in \cite{pirotta2015policy} where, however the authors parametrize the probability law of the action. In \cite{fatkhullin2023stochasticpolicygradientmethods}, the authors deal as well with an implicit method but assumpting a density.

More precisely  we further assume that on our  probability space $(\Omega, \mathcal{F},\mathbb{P})$ there exists another  sequence of i.i.d.\  standard normal random variables $(z_t)$ which is independent from $(\varepsilon_t)$.
We define the control $a_t$ at time $t$, sampled from the Markovian policy $\pi_t(\cdot|s_t,R_t)$  as 
\begin{equation}
    a_t = f(\theta,s_t,R_t,z_t,t), 
\end{equation}
where $f: \Theta\times \mathcal{S}\times \mathcal R \times \mathbb{R} \times \{0,1,\ldots,T-1\}\to \mathcal{A}$ is a neural network parametrised by $\theta\in\Theta$. Note that we need $z_t$ to generated the randomness. W.l.o.g.\ we could also use a random variable uniformly distributed over $(0,1)$ or any other choice will do. 

To summarize we have the following evolution for $t=0,1,\ldots ,T-1$ of the parametrized system where $s_0$ is a given state and $R_0=0$:
\begin{eqnarray*}
a_t^\theta &=& f(\theta,s_t^\theta,R_t^\theta,z_t,t),\\
s_{t+1}^\theta &=& F(s_t^\theta,a_t^\theta, \varepsilon_{t+1}),
\end{eqnarray*}
and for $t=1,\ldots ,T$
$$R_t^\theta = \sum_{k=0}^{t-1} r(s_k^\theta,a_k^\theta).$$ 

In general, we consider a feedforward neural network with $K$ hidden layers of width $P$, taking as input $(s,R,z,t) \in \mathcal{S}\times \mathcal R \times \mathbb{R} \times \{0,1,\ldots,T-1\}$,\footnote{in practice we can either take one "big" neural network including time as a separate input variable or have $T$ different neural networks each with a set of parameters $\theta_t$, the two specifications give identical results} applying the activation function component-wise and producing a scalar output.
While our results hold for arbitrary $K, P$ for the sake of readability we assume a 2-layer neural network  

\begin{equation}
	\label{eq:definition_NN_formula}
	f(\theta,s,R,z,t) =  w^2\sigma(w^1_s s + w^1_R R+ w^1_z z + w^1_t t+ b^1)+b^2. 
\end{equation}
where $\theta=(w^1_s,w_R^1,w_z^1,w_t^1,w^2,b^1,b^2)\in \RR^7.$ For convenience we will write
\begin{equation}
	\label{eq:definition_NN_formula2}
	f(\theta,x) =  w^2\sigma(W^1 x + b^1)+b^2 
\end{equation}
for $W^1=(w^1_s,w_R^1,w_z^1,w_t^1)\in \RR^4$ and $x=(s,R,z,t)\in \mathcal{S}\times \mathcal R \times \mathbb{R} \times \{0,1,\ldots,T-1\}.$
 The convergence result for the algorithm that we later show also holds for multi-layer networks. We only need to readjust the constants to transfer the regularity via the different layers of the ANN. 

\subsection{Assumptions and Some Implications}
Throughout we need several assumptions on the model data such that we can prove convergence of our algorithm.
\subsubsection{Assumptions on the neural net data}
We assume that the parameters $\theta$ of the neural net are taken from a compact set $\Theta\subset \RR^7$ and that the activation function $\sigma$ shows a high regularity in the sense that the following assumption holds.

\begin{assumption}
	\label{as:1} The parameter set $\Theta$ is compact and $\sigma, \sigma'$ are Lipschitz-continuous and non-decreasing. We allow that $\sigma'$ does not exist at isolated points.
\end{assumption}

Note that for example the activation function $\sigma(x)=\tanh(x)$ or ReLu satisfy the requirements. 

\subsubsection{Assumptions on the growth of the state process}
We do not want to bound the state process because this would rule out a number of interesting applications. Instead we assume that the Markov Decision Process possesses a bounding function. A concept which is often used to bound the value function (see e.g. \cite{BauerleRieder}, Sec. 7.3).

\begin{assumption}
	\label{as:2} There exists a linear bounding function  for the MDP, i.e.\ there exist $c_b,d_b\in\RR_+$ such that for all $s\in \mathcal{S}$ and $\varepsilon \sim \cN(0,1)$
 $$\EE\left[|F(s,a,\varepsilon)| \right] \le c_b |s| + d_b \mbox{ for all } a\in \mathcal{A}$$
\end{assumption}

From the assumption of a bounding function we now obtain:
\begin{eqnarray*}
    \EE[|s_t|] &=& \EE\left[ |F(s_{t-1},a_{t-1},\varepsilon_{t})| \right] \\
    &=&  \EE\big[\EE\left[ |F(s_{t-1},a_{t-1},\varepsilon_t)|| s_{t-1},a_{t-1}\right]\big]\\
    &\le & c_b \EE[|s_{t-1}|] + d_b\le \ldots \le  c_b^t |s_0|+ d_b \sum_{k=0}^{t-1}c_b^k.
\end{eqnarray*}

In the next subsection we state a number of Lipschitz-continuity properties of our data. 

\subsubsection{Further Lipschitz-Continuity Assumptions on the Data}
For the convergence of the algorithm we have to impose further Lipschitz-continuity properties of the data.

\begin{assumption}
	\label{as:3} The transition function $F:\cS \times \cA \times \RR \to \cS$ is Lipschitz-continuous in the third component, uniformly in $s,a,$ i.e. there exits a constant $L_F>0$ such that
    $$ |F(s,a,\varepsilon)-F(s,a,\tilde\varepsilon)| \le L_F |\varepsilon-\tilde\varepsilon|, \quad \mbox{for all } s\in \cS,  a\in \cA, \varepsilon, \tilde\varepsilon \in\RR.$$
\end{assumption}

\begin{assumption} \label{ass:rFLip}
We assume
\begin{itemize}
\item[(i)]$r$ is Lipschitz continuous, i.e. for all $s,\tilde s\in \cS,a,\tilde a \in \cA $ there is a $L_r>0$ such that
$$|r(s,a)-r(\tilde s,\tilde a)| \le L_{r}\big( |s-\tilde s| + |a-\tilde a| \big) $$
\item[(ii)] $F$ is  Lipschitz continuous, i.e. for all $s,\tilde s\in \cS,a, \tilde a \in \cA $ and $\varepsilon \in\RR$ there is a $\tilde L_F>0$ such that
$$|F(s,a,\varepsilon)-F(\tilde s,\tilde a,\varepsilon)| \le \tilde L_{F} \big( |s-\tilde s| + |a-\tilde a|\big). $$
\end{itemize}
\end{assumption}

We denote by $\partial_s r, \partial_a r$ and $\partial_s  F, \partial_a F $ the derivative w.r.t.\ the corresponding component and assume the following:

\begin{assumption} \label{ass:rFLipgrad}
We assume
\begin{itemize}
\item[(i)]$\partial_s r, \partial_a r$ exist and are  Lipschitz continuous, i.e. for all $s,\tilde s\in \cS,a, \tilde a \in \cA $ and for $i=s,a$
$$|\partial_i r(s,a)-\partial_ir(\tilde s,\tilde a)| \le L_{\nabla}^{r,i} \big( |s-\tilde s| + |a-\tilde a|  \big)$$
\item[(ii)] $\partial_s  F, \partial_a F$ exist and are Lipschitz continuous, i.e. for all $s,\tilde s\in \cS,a, \tilde a \in \cA,\varepsilon,\tilde\varepsilon\in\RR $ for $ i=s,a$
$$|\partial_iF(s,a,\varepsilon)-\partial_i F(\tilde s,\tilde a,\tilde\varepsilon)| \le L_{\nabla}^{F,i} \big( |s-\tilde s| + |a-\tilde a|\big). $$
\end{itemize}
\end{assumption}

\begin{remark}
    First note that if state and action space are compact, Assumption \ref{as:2} would be satisfied. Further the condition that reward $r$ and transition function $F$
 are continuously differentiable would directly imply Assumptions \ref{ass:rFLip} and \ref{ass:rFLipgrad}.
 \end{remark}

\subsubsection{Bounds on the evolution of the process}
From what we assumed so far, we can get bounds on the state evolution. More precisely, using the Gaussian Concentration Inequality (see Appendix \ref{sec:concentration}) we obtain for constants $\eta_{t+1}>0:$
\begin{eqnarray*}
   \PP (|s_{t}-\EE[s_{t}]| > \eta_{t})  &=&  \PP (|F(s_{t-1},a_{t-1},\varepsilon_{t})-\EE[F(s_{t-1},a_{t-1},\varepsilon_{t})]| > \eta_{t})\\
   &=&  \EE\Big[  \PP (|F(s_{t-1},a_{t-1},\varepsilon_{t})-\EE[F(s_{t-1},a_{t-1},\varepsilon_{t})]| > \eta_{t}\; |\; s_{t-1},a_{t-1}) \Big]\\
   &\le & 2 \exp\!\Big(-\frac{\eta_{t}^2}{2\,L_F^2}\Big).
\end{eqnarray*}
Suppose $\gamma_{t}>0$ is given. If we choose $\eta_t = L_F \sqrt{ 2 \ln(2/\gamma_t)}$, then $ \PP (|s_{t}-\EE[s_{t}]| > \eta_{t})\le \gamma_t.$
Next let us define the event that the state process is not too far away from its expectation
\begin{equation}\label{eq:goodevent}
    G' := \bigcap_{t=1}^T \Big\{|s_t-\EE[s_t]| \le  \eta_t \Big\}
\end{equation} 
We obtain
\begin{eqnarray*}
    \PP(G') &=& 1- \PP\Big( \bigcup_{t=1}^T \{|s_t-\EE[s_t]| >  \eta_t \}\Big) \\
    &\ge& 1- \sum_{t=1}^T \PP\big(  \{|s_t-\EE[s_t]| >  \eta_t \}\big) \ge 1- \sum_{t=1}^T  \gamma_t
\end{eqnarray*}
Further we bound the action process by choosing $B^z>0$ and define
$$ G:= G' \cap \bigcap_{t=1}^T \Big\{|z_t| \le  B^z \Big\}$$
In what follows we assume that $\PP(G)\ge 1-\gamma $ for a given $\gamma>0$. This can be achieved by choosing $B^z$ and $\eta_t$ large enough.

 Instead of assuming a compact state  space we work with what we call the good event $G$. As long as $\omega\in G$ we have a bounded state and action process  and thus also bounded rewards. More precisely we obtain on $G$:
 \begin{equation}\label{eq:s_bound}
     |s_t| \le \EE[|s_t|] + L_F \sqrt{ 2 \ln(2/\gamma_t)}\le c_b^t |s_0|+ d_b \sum_{k=0}^{t-1}c_b^k+ L_F \sqrt{ 2 \ln(2/\gamma_t)} =: B_t^s
 \end{equation} 

\section{Gradient Descent Algorithm}
In order to define a gradient descent scheme for the parameters of the ANN, we first reformulate the objective \eqref{eq:general learning problem} given our ANN parametrisation as  

\begin{equation}
	\label{eq: parametrised Loss}
	\min_{\theta \in \RR^7} \cL(\theta) = \min_{\theta \in \RR^7} \int_{\mathbb{R}} \left| \varphi^\star(u) - \EE[e^{iu R_T^\theta}]  \right|^2 w(u) \, du.
\end{equation}
For the gradient descent we need the gradient of $\cL$ with respect to the parameters of the ANN. It can be computed by a pathwise derivative:
\begin{align}
\label{eq:def:Gradient_Loss}
   \nabla_\theta \mathcal{L}(\theta) =
   2 \int_{\mathbb{R}} \operatorname{Re} \left[ \overline{\left( \varphi^\star(u)-\EE[e^{iu R_T^\theta}]   \right)}\cdot \mathbb{E}\left[i u e^{i u R_T^\theta} \cdot \nabla_\theta R_T^\theta \right] \right] w(u) du
\end{align}

In order to proceed we use three approximations for the gradient: 
\subsection{Conditioning on the Good Event}
We first condition the expectation on the good event $G,$ i.e.\ instead of $\EE[e^{iu R_T^\theta}]$ etc.\ we consider
$$ \varphi_\theta(u):=\EE_G[e^{iu R_T^\theta}]:= \EE[e^{iu R_T^\theta}|G].$$
We are able to control the error that we make by this conditioning irrespective of $\theta.$ We discuss this in more detail in the next remark.

\begin{remark}[Error induced by conditioning on $G$]
Suppose $G_n$ is an increasing sequence of subset of $\Omega$ such that $G_n \uparrow \Omega$ and $\PP(G_n)=1-\gamma_n$ where necessarily $\gamma_n\downarrow 0.$ W.l.o.g.\ we assume $\gamma_0=1/2$. Denote $v_n(\theta) :=  \int_{\mathbb{R}} \left| \varphi^\star(u) - \EE_{G_n}[e^{iu R_T^\theta}]  \right|^2 w(u) \, du$ and  note that  $|e^{iu R_T^\theta}| \le 1$. In Lemma  \ref{lem:LR} we show that $\theta\mapsto R_T^\theta$ is continuous, thus due to dominated convergence $\theta\mapsto  v_n(\theta)$ is also continuous. Hence minimum points $\theta_n^*$ of $v_n$ exist for all $n\in\NN$ because $\Theta$ is compact. Moreover, we obtain for $n\ge m$ and $\theta\in \Theta$:
\begin{eqnarray*}
    |v_n(\theta)-v_m(\theta)| &\le & 4  \int \Big| \EE_{G_n}[e^{iu R_T^\theta}] -  \EE_{G_m}[e^{iu R_T^\theta}]  \Big|  w(u) du \\
    &\le &  16 \gamma_m \int_\RR  w(u) du < \infty
\end{eqnarray*}
  The right hand side obviously does not depend on $\theta$ and converges to zero for $m\to\infty.$ Thus, Thm A.1.5 in \cite{BauerleRieder} implies that any sequence of minimum points $(\theta_n^*)$ has converging subsequences with limits being optimal for $v(\theta)= \cL(\theta).$  
\end{remark}



\subsection{Discretization of the Integral}
In order to implement the algorithm we have to discretize the integral, unless we are in a special case which we discuss in Remark \ref{rem:Epps} below. We fix $K\in(0,\infty)$ and a quadrature rule on $[-K,K]$
given by nodes $(u_\ell)_{\ell=1}^L$ and nonnegative weights
$( \beta_\ell)_{\ell=1}^L$ such that, for integrable $\mathrm h$,
\[
\int_{-K}^{K} \mathrm h(u)\,w(u)\,du
\;\approx\;
\sum_{\ell=1}^L  \beta_\ell\,\mathrm h(u_\ell).
\]
Here $w(u)$ is the weight from the definition of $\cL$ and $ {\beta}_\ell  $ the  weights. $K$ should be chosen large enough such that the missing integral term is small. Thus, we consider
\begin{equation}\label{eq:LL}
  \cL_{L}(\theta)
:=\sum_{\ell=1}^{L} \beta_\ell\,\big|\varphi^\star(u_\ell)- \varphi_\theta(u_\ell) ]\big|^2.  
\end{equation}

\subsection{Grid-based Stochastic Gradient}
Finally, by coupling $(\varepsilon_{t+1},z_t)$ we draw $M\in\NN$ iid samples   of 
$R_T^{\theta,1},\ldots,R_T^{\theta,M}$ on the set $ G$ and define the estimator for the gradient $\nabla_\theta \mathcal{L}_L(\theta)$ as follows. First define the empirical characteristic function and its derivative by
$$ \hat\varphi_\theta(u) = \frac1M \sum_{j=1}^M e^{iu R_T^{\theta,j} }, \quad \widehat{\nabla_\theta \varphi_{\theta}}(u)= \frac1M \sum_{j=1}^M i u e^{iu R_T^{\theta,j}} \cdot \nabla_\theta R_T^{\theta,j}  $$
 Thus, in the end we consider
 \[
\hat\cL_{L}(\theta)
:=\sum_{\ell=1}^{L} \beta_\ell\,\big|\varphi^\star(u_\ell)-\hat\varphi_\theta(u_\ell)\big|^2.
\]
with gradient
 \begin{equation}\label{eq:approxgradient}
   g(\theta)
:= \nabla_\theta \hat\cL_{L}(\theta)= 2\sum_{\ell=1}^{L} \beta_\ell
\mathrm{Re}\Big(
 \overline{(\varphi^*(u_\ell)-\hat\varphi_\theta(u_\ell))} \cdot \widehat{\nabla_\theta \varphi_{\theta}}(u_\ell)
\Big).  
 \end{equation}

\begin{remark}\label{rem:Epps}
A special pleasant case appears when we choose a standard normal as target and weight function. I.e. we have
$$ \varphi^*(u) = e^{-1/2 u^2}, \quad w(u)= 1/\sqrt{2\pi} e^{-1/2 u^2}, \quad u\in\RR.$$
In this case we can skip the  procedure of integral approximation  since the empirical target function has a very explicit representation. More precisely
the empirical target function has the following explicit representation (see e.g. \cite{epps1983test,baringhaus1988consistent}):
\begin{eqnarray*}
  && \int_{\mathbb{R}} \left|      \varphi^\star(u)-\hat\varphi_\theta(u) \right|^2 w(u) \, du\\
&=& \frac{1}{M^2} \sum_{j,k=1}^M \exp(-\frac12 |R_T^{\theta,j}-R_T^{\theta,k}|^2) -\sqrt{2}  \frac1M  \sum_{j=1}^M \exp(-1/4 |R_T^{\theta,j}|^4) + \frac{1}{\sqrt{3}}.
\end{eqnarray*} 
\end{remark}
\subsection{Algorithm}
We conclude this section with the complete algorithm for our learning scheme. We want to stress again that the policy defines a sampling procedure, making it a likelihood-free generative model.

\begin{algorithm}[H]
\DontPrintSemicolon
\caption{Policy Gradient for distribution matching}
\label{alg:theoretical}

\KwIn{Target cumulative rewards  (or target law); policy $a_t=f(\theta,s_t,R_t,z_t,t)$; horizon $T$; learning rate $(\alpha_k)_{k=0}^\infty$;  frequency grid $(u_\ell)_{\ell=1}^{L}$ with weights $\beta_\ell$; batch size $M$.}
\KwOut{Trained parameters $\theta$.}

Compute target empirical CF, $\varphi^*$  from target data or given distribution.\\

Initialize $\theta_0$\;

\For{$k=0,1,2,\dots$}{
  
  \For{$\ell=1$ \KwTo $L$}{
  Simulate $M$ \emph{independent} trajectories for this frequency\\
    \For{$j=1$ \KwTo $M$}{
      $s_0^{\theta_k,j} \leftarrow s_0$, \ $R_0^{\theta_k,j} \leftarrow 0$\;
      \For{$t=0$ \KwTo $T-1$}{
        Sample $z_t^{j} \sim \mathcal{N}(0,1)$,\quad
        $a_t^{\theta_k,j} \leftarrow f\big(\theta_k,s_t^{\theta_k,j},R_t^{\theta_k,j},z_t^{j},t\big)$\;
        Sample $\varepsilon_{t+1}^{j} \sim \mathcal{N}(0,1)$,\quad
        $s_{t+1}^{\theta_k,j} \leftarrow F\!\big(s_t^{\theta_k,j},a_t^{\theta_k,j},\varepsilon_{t+1}^{j}\big)$\;
        $R_{t+1}^{\theta_k,j} \leftarrow R_t^{\theta_k,j} + r\!\big(s_t^{\theta_k,j},a_t^{\theta_k,j}\big)$\;
      }
    }
    
    $\displaystyle \widehat{\varphi}_{\theta_k}(u_\ell) \leftarrow \frac{1}{M}\sum_{j=1}^M e^{\,i\,u_\ell R^{\theta_k,j}_T}$\;

  }
Compute loss:
  \[
  \hat\cL_{L}(\theta_k) = \sum_{\ell=1}^{L} \beta_\ell\,
  \big| \varphi^{*}(u_\ell)-\hat{\varphi}_{\theta_k}(u_\ell) \big|^2
  \]
  Update parameters: $\theta_{k+1} \leftarrow \theta_k - \alpha_k \, g(\theta_k)$\;
  \If{$\hat \cL_L(\theta_k) < \text{threshold} $}{\textbf{break}}
}

\Return $\theta_{k+1}$\;
\end{algorithm}

\begin{remark}
	While details about the specific choice of the input parameters will be given in Section \ref{sec:appl}, we want to comment that alternatives are possible for the implementation and our goal here is not to exhaustively tests all of them. Firstly, the outer look over all nodes $u_\ell$ can be avoided using the same noise per node making the computation cheaper without affecting biasedness of the estimator. Secondly, the variance of the estimator depends on the choice of the weighing function $w(\cdot)$ and the quadrature weights $\beta_\ell$ as it will become apparent in the following section. Of course, we can use in addition, other techniques to couple the noises along iterations to reduce the variance.   
\end{remark}

The main result of our paper is now the convergence of the algorithm against a stationary point. 

\section{Convergence of the Algorithm to a Stationary Point}
In this section we state and prove our main convergence theorem. The problem \eqref{eq: parametrised Loss} is highly non-convex with respect to parameter $\theta$, and thus the best we can hope is convergence to a stationary point. We want also to emphasise that even if the loss was convex this will give a unique global minimum for the loss but no further information on the set of optimal controls. The optimal control doesn't need to be unique as  already noted in \cite[Chapter 7.4, 7.5]{Bellemare2023DistributionalRL} and we will observe this also in Section \ref{sec:appl}. In order to address questions of uniqueness of optimal controls or even define a notion of uniqueness adapted to our needs we should investigate further the value function as defined in \eqref{eq:value_function}, in the spirit of  \cite{bauerle2025distributionalbellmanequation}.\footnote{We leave this for future works} 

Our proof follows the lines of Robbins Monro stochastic approximation result see for instance \cite{Borkar2025StochasticApproxRL} and \cite[Chapter 5 and Exercise 5.30]{Bach2024LearningTheory}. The main difficulty is to prove the Lipschitzness of the gradient of \eqref{eq:LL}. To this end, we need to work in steps, transfering the regularity from our control problem to the characteristic functions and finally to the gradients. One main difficulty, is that the noise, in both control and dynamics (independently) can accumulate in the system and cause divergence. To account for this case, we need to control the noise with the help of the \emph{good event} on which  the convergence happens.       

We first prove the L-smoothness of the target function on $G$, i.e. the fact that the gradient of the target function is Lipschitz-continuous on $G$. The proof consists of a number of lemmas and is deferred to the appendix. In what follows $\|\cdot\|_2$ is the usual Euclidian norm.

\begin{theorem}\label{lem:Lipschitz_gradient}
Under Assumptions 1-5
the mapping $\theta \mapsto \nabla_\theta \mathcal{L}_{L}(\theta)$  is   globally Lipschitz continuous on $\Theta$ with a constant $\mathbf{L}>0$, i.e.
$$\|  \nabla_\theta \mathcal{L}_L(\theta) -  \nabla_\theta \mathcal{L}_L(\tilde\theta)\|_2 \le \mathbf{L} \|\theta-\tilde\theta\|_2 \mbox{ for all }  \theta,\tilde\theta\in\Theta.$$
\end{theorem}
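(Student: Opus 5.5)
The plan is to prove L-smoothness of $\mathcal{L}_L(\theta)=\sum_\ell \beta_\ell|\varphi^\star(u_\ell)-\varphi_\theta(u_\ell)|^2$, where $\varphi_\theta(u)=\EE_G[e^{iuR_T^\theta}]$. Since the sum over $\ell$ is finite, with $|u_\ell|\le K$ and $\beta_\ell\ge 0$, it suffices to treat a single frequency $u$ and obtain constants that are uniform in $u\in[-K,K]$. We write
\[
\nabla_\theta\mathcal{L}_L(\theta)=2\sum_\ell\beta_\ell\,\mathrm{Re}\Big(\overline{(\varphi^\star(u_\ell)-\varphi_\theta(u_\ell))}\,\nabla_\theta\varphi_\theta(u_\ell)\Big),
\]
and observe that products of bounded Lipschitz maps are Lipschitz. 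The theorem then reduces to four facts, each holding uniformly in $u\in[-K,K]$ and $\theta\in\Theta$:
\begin{itemize}
\item[(a)] $|\varphi_\theta(u)|\le 1$;
\item[(b)] $\theta\mapsto\varphi_\theta(u)$ is Lipschitz;
\item[(c)] $\nabla_\theta\varphi_\theta(u)=\EE_G[iue^{iuR_T^\theta}\nabla_\theta R_T^\theta]$ is bounded;
\item[(d)] $\theta\mapsto\nabla_\theta\varphi_\theta(u)$ is Lipschitz.
\end{itemize}
Given these, we split $\bar a\,b-\bar{\tilde a}\,\tilde b=\overline{(a-\tilde a)}\,b+\bar{\tilde a}\,(b-\tilde b)$, which yields $\mathbf{L}$ explicitly.

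The core work is pathwise, on the good event $G$. There $|s_t|\le B_t^s$ and $|z_t|\le B^z$, and $\Theta$ is compact. Hence the network input $x_t=(s_t,R_t,z_t,t)$ and the action $a_t=f(\theta,x_t)$ stay in a bounded set, once we check inductively that $|R_t|$ is bounded through the Lipschitz bound on $r$. From the explicit form $f(\theta,x)=w^2\sigma(W^1x+b^1)+b^2$ and Assumption~\ref{as:1}, we get on bounded sets: $f$, $\nabla_\theta f$ and $\nabla_x f$ are bounded, and both are Lipschitz in $(\theta,x)$. This works because $\sigma'$ is Lipschitz, and products of bounded Lipschitz factors such as $w^2\sigma'(\cdot)x$ remain Lipschitz.

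We then prove by induction over $t$ (this is the lemma cited as Lemma~\ref{lem:LR}) that, on $G$, the maps $\theta\mapsto s_t^\theta,a_t^\theta,R_t^\theta$ are Lipschitz with deterministic constants. The step uses Assumption~\ref{ass:rFLip}, where $\varepsilon_{t+1}$ is shared between the two trajectories. Next, a second induction uses the chain rule
\[
\nabla_\theta s_{t+1}=\partial_sF\,\nabla_\theta s_t+\partial_aF\,\nabla_\theta a_t,\qquad \nabla_\theta a_t=\nabla_\theta f+\nabla_x f\,\nabla_\theta x_t,
\]
and similarly for $R_{t+1}$. This shows that $\nabla_\theta R_T^\theta$ is bounded and Lipschitz in $\theta$ on $G$, using Assumption~\ref{ass:rFLipgrad} together with the first induction. (Bounds on $\partial_sF,\partial_aF$ follow from Assumption~\ref{ass:rFLip}.) The points where $\sigma'$ fails to exist are isolated; I would treat them by noting that they form a null set in $z_t$ for fixed $\theta$, so pathwise a.e. differentiability suffices inside the expectation.

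Finally we transfer everything to expectations. Since $|e^{iuR}-e^{iu\tilde R}|\le|u||R-\tilde R|$, we obtain
\[
|\varphi_\theta(u)-\varphi_{\tilde\theta}(u)|\le K\,L_R\|\theta-\tilde\theta\|_2,
\]
which is (b). The map $\theta\mapsto iue^{iuR_T^\theta}\nabla_\theta R_T^\theta$ is a product of bounded Lipschitz factors on $G$, which gives (c) and (d) after taking $\EE_G$. Interchanging $\nabla_\theta$ and $\EE_G$ is justified by dominated convergence with the bounds on $G$. The main obstacle I expect is the second induction. One must track that the Jacobian products stay bounded and Lipschitz through $T$ steps, including the dependence of $\nabla_x f$ on the state, and verify that all constants depend only on $B_t^s$, $B^z$, $K$, $T$ and $\Theta$. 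Some care is also needed with the kinks of $\sigma'$ in the differentiation under the expectation.
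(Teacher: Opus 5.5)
Your proposal is correct and follows essentially the same route as the paper's proof. Both use pathwise bounds on the good event $G$, then two inductions giving Lipschitz continuity of $\theta\mapsto R_T^\theta$ and $\theta\mapsto\nabla_\theta R_T^\theta$ on $\Theta\times G$ (the paper's Lemmas~\ref{lem:LR} and~\ref{lem:LRgrad}), and finally the product rule for bounded Lipschitz factors applied to $\overline{(\varphi^\star-\varphi_\theta)}\,\nabla_\theta\varphi_\theta$ over the finitely many nodes $u_\ell$; the differences are cosmetic (the paper uses $\cos(u_\ell R_T^\theta)$ and $\sin(u_\ell R_T^\theta)$ instead of complex products, works in the $L^1$-norm, and passes to $\|\cdot\|_2$ with a factor $\sqrt{7}$).
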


Before we continue we need some estimates. Note that in the algorithm we have to simulate the $z$ and $\varepsilon$ random variables for each iteration step $k$, exactly $M$ times over the planning period $t=0,\ldots,T.$ Thus, we denote these random variables by $z_t^{k,j}$ and $\varepsilon_t^{k,j}$. In what follows we denote by $\cF_k = \sigma(\varepsilon_t^{1,j},\ldots,\varepsilon_t^{k,j},t=0,\ldots, T-1, j=1,\ldots,M) \vee \sigma(z_t^{1,j},\ldots ,z_t^{k,j}, t=1,\ldots,T, j=1,\ldots,M)$ the information available at iteration time $k$ of the algorithm, i.e.\ after $k$ simulations of the full state trajectory in the algorithm  and we denote by $\cF_k^G =\{ G\cap A: A\in \cF_k\}$ the trace $\sigma$-algebra on the good event.

\begin{lemma}\label{lem:bias}
    The bias of the estimator for the gradient of the target function in \eqref{eq:approxgradient} at iteration time $k$ is given by
\begin{align}
\label{eq:bias-decomposition}
\EE\!\big[g(\theta_k)\mid \cF_k^G\big]
=
\nabla_\theta \cL_{L}(\theta_k)
+\mathrm{Bias}_M(\theta_k),
\end{align}
where
\begin{align}
\label{eq:bias-term}
\mathrm{Bias}_M(\theta_k)
=
\frac{2}{M}\sum_{\ell=1}^{L}\beta_\ell\,
\text{Re}\Big(
iu_\ell\,\EE[\nabla_\theta R_T^{\theta_k}|\cF_k^G]
-\overline{\varphi_{\theta_k}(u_\ell)}\,\nabla_\theta \varphi_{\theta_k}(u_\ell)
\Big),
\end{align}
and it satisfies the bound ($B_\nabla^R$ is the constant from Lemma \ref{lem:bound1} )
\begin{equation}
\label{eq:bias-bound}
\big\|\mathrm{Bias}_M(\theta_k)\big\|_2
\;\le\;
\frac{\hat{S}}{M} , \quad \hat{S}:=4\,B_\nabla^R\Big(\sum_{\ell=1}^{L}|\beta_\ell u_\ell|\Big).
\end{equation}
Moreover, we have
\begin{equation}
\label{eq:square-bound} \EE\!\big[ \|g(\theta_k)\|_2^2\mid \cF_k^G\big]\le \hat{S}^2.
\end{equation}
\end{lemma}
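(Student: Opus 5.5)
Conditionally on $\cF_k^G$, the parameter $\theta_k$ is fixed and the $M$ fresh trajectories are i.i.d. copies of $(R_T^{\theta_k},\nabla_\theta R_T^{\theta_k})$ restricted to $G$. The plan is to expand the product of empirical means in $g(\theta_k)$ and separate the diagonal terms, which are the only source of bias, from the off-diagonal ones.

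\emph{Step 1: expand.} Write $X_j:=e^{iu_\ell R_T^{\theta_k,j}}$ and $Y_j:=iu_\ell X_j\nabla_\theta R_T^{\theta_k,j}$, so that $\hat\varphi=\frac1M\sum_j X_j$ and $\widehat{\nabla\varphi}=\frac1M\sum_j Y_j$. Then
\[
\overline{(\varphi^*-\hat\varphi)}\,\widehat{\nabla\varphi}=\overline{\varphi^*}\,\frac1M\sum_j Y_j-\frac1{M^2}\sum_{j,m}\overline{X_j}Y_m .
\]
\emph{Step 2: take conditional expectations.} The first term has conditional mean $\overline{\varphi^*}\,\nabla_\theta\varphi_{\theta_k}$. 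In the double sum, the $M(M-1)$ terms with $j\neq m$ factor by independence, each giving $\overline{\varphi_{\theta_k}}\nabla_\theta\varphi_{\theta_k}$. The $M$ diagonal terms use $\overline{X_j}Y_j=iu_\ell\nabla_\theta R_T^{\theta_k,j}$, since $|X_j|^2=1$.

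\emph{Step 3: identify the bias.} Collecting terms,
\[
\EE\big[\overline{(\varphi^*-\hat\varphi)}\,\widehat{\nabla\varphi}\,\big|\,\cF_k^G\big]=\overline{(\varphi^*-\varphi_{\theta_k})}\,\nabla\varphi_{\theta_k}+\frac1M\,\overline{\varphi_{\theta_k}}\,\nabla\varphi_{\theta_k}-\frac1M\, iu_\ell\,\EE\big[\nabla_\theta R_T^{\theta_k}\,\big|\,\cF_k^G\big].
\]
Multiplying by $2\beta_\ell$, taking real parts and summing over $\ell$ gives $\nabla_\theta\cL_L(\theta_k)$ plus a $1/M$ bias term. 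This reproduces \eqref{eq:bias-term}, and I would check the sign convention there against this computation. When the nodes use independent samples, as in Algorithm~\ref{alg:theoretical}, each $\ell$ is handled separately and the argument is unchanged.

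\emph{Step 4: bound the bias.} Use Lemma~\ref{lem:bound1}, which gives $\|\nabla_\theta R_T^\theta\|_2\le B_\nabla^R$ on $G$, together with $|\varphi_{\theta_k}|\le1$. Then $\|\nabla\varphi_{\theta_k}(u_\ell)\|\le|u_\ell|B_\nabla^R$, so each summand is bounded by $2|u_\ell|B_\nabla^R$. Multiplying by $2|\beta_\ell|/M$ and summing gives \eqref{eq:bias-bound}.

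\emph{Step 5: second-moment bound.} This is a deterministic bound on $G$. Since $|\varphi^*-\hat\varphi|\le2$ and $\|\widehat{\nabla\varphi}(u_\ell)\|\le|u_\ell|B_\nabla^R$, we get $\|g(\theta_k)\|_2\le\sum_\ell 2|\beta_\ell|\cdot2|u_\ell|B_\nabla^R=\hat S$ almost surely on $G$. Squaring and taking the conditional expectation gives \eqref{eq:square-bound}.

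\emph{Main obstacle.} The delicate step is Step 2. One has to make precise that, given $\cF_k^G$, the new samples are independent of $\theta_k$ and of one another on the good event. Conditioning on $G$ must not destroy the product structure, which holds if $G$ is taken as the product of per-trajectory good events. One also needs the exchange of $\nabla_\theta$ and $\EE$ in $\nabla\varphi_{\theta}=\EE[iue^{iuR}\nabla_\theta R]$; this follows by dominated convergence from the boundedness supplied by Lemma~\ref{lem:bound1}.
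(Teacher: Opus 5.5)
Your proposal is correct and follows the paper's own argument. Conditionally on $\cF_k^G$, the off-diagonal pairs factor into $\overline{\varphi_{\theta_k}}\,\nabla_\theta\varphi_{\theta_k}$, the $M$ diagonal terms reduce via $\overline{X_j}Y_j=iu_\ell\nabla_\theta R_T^{\theta_k,j}$, and both bounds follow from Lemma~\ref{lem:bound1}, $|\varphi_{\theta_k}|\le 1$ and $|\varphi^\star-\hat\varphi|\le 2$; the paper simply asserts the conditional i.i.d.\ structure that you flag as the main obstacle.

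On your sign check: \eqref{eq:approxgradient} as printed equals $-\nabla_\theta\hat\cL_L$, which is why your literal computation yields $-(\nabla_\theta\cL_L+\mathrm{Bias}_M)$. The paper's proof silently uses the correct form $2\sum_\ell\beta_\ell\,\mathrm{Re}\bigl(\overline{(\hat\varphi_\theta(u_\ell)-\varphi^\star(u_\ell))}\,\widehat{\nabla_\theta\varphi_\theta}(u_\ell)\bigr)$, with which your Step~3 gives \eqref{eq:bias-decomposition}--\eqref{eq:bias-term} exactly, and the norm bounds do not depend on the sign.
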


For the the main convergence result we need some further assumptions on the step sizes (which are the usual ones). 

\begin{assumption} \label{ass:ConvAlgo}
 Let $(\alpha_k), \alpha_k> 0$ be a sequence of step sizes such that $$\sum_{k=0}^\infty \alpha_k = \infty\quad \mbox{ and } \quad \sum_{k=0}^\infty \alpha_k^2 <\infty. $$
\end{assumption}
In what follows we denote $A_K := \sum_{k=0}^{K-1} \alpha_k$ and $\hat{S}$ as in the previous lemma.

\begin{theorem}[Convergence to a stationary point]
\label{thm:conv-discrete} Under Assumptions 1-6 we obtain for the iterated sequence of parameters $(\theta_k)$ from our algorithm given by $\theta_{k+1} \leftarrow \theta_k - \alpha_k \, g(\theta_k)$ that 
\begin{eqnarray}\label{eq:prob-bound-good}
\sum_{k=0}^{K-1}\frac{\alpha_k}{A_K}  \EE_G\Big[\|\nabla_\theta \cL_L(\theta_k)\|_2^2 \Big] &=& \frac{2}{A_K} \cL_L(\theta_0)+ \mathbf{L} \hat{S}^2  \frac{\sum_{k=0}^{K-1} \alpha_k^2}{A_K} + \frac{\hat{S}^2}{M^2}   \\
&=&O\Big(\frac{1}{A_K}\Big)+O\Big(\frac{1}{M^2}\Big)  \mbox{ a.s. for }K,M\to \infty,
\end{eqnarray}
\end{theorem}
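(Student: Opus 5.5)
The plan is the standard descent-lemma argument for nonconvex stochastic gradient methods. By Theorem \ref{lem:Lipschitz_gradient}, $\cL_L$ is $\mathbf{L}$-smooth on $\Theta$. Hence for every $k$
\begin{equation*}
\cL_L(\theta_{k+1}) \le \cL_L(\theta_k) - \alpha_k \langle \nabla_\theta \cL_L(\theta_k), g(\theta_k)\rangle + \frac{\mathbf{L}\alpha_k^2}{2}\|g(\theta_k)\|_2^2 .
\end{equation*}
For this to apply literally, I treat the iterates as staying in $\Theta$, or else project onto the compact set $\Theta$ and use nonexpansiveness of the projection; I will note this as a technical point. Next I take conditional expectations with respect to $\cF_k^G$. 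Here $\theta_k$ is measurable with respect to the information before step $k$, and the fresh samples at step $k$ are drawn on $G$. Lemma \ref{lem:bias} then gives
\begin{equation*}
\EE[g(\theta_k)\mid\cF_k^G]=\nabla_\theta\cL_L(\theta_k)+\mathrm{Bias}_M(\theta_k),
\qquad
\EE[\|g(\theta_k)\|_2^2\mid\cF_k^G]\le \hat S^2 .
\end{equation*}

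The key step is handling the cross term with the bias. I write
\begin{equation*}
-\alpha_k\langle\nabla\cL_L,\nabla\cL_L+\mathrm{Bias}_M\rangle \le -\alpha_k\|\nabla\cL_L\|^2+\frac{\alpha_k}{2}\|\nabla\cL_L\|^2+\frac{\alpha_k}{2}\|\mathrm{Bias}_M\|^2 ,
\end{equation*}
which is Young's inequality. Combined with \eqref{eq:bias-bound}, this yields
\begin{equation*}
\EE_G[\cL_L(\theta_{k+1})] \le \EE_G[\cL_L(\theta_k)] - \frac{\alpha_k}{2}\EE_G\|\nabla\cL_L(\theta_k)\|_2^2 + \frac{\alpha_k}{2}\frac{\hat S^2}{M^2} + \frac{\mathbf{L}\alpha_k^2}{2}\hat S^2 .
\end{equation*}
This is where the factors $2$, $\mathbf{L}\hat S^2$ and $\hat S^2/M^2$ in \eqref{eq:prob-bound-good} come from. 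I expect this step to be the main obstacle, because it requires a rigorous justification of the tower property on the trace $\sigma$-algebra $\cF_k^G$ and a check that the bias bound holds uniformly in $\theta_k$. The latter follows from the uniform constant $B_\nabla^R$ of Lemma \ref{lem:bound1} on $G$.

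I then sum from $k=0$ to $K-1$ so that the left side telescopes. Since $\cL_L\ge 0$, dropping $\EE_G[\cL_L(\theta_K)]$ leaves
\begin{equation*}
\frac12\sum_{k=0}^{K-1}\alpha_k\EE_G\|\nabla\cL_L(\theta_k)\|_2^2 \le \cL_L(\theta_0)+\frac{\mathbf{L}\hat S^2}{2}\sum_{k}\alpha_k^2+\frac{\hat S^2}{2M^2}A_K .
\end{equation*}
Multiplying by $2/A_K$ gives the stated bound. The equality sign in the theorem should read as $\le$. Finally, Assumption \ref{ass:ConvAlgo} gives $\sum_k\alpha_k^2<\infty$ and $A_K\to\infty$, so the first two terms are $O(1/A_K)$ and the last is $O(1/M^2)$, which is the asymptotic claim. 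Note that $\cL_L(\theta_0)\le 4\sum_\ell\beta_\ell$ is bounded.
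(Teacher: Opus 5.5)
Your proposal is correct and follows the paper's proof essentially step for step: the descent inequality from $\mathbf{L}$-smoothness, conditioning on $\cF_k^G$, Young's inequality on the bias cross term combined with Lemma~\ref{lem:bias}, telescoping, and division by $A_K$. You are also right that the equality sign in the statement should be $\le$.

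One caveat concerns your side remark about projecting onto $\Theta$: nonexpansiveness alone does not cover it, since then $\theta_{k+1}-\theta_k\neq-\alpha_k g(\theta_k)$ and one can in general only control a gradient mapping rather than $\|\nabla_\theta\cL_L(\theta_k)\|_2^2$. So the argument, like the paper's, genuinely relies on the iterates remaining in $\Theta$.
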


Thus, when we increase the number of iterations and the number of samples for the empirical gradient, then the average expected gradient tends to zero and thus the parametrization of the policy via $\theta_k$ to a stationary point. Note that for $\alpha_k=1/k$ we have $A_K=O(\ln(K))$ i.e. for the convergence it is more important to increase the number of iterations than the number of samples per gradient. See \cite{Liu2023HighProbSGD} for a discussion on how to improve convergence rates. \\

\begin{proof}
Fix $k$ and work conditionally on $\cF_k^G$ so that $\theta_k$ is deterministic.
By $\mathbf{L}$--smoothness of $\cL_{L}$,
for $\theta_{k+1}=\theta_k-\alpha_k g(\theta_k)$ we have the standard descent inequality
\begin{equation}
\label{eq:smooth-descent}
\cL_{L}(\theta_{k+1})
\le
\cL_{L}(\theta_k)
-\alpha_k\big\langle\nabla \cL_{L}(\theta_k),\, g(\theta_k)\big\rangle
+\frac{\mathbf{L}}{2}\alpha_k^2\|g(\theta_k)\|_2^2.
\end{equation}
Taking conditional expectation $\EE[\cdot\mid \cF_k^G]$ and using linearity yields
\begin{align}
\label{eq:cond-step}
\EE\!\big[\cL_{L}(\theta_{k+1})\mid \cF_k^G\big]
&\le
\cL_{L}(\theta_k)
-\alpha_k\Big\langle\nabla \cL_{L}(\theta_k),\,\EE[g(\theta_k)\mid \cF_k^G]\Big\rangle
+\frac{\mathbf{L}}{2}\alpha_k^2\EE\!\big[\|g(\theta_k)\|_2^2\mid \cF_k^G\big].
\end{align}

Define the conditional bias vector
\[
b_k
:=
\EE[g(\theta_k)\mid \cF_k^G]-\nabla \cL_{L}(\theta_k).
\]
Then
\[
\Big\langle\nabla \cL_{L}(\theta_k),\,\EE[g(\theta_k)\mid \cF_k^G]\Big\rangle
=
\|\nabla \cL_{L}(\theta_k)\|_2^2
+
\big\langle\nabla \cL_{L}(\theta_k),\, b_k\big\rangle.
\]

Apply Young's inequality $|\langle u,v\rangle|\le \tfrac12\|u\|_2^2+\tfrac12\|v\|_2^2$ to obtain
\[
-\alpha_k\big\langle\nabla \cL_{L}(\theta_k),\, b_k\big\rangle
\le
\frac{\alpha_k}{2}\|\nabla \cL_{L}(\theta_k)\|_2^2
+\frac{\alpha_k}{2}\|b_k\|_2^2.
\]
Plugging this into \eqref{eq:cond-step} gives the one-step inequality
\begin{align}
\label{eq:one-step-final}
\EE\!\big[\cL_{L}(\theta_{k+1})\mid \cF_k^G\big]
&\le
\cL_{L}(\theta_k)
-\frac{\alpha_k}{2}\|\nabla \cL_{L}(\theta_k)\|_2^2
+\frac{\alpha_k}{2}\|b_k\|_2^2
+\frac{\mathbf{L}}{2}\alpha_k^2\EE\!\big[\|g(\theta_k)\|_2^2\mid \cF_k^G\big].
\end{align}

Now use Lemma \ref{lem:bias} to obtain
\[
\frac{\alpha_k}{2}\|b_k\|_2^2
\le
\frac{\alpha_k}{2}\cdot \frac{\hat{S}^2}{M^2}, \quad 
\frac{\mathbf{L}}{2}\alpha_k^2\EE\!\big[\|g(\theta_k)\|_2^2\mid \cF_k^G\big]
\le
\frac{\mathbf{L}}{2} \, \alpha_k^2 \hat{S}^2.
\]
Substitute these into \eqref{eq:one-step-final}, and then take $\EE_G[\cdot]$ to remove the conditioning on $\cF_k$ and rearrange the equation:
\begin{align}
\label{eq:avg-bound-pre}
\frac{\alpha_k}{2}\EE_G\!\Big[\|\nabla \cL_{L}(\theta_k)\|_2^2\ \Big]
&\le
\EE_G\!\Big[\cL_{L}(\theta_k)-\cL_{L}(\theta_{k+1})\ \Big]
+\frac{\mathbf{L}}{2}  \, \alpha_k^2 \hat{S}^2
+\frac{\alpha_k}{2}\cdot \frac{\hat{S}^2}{M^2}.
\end{align}

Sum \eqref{eq:avg-bound-pre} from $k=0$ to $K-1$. The loss term telescopes and $\cL_{L}\ge 0$ gives
\begin{align}
\label{eq:sum-bound}
\frac12\sum_{k=0}^{K-1}\alpha_k\EE_G\!\Big[\|\nabla \cL_{L}(\theta_k)\|_2^2\ \Big]
&\le
\cL_{L}(\theta_0)+ \frac{\mathbf{L}}{2}  \,  \hat{S}^2 \,\sum_{k=0}^{K-1}\alpha_k^2+ \frac{\hat{S}^2}{2 M^2} A_K.
\end{align}
Divide by $A_K$ to obtain the statement.
\end{proof}

\section{Applications}\label{sec:appl}
We implemented directly the Algorithm \ref{alg:theoretical} with the following set of parameters for discretization of the Fourier domain with a uniform grid with nodes $ u_\ell \;=\; -K + (\ell-1)\,\Delta u$ with $
\Delta u \;=\; \frac{2K}{L}$, for $ \ell=1,\dots,L$. For the smoothing weight we choose Gaussian $w(u)\;=\;e^{-\alpha u^{2}}$ and $\alpha$ is a smoothing parameter (usually around $0.05$ in our examples). 

The policy $(\pi_t)$ is implemented as a feed-forward neural network $f(\theta,s_t,R_t,z_t,t) \sim \pi_t(\cdot|s_t,R_t)$
receiving the four inputs $(s,R,z,t)\in\mathbb{R}^4$.
The inputs are first concatenated and passed through a linear layer of width
$P=256$, followed by layer normalization and a ReLU activation.
This is followed by $K=4$ residual blocks, each consisting of a linear map
$\mathbb{R}^d\!\to\!\mathbb{R}^d$ with layer normalization and ReLU, and a skip
connection.  A final linear layer maps the resulting hidden state to a scalar
output.  The output layer is initialized to zero so that
the initial action distribution is unbiased. No activation function is applied to the final layer. 

For all the examples we run the code until we obtain a loss of order $10^{-3}$, since the algorithm is stochastic and the problem nonconvex, the exact number of steps can vary a lot from run to run, this is especially true for the first example, while the others because of their structure are faster and converge in all runs. Whenever the loss is not below the threshold for $1000$ iterations we rerun the algorithm.   

The exact code for all numerical examples can be found \href{https://github.com/ThanosVasileiadis/Markov-Decision-Processes-of-the-Third-Kind-Learning-Distributions-by-Policy-Gradient-Descent}{\emph{here}} \footnote{https://github.com/ThanosVasileiadis/Markov-Decision-Processes-of-the-Third-Kind-Learning-Distributions-by-Policy-Gradient-Descent} with the exact parameter values. To run our examples we use the HAICORE server, courtesy of the Helmholtz Association. Smaller values of the parameters could run on an M1 Macbook air with 16GB of RAM. 

\subsection{Linear Quadratic Control Problem}
We consider a discrete--time stochastic control problem over a finite horizon \(T\in\mathbb{N}\).
The scalar state \(s_t\) evolves under real--valued actions \(a_t\) from a compact set according to
\begin{equation}\label{eq:dynamics}
  s_{t+1} \;=\; s_t + a_t + \sigma_\varepsilon\,\varepsilon_{t+1}, 
  \qquad t=0,1,\dots,T-1,
\end{equation}
where \(\sigma_\varepsilon=0.1\) is fixed and \((\varepsilon_t)\) are i.i.d.\  standard normal random variables, independent of the initial condition \(s_0=0\).  At each step, a one-stage reward (negative cost) is incurred,
\begin{equation}\label{eq:stage-cost}
  r(s_t,a_t) \;=\; -\tfrac12\bigl(s_t^2 + a_t^2\bigr),
\end{equation}
and the cumulative reward is updated as
\begin{equation}\label{eq:cumulative-cost}
  R_0 := 0,
  \qquad 
  R_{t+1} := R_t + r(s_t,a_t),
  \qquad 
  R_T = \sum_{t=0}^{T-1} r(s_t,a_t) .
\end{equation}
Note that our Assumptions 1,2 and 3 are satisfied and Assumption 4,5 are satisfied on the good event which is sufficient for our considerations. For example for Assumption \ref{ass:rFLip} (i) we have on the good event $G$, $|s_t^2-\tilde s_t^2|\le |s_t-\tilde s_t| 2 B^s_t$.
We are now interested in the distribution of $R_T$. We want to match it with a given distribution. Thus, we consider the lifted MDP:
At time \(t\), the controller observes the current state \(s_t\), the cumulative reward \(R_t\), and the normalized time--to--go
\(\tau_t := (T-t)/T\) which we use instead of $t$ for the practical implementation.
A  randomized Markov policy is a sequence \(\pi=(\pi_t)_{t=0}^{T-1}\) with stochastic kernels
\(\pi_t(\cdot\,|\,s_t,R_t)\) on \(\mathbb{R}\), parametrized by $f(\theta,s,R,z,\tau)$. The distribution of $z$ is a standard Gaussian. 

\paragraph{Target Distribution}
In order to test the algorithm we choose as a target characteristic function one which can be attained by a deterministic policy. More precisely, we choose the optimal LQ feedback control $a_t= -0.5 s_t$ and generate for the target characteristic function $M=100\times 1024$ trajectories \((R_T^{m})_{m=1}^M\) under the dynamics
\eqref{eq:dynamics}--\eqref{eq:cumulative-cost} for $T=10$ and define its empirical characteristic function as
\begin{equation}\label{eq:empirical-policy}
  \widehat{\phi}^*(u)
  \;=\;
  \frac{1}{M}\sum_{m=1}^{M}
  e^{\,iu R_T^{m}} .
\end{equation}
The control objective is to choose \(\pi\) so that the empirical law of \(R_T\) matches the target distribution in the frequency domain as formulated in \eqref{eq: parametrised Loss}.

\paragraph{Numerical Results and Discussion}


The policy gradient algorithm is capable of matching the distribution of cumulative results to the target. The \emph{loss is 0.000384} 
In Figure \ref{fig:ex1histogram} (a) we see the discrete density of the target versus the learned distribution. In (b) we compare the real and imaginary part of the target and learned characteristic function. 
   
As already explained there is an infinity of optimal action distribution and different runs of the algorithm result in different optimal action distributions as demonstrated by the Figure \ref{fig:different_action_distr}. In this figure we plot the action distribution from our parametrization for different state values $s$ and fixed $R$ and $\tau.$ In most cases the action densities are peaked which means they are close to deterministic.

We emphasise that in accordance with our main Theorem \ref{thm:conv-discrete} our policy gradient algorithm converges to a stationary point and thus it can get stuck in local minima, the results reported above concern only cases where we indeed successfully minimize the loss up to a threshold. There are cases where the algorithm doesn't achieve the minimum loss threshold and gets stuck. We will come back to this point in future work to "unstuck" the algorithm from local minima. 

Finally, the number of trajectories $M=100\times 1024$, seems sufficient to eliminate the bias from the estimator and stabilize the descent, even if it converges to a local minimum.  

\begin{figure}[H]
    \centering
    \begin{subfigure}[b]{0.55\textwidth}
        \includegraphics[width=\textwidth]{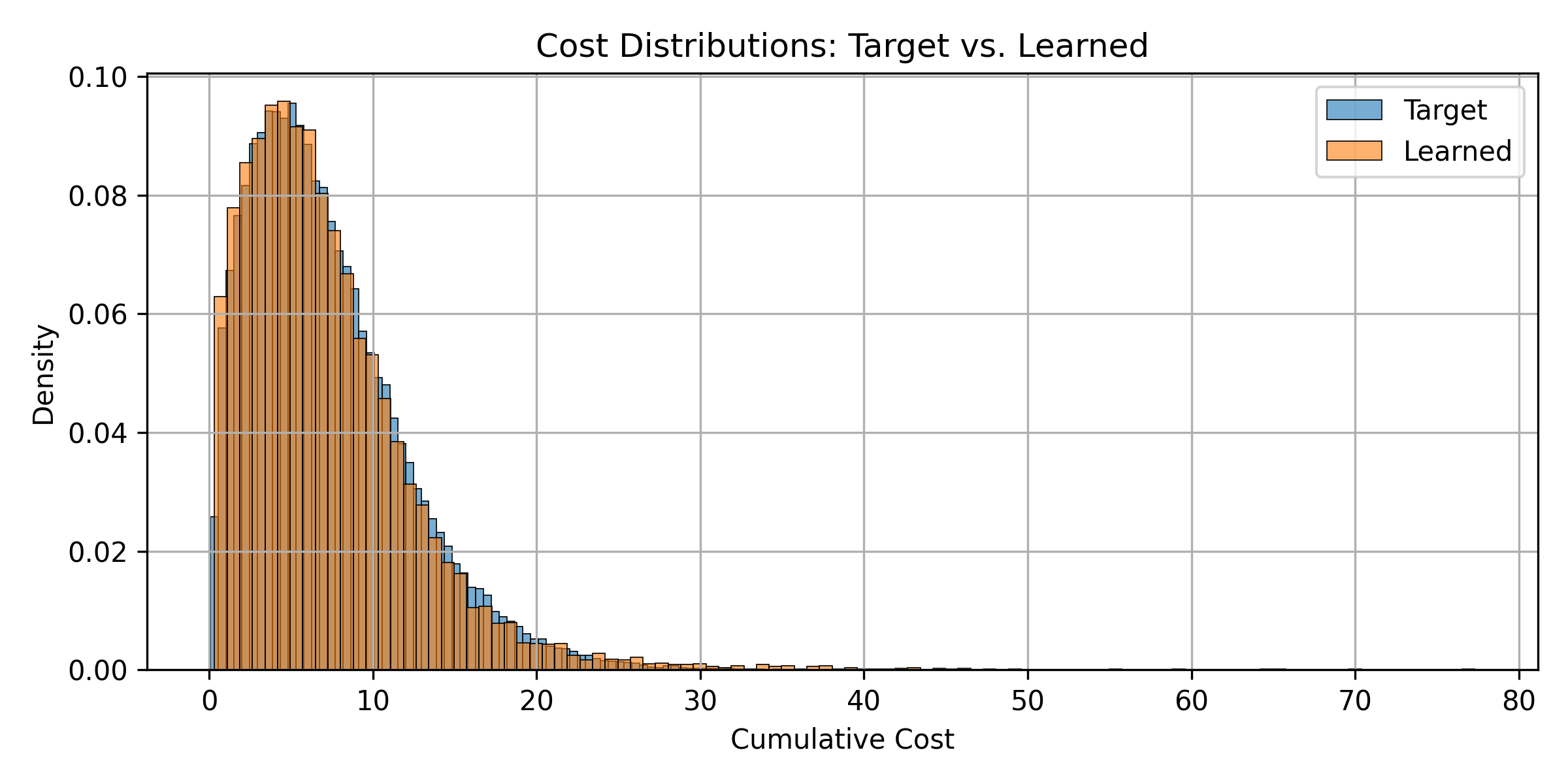}
        \caption{Histogram}
        \label{fig:return_dist2}
    \end{subfigure}
    ~ 
    \begin{subfigure}[b]{0.4\textwidth}
        \includegraphics[width=\textwidth]{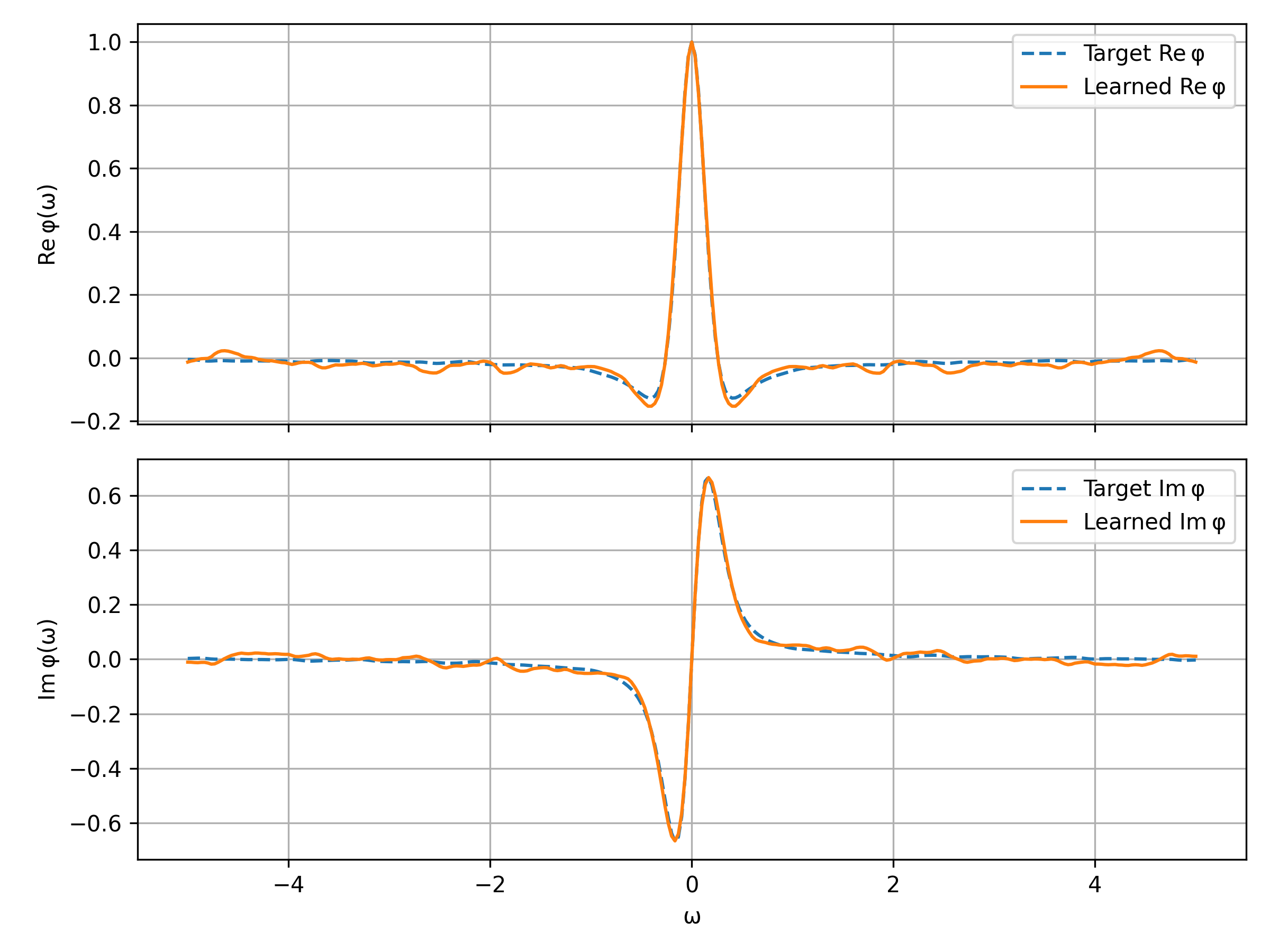}
        \caption{Characteristic Function }
        \label{fig:return_cf2}
    \end{subfigure}
    \caption{Distribution of Rewards}
     \label{fig:ex1histogram}
\end{figure}

\begin{figure}[H]
    \centering
    \begin{subfigure}[b]{0.45\textwidth}
        \includegraphics[width=\textwidth]{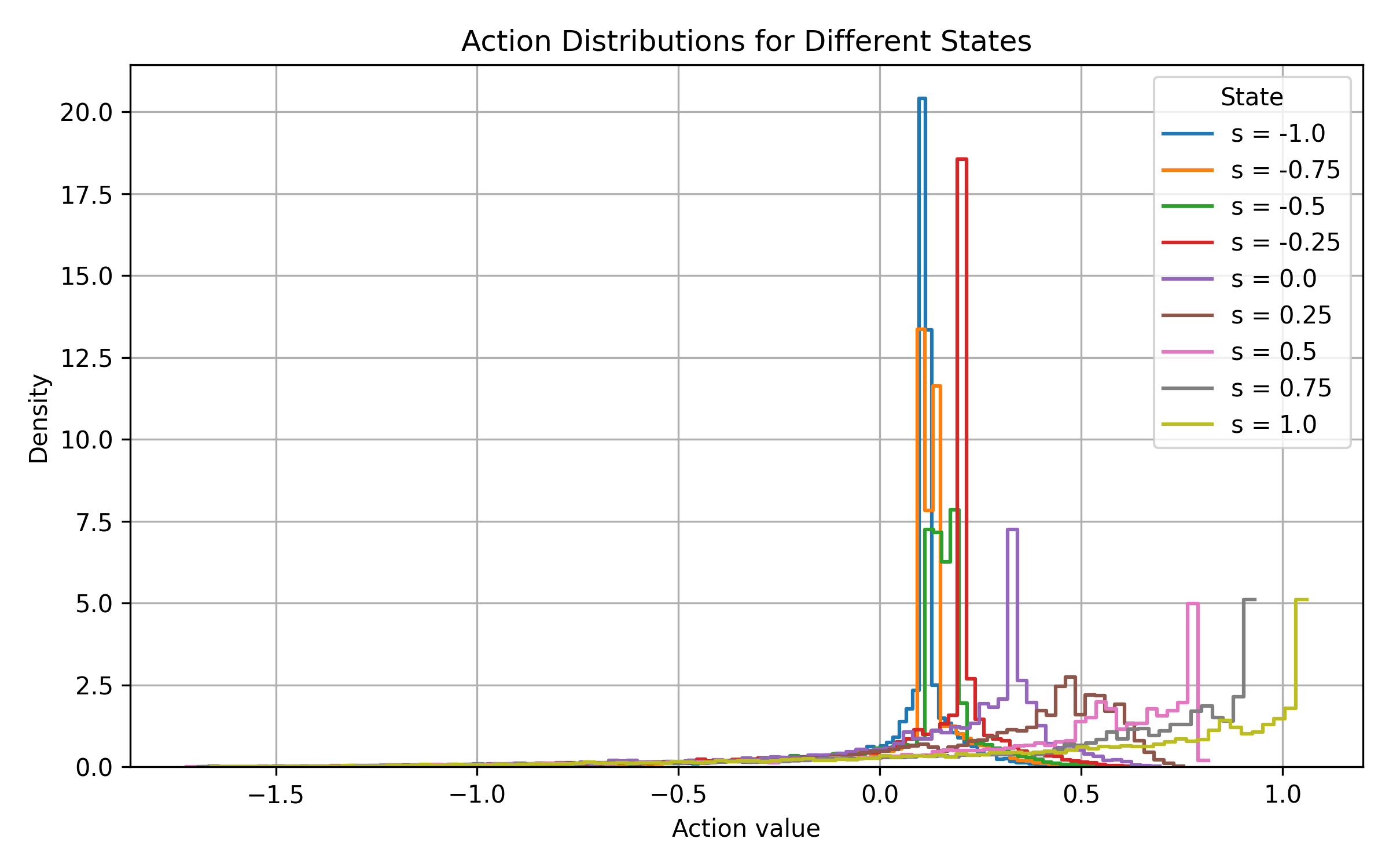}
        \caption{An optimal action distribution}
        \label{fig:return_dist2-2}
    \end{subfigure}
    \begin{subfigure}[b]{0.45\textwidth}
        \includegraphics[width=\textwidth]{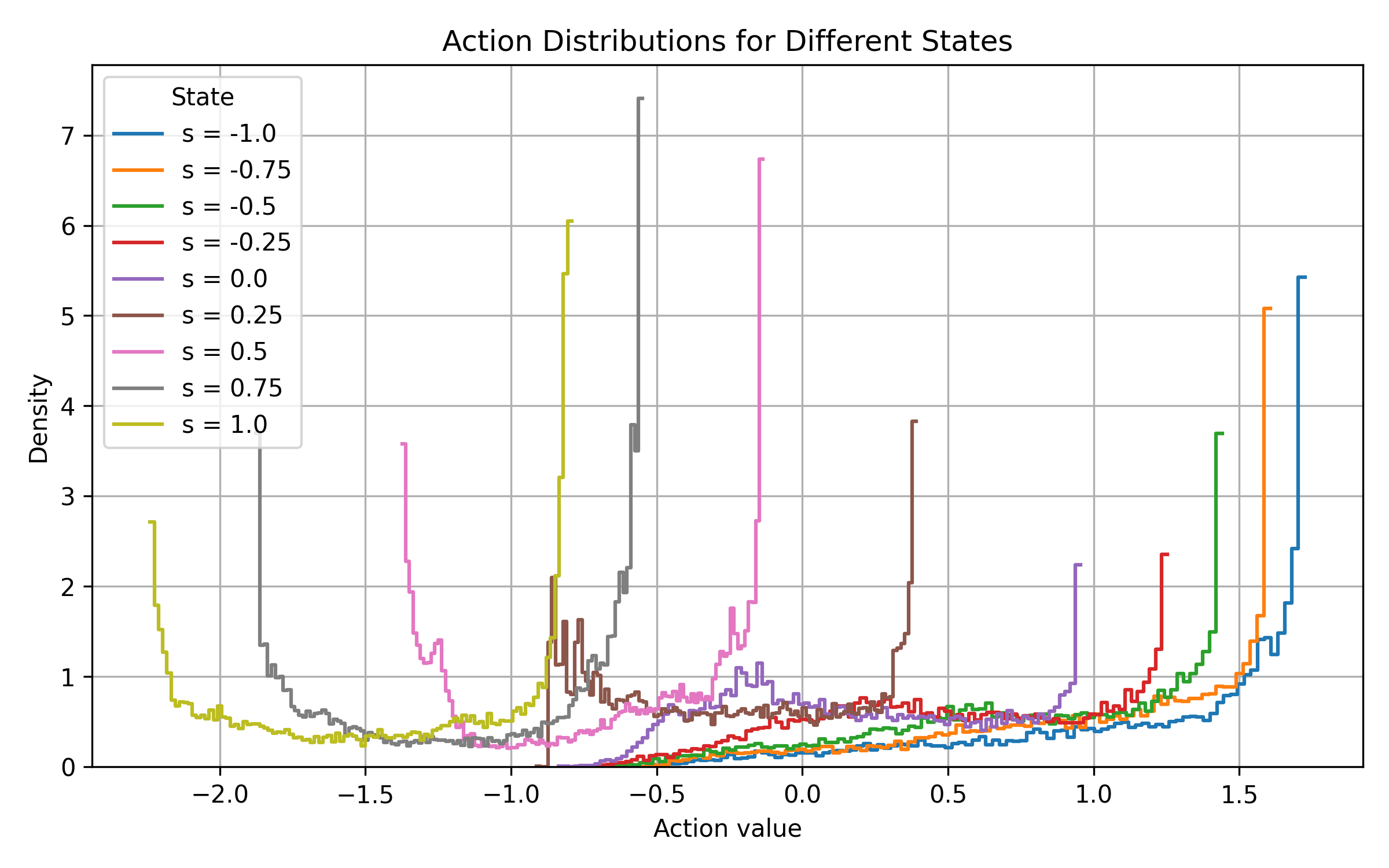}
        \caption{Another optimal action distribution}
    \end{subfigure}
    \caption{Two different optimal action distributions}
    \label{fig:different_action_distr}
\end{figure}

\subsection{Targeting Wealth Distributions by Investment}
\label{subse:wealth_example}

We consider a simple discretized Black Scholes model with length of time interval $\Delta t$, i.e. the bond price evolution is given by $B_t=e^{r t \Delta t}$ for $r>0$ and $t\in\NN_0$ and the stock price is given by $x_0=1$ and 
$$ x_{t+1}=x_t \cdot \exp\Big((\mu-\frac12 \sigma^2)\Delta t+\sigma \sqrt{\Delta t} \;\varepsilon_{t+1}\Big)$$
for $\mu\in \mathbb{R}$ and $\sigma>0$ and i.i.d.\ standard normal $(\varepsilon_t)$. If $a_t$ is the amount of money invested in the stock at time $t$ and it is decomposed as $a_t=\pi_t \;s_t$, where $\pi_t\in[0,1]$ is the percentage of our wealth $s_t$ invested in the stock. Then the wealth $s_{t+1}$ at time $t+1$  is given as follows:
$$ s_{t+1}= F(s_t,a_t,\varepsilon_{t+1})  = e^{r  \Delta t} \big( s_t+ a_t y_{t+1}\big)$$
with $$y_{t+1} = e^{-r  \Delta t} \exp\Big((\mu-\frac12 \sigma^2)\Delta t+\sigma \sqrt{\Delta t} \;\varepsilon_{t+1}\Big)-1, \qquad \varepsilon_{t+1} \sim N(0,1),$$
being the excess return over the bond. 
As in the motivating example we consider the reward to be non-stationary and set $r_t(s,a)=0$ for $t=0,\ldots,T$ and $r_{T+1}(s,a)=s.$ Thus we are interested in the distribution of the terminal wealth $s_T$ which  is given by 
\begin{equation}
	\label{eq:terminal_wealth}
s_T
= e^{r T \Delta t}
  \Biggl(
        s_0 
      + \sum_{t=1}^{T} 
            a_t \, y_t 
                \exp\!\bigl(-r (t-1)\Delta t \bigr)
  \Biggr).
\end{equation}
Note that when we choose actions from a compact set (which is not really a restriction for applied problems) then Assumptions 1-5 are satisfied on the good event $G$.

\paragraph{Target Distribution}

For our target distribution, as in the LQ Example, we generate terminal-wealth samples from \eqref{eq:terminal_wealth} under fixed, known investment policies.
We distinguish the following two benchmark scenarios.

\begin{enumerate}
	
	\item \textbf{100\% investment in the stock ($\pi_t=1$).}  
	Here $a_t=s_t$ for all $t$, so the entire wealth is invested in the risky asset at every time step.
	The wealth process follows a multiplicative recursion and can be written as
	\[
	s_T
	= s_0 \prod_{t=1}^T \exp\!\bigl((\mu-\tfrac12\sigma^2)\Delta t
	+ \sigma \sqrt{\Delta t}\,\varepsilon_t \bigr).
	\]
	Consequently, $\log s_T$ is normally distributed and the terminal wealth $s_T$ follows a log-normal distribution,
	\[
	s_T \sim \mathrm{LN}\!\Bigl(
	\log s_0 + T(\mu-\tfrac12\sigma^2)\Delta t,
	\; T\sigma^2\Delta t
	\Bigr).
	\]

	\item \textbf{Random investment fraction ($\pi_t\sim U(0,1)$).}  
	In this case, the investment proportions $\pi_t$ are sampled independently from the uniform distribution on $[0,1]$ and are independent of the market noise $\varepsilon_t$.
	
	The induced distribution of $s_T$ has no closed-form expression; however, we can approximate	this distribution  numerically via Monte--Carlo simulation. 
\end{enumerate}
  
\begin{remark}
\label{re:terminal_wealth}
    Because the terminal wealth depends linearly on the control and multiplicatively on market noise, so randomizing the investment fraction adds negligible variance compared to market fluctuations, making a uniform policy and its mean indistinguishable at the level of the terminal distribution.
\end{remark}


We discretize the horizon $T=1$ into $h=20$ rebalancing dates, hence $\Delta t = T/h$=0.05.
For the Black--Scholes dynamics we use initial wealth $s_0=100$, risk-free rate $r=0.02$, stock drift $\mu=0.06$ and volatility $\sigma=0.40$.
In each epoch, we simulate a Monte--Carlo batch of $M=10^5$ independent trajectories by sampling i.i.d.\ standard Gaussians
$\varepsilon_{t+1}\sim\mathcal N(0,1)$ and iterating the recursion for wealth under the current policy.


\paragraph{Numerical Results and Discussion} 

For the first  case we see in Figure \ref{fig:wealth_dist_case_2} and \ref{fig:cf_dist_wealth_case_2}  the histogram of the wealth distributions (target and learned) as well as the real and imaginary part of the target and learned characteristic functions. Both plots show good fits.  From Figure  
\ref{fig:optimal_action_dist_wealth_case_2} it is clear that the optimal action distribution produced by the algorithm is clearly deterministic and close to the theoretical one. 

For the second case, we plot first in Figure \ref{fig:wealth_dist_case_3} the comparison of the target density (blue) and the density of the terminal wealth learned by the algorithm (red). In Figure \ref{fig:cf_dist_wealth_case_3} we compare the real and imaginary part of the target and learned characteristic functions. Both plots show a good fit.
The optimal action distribution shown in Figure \ref{fig:optimal_action_dist_wealth_case_3} is as expected by Remark \ref{re:terminal_wealth} close to the mean of the uniform $(0,1)$ and almost deterministic, i.e. with a very small variance. This discrepancy could be due to precision error of the calculation. We investigate further the accuracy of the approximation in the next example where we have an exact analytically traceable unique distribution for the optimal controls.    

\textbf{Case 1) $\pi_t=1$}\\

\begin{figure}[H]
    \centering
    \begin{subfigure}[b]{0.45\textwidth}
        \includegraphics[width=\textwidth]{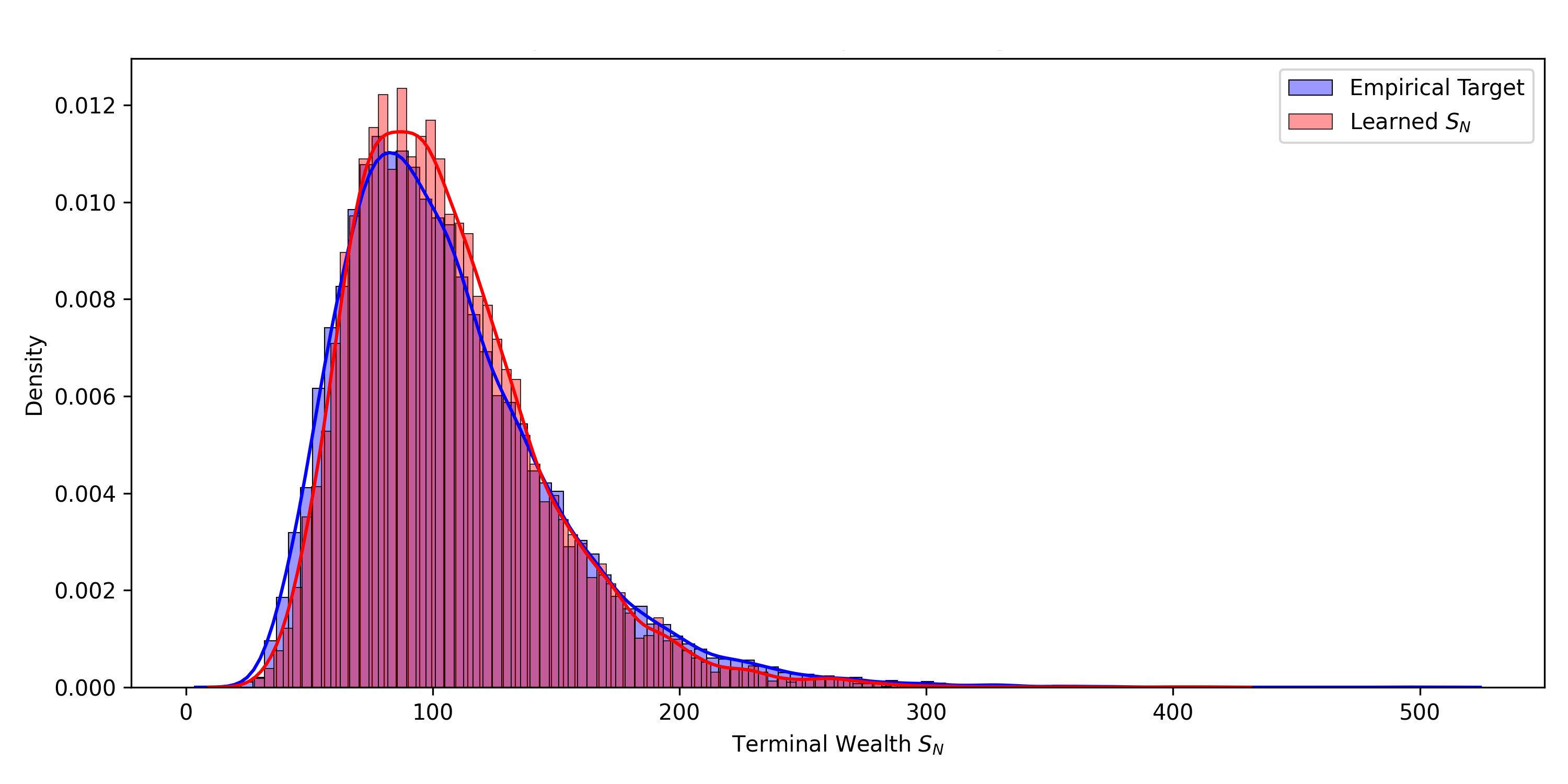}
        \caption{Histogram}
        \label{fig:wealth_dist_case_2}
    \end{subfigure}
    ~ 
    \begin{subfigure}[b]{0.4\textwidth}
        \includegraphics[width=\textwidth]{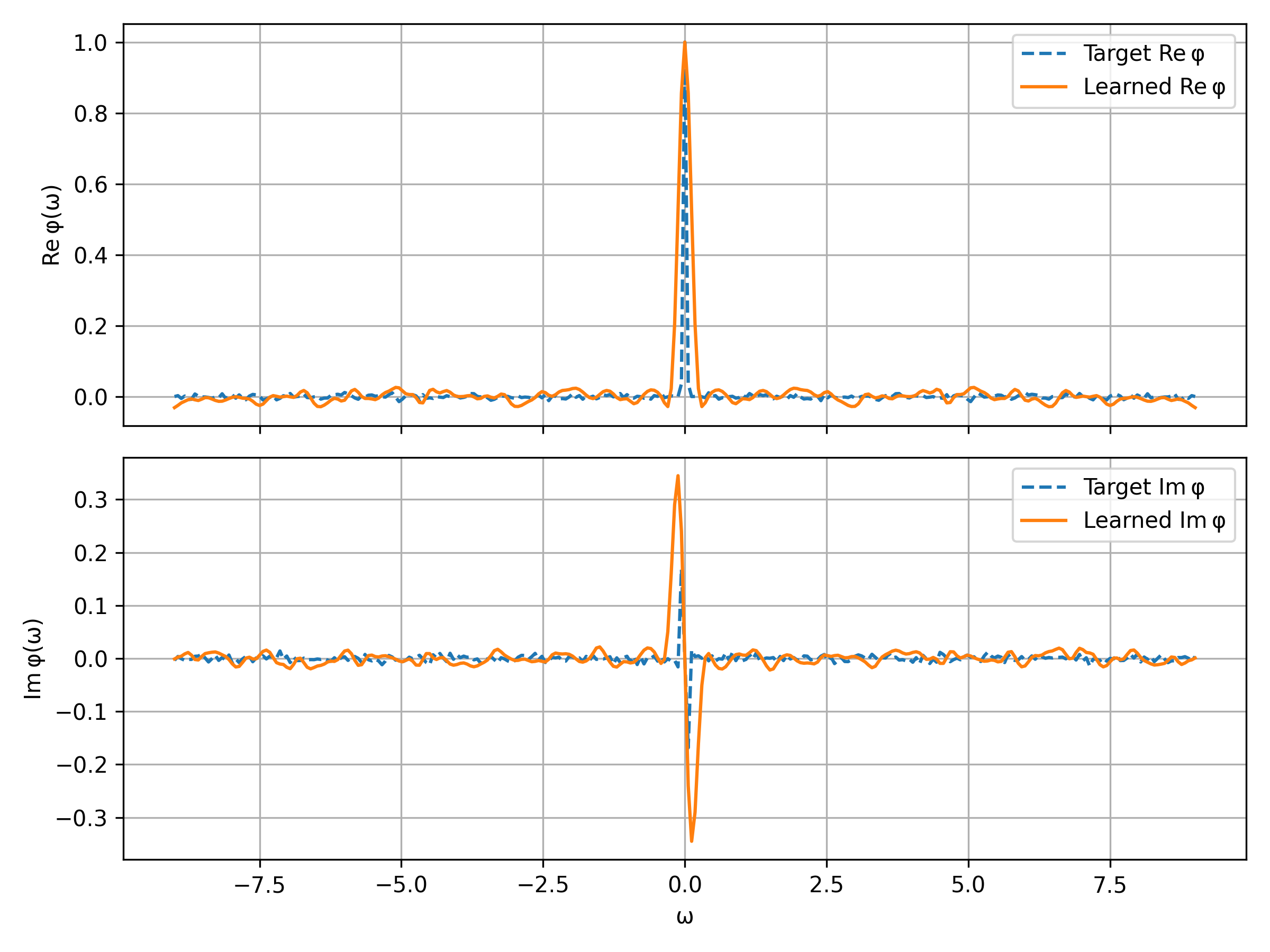}
        \caption{Characteristic Functions}
        \label{fig:cf_dist_wealth_case_2}
    \end{subfigure}
    \caption{Distribution of terminal wealth}
    \end{figure}

\begin{figure}[H]
	\centering
	\includegraphics[width=0.5\textwidth]{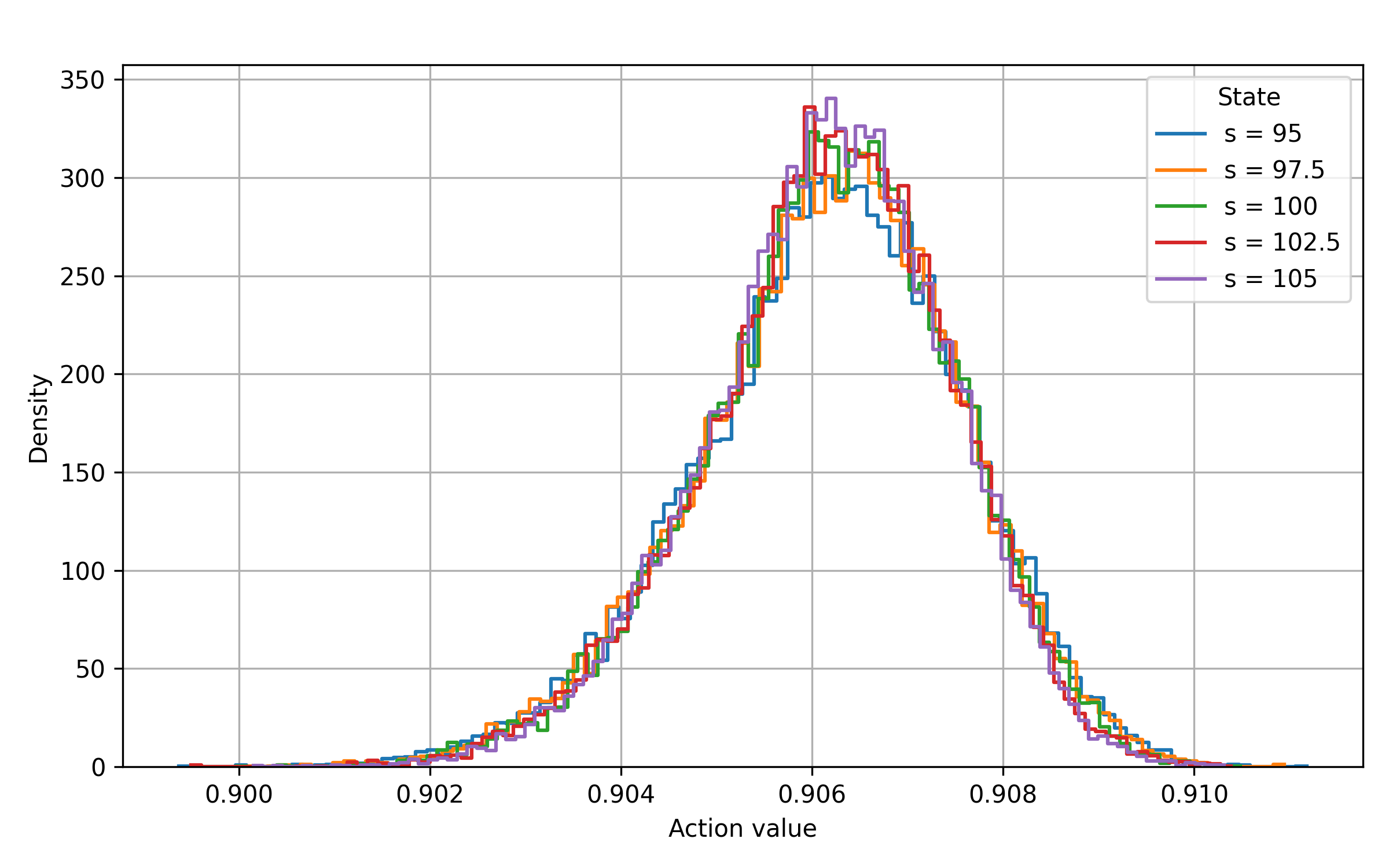}
	\caption{Optimal actions distributions for Different wealth states}
	\label{fig:optimal_action_dist_wealth_case_2}
\end{figure}

\textbf{Case 2) $\pi_t\sim U(0,1) $}\\

\begin{figure}[H]
    \centering
    \begin{subfigure}[b]{0.45\textwidth}
        \includegraphics[width=\textwidth]{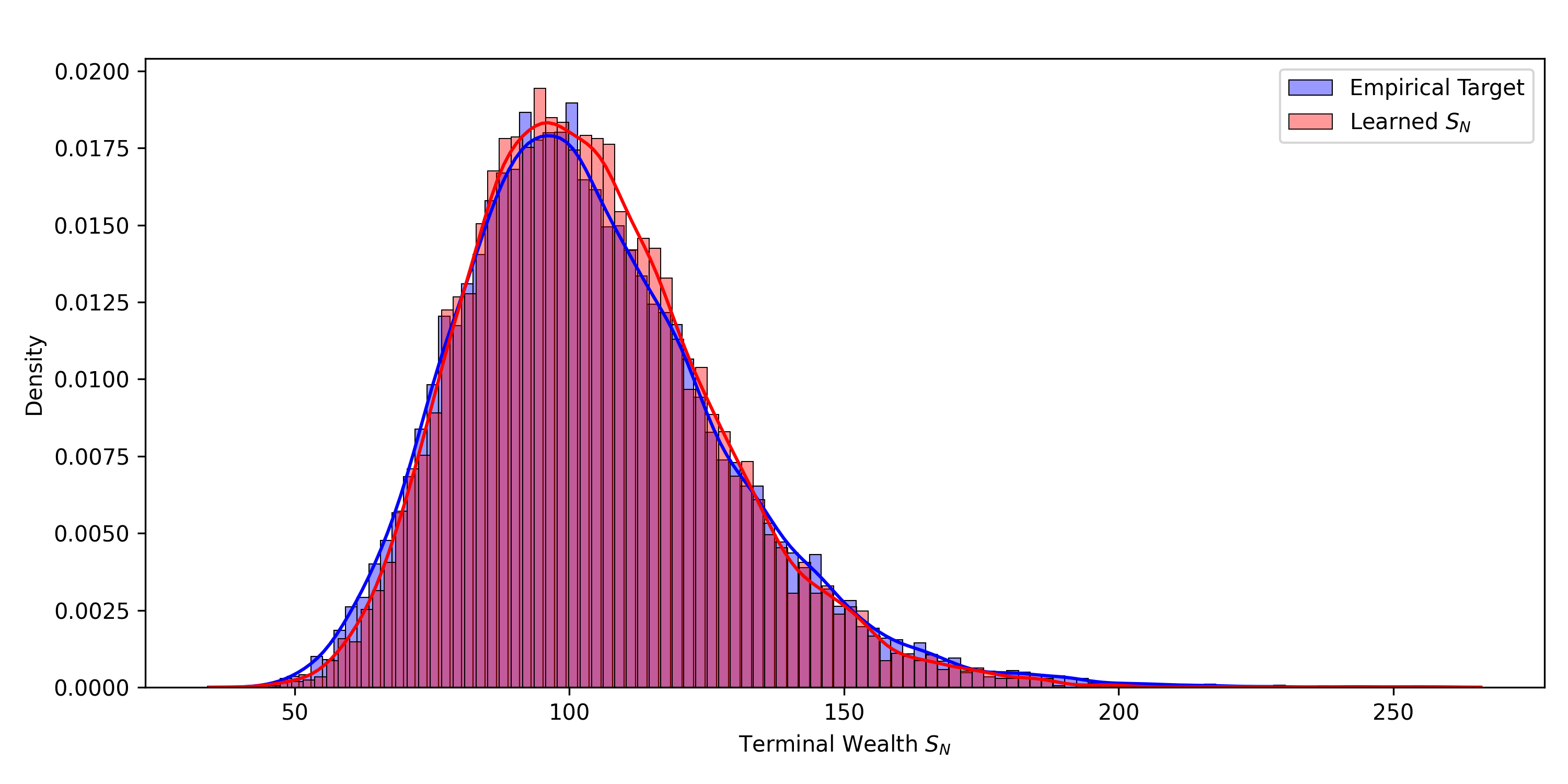}
        \caption{Histogram}
        \label{fig:wealth_dist_case_3}
    \end{subfigure}
    ~ 
    \begin{subfigure}[b]{0.4\textwidth}
        \includegraphics[width=\textwidth]{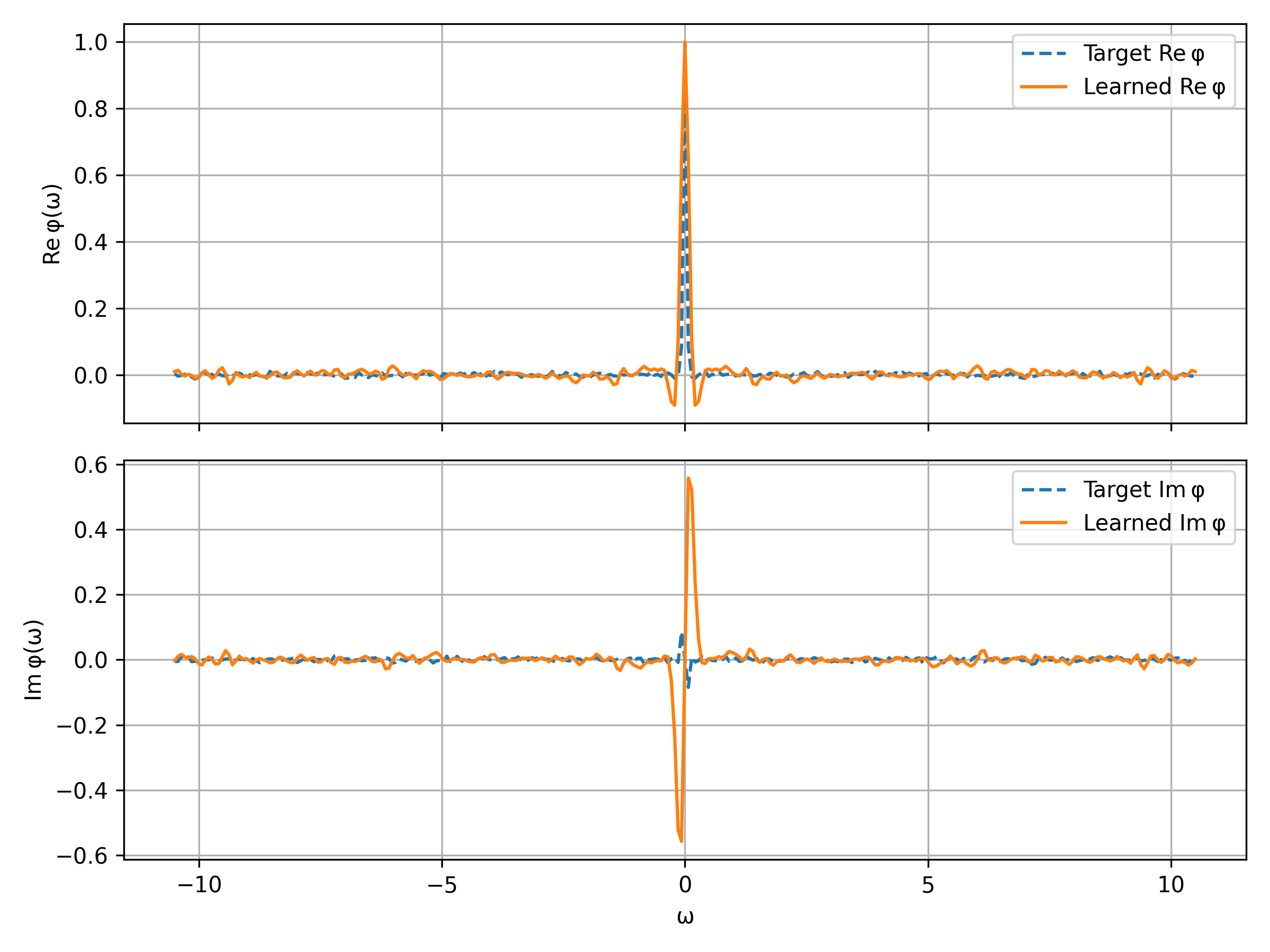}
        \caption{Characteristic Functions}
        \label{fig:cf_dist_wealth_case_3}
    \end{subfigure}
    \caption{Distribution of terminal wealth}
    \end{figure}

\begin{figure}[H]
	\centering
	\includegraphics[width=0.5\textwidth]{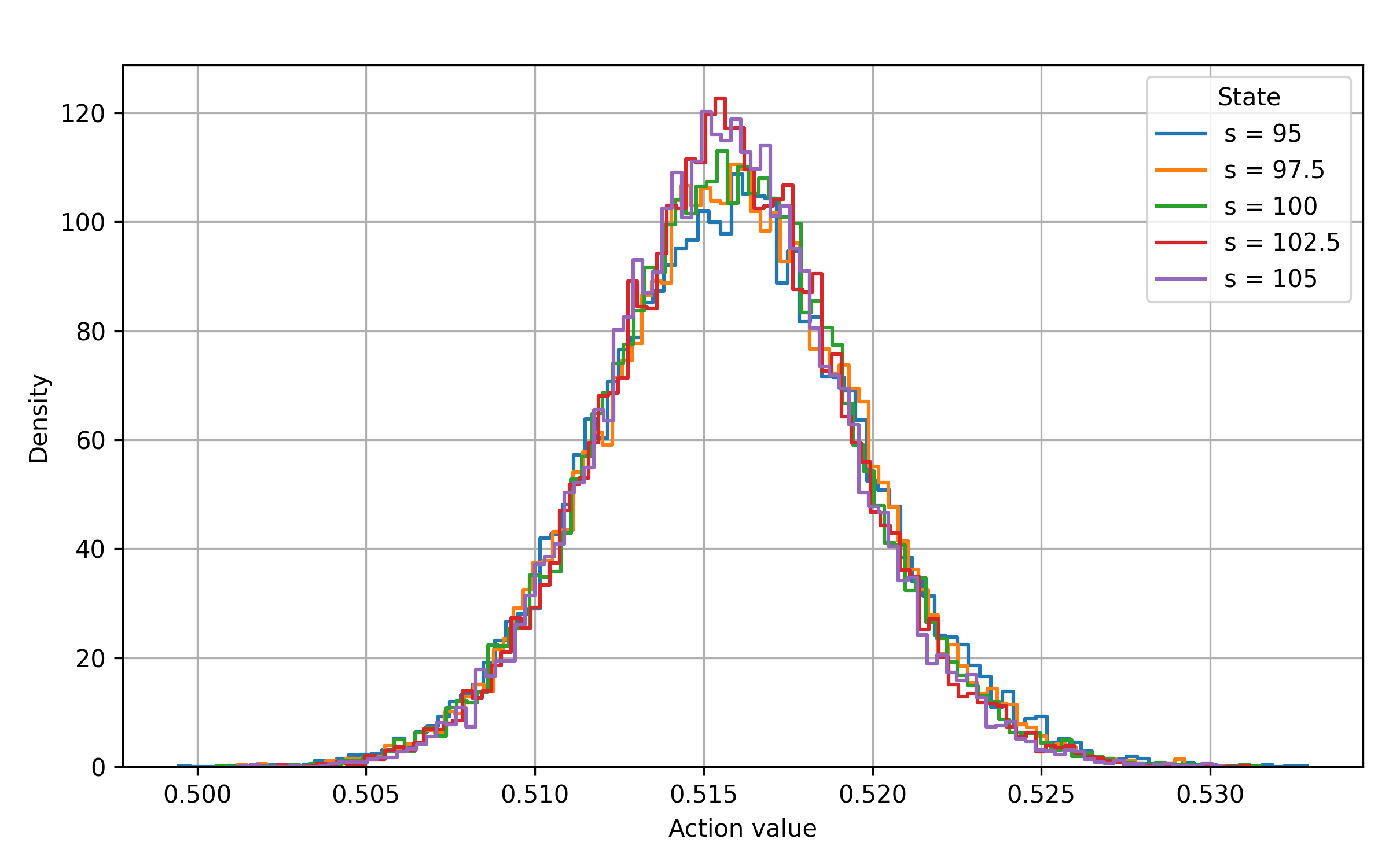}
	\caption{Optimal actions distributions for Different wealth states}
	\label{fig:optimal_action_dist_wealth_case_3}
\end{figure}

\subsection{An Example with Compact Support}
This example is related to the first one. Our goal is to exemplify a rather unique phenomenon. Due to the structure of the cost function we can identify a unique control distribution that stirs the discounted rewards towards the target. In contrast with Example \ref{subse:wealth_example}, here the control enters inside a nonlinear function and together with the noise on the dynamics shape the reward distribution.

The policy gradient can indeed recover the unique control distribution when the length of the action interval permits unique identifiability of policies or otherwise only by enforcing the symmetry of the control distribution a posteriori. 

In concrete, we consider linear dynamics with $T=1$, a randomized (relaxed) control $a_0$ and additive Gaussian noise:
\begin{equation}
\label{eq:dynamics2}
 s_1 = s_0 + a_0 + \varepsilon_1, \qquad \varepsilon_1 \sim \mathcal N(0,\sigma_\varepsilon^2),
\end{equation}
where $s_0\in\mathbb R$ is known and $a_0$ is a control random variable, independent of $\varepsilon_1$.
Define the bounded performance variables $r_0\equiv 0$ and
\begin{equation}
\label{eq:r-def}
 r_1(s_1,a_1) := \cos(s_1) \in [-1,1],
\end{equation}
thus, we are interested in the law of $R_2=\cos(s_0+a_0+\varepsilon_1).$ Obviously we can restrict $a_0$ to a compact set $I$ and our Assumption 1-5 are satisfied.
\paragraph{Target Distribution}
Our \emph{target} distribution is the Epanechnikov law on $[-1,1]$ with density
\begin{equation}
\label{eq:epa-density}
 f_E(x) = \frac{3}{4}\Bigl(1 - x^2\Bigr)\,1_{\{|x|\le 1\}},
\end{equation}
its characteristic function admits the closed form
\begin{equation}
\label{eq:phiE}
 \phi_E(u) = \frac{3}{u^3}\Bigl(\sin(u ) - u \cos(u)\Bigr), \qquad \phi_E(0)=1.
\end{equation}

We seek a distribution of the randomized control $a_0$ such that the distribution of $R_2$ matches the Epanechnikov law. 
We assume $a_0$ takes values in a compact interval $I$ of length $|I|<\pi$, so that its law is uniquely determined by its Fourier coefficients on $\mathbb Z$. This ensures that the Fourier coefficients $\psi_A(k):=\EE\big[e^{ik a_0}\big]$ with $\,k\in\ZZ $ fully characterize the distribution of $a_0$ and its  characteristic function by 
\begin{equation}
\phi_A(t) = \int_I e^{itx}p_{a_0}(x)\,dx = \sum_{k=-\infty}^{\infty} \psi_A(k)\,\int_I e^{i(t-k)x}\,dx.
\end{equation}

\paragraph{Exact matching condition}
With the help of the Jacobi--Anger identity we can compute the characteristic function of $R_2$ as follows:
\begin{align*}
\phi_{R_2}(u) &= \EE[e^{iu R_2}] = \sum_{k\in\mathbb Z} i^k J_k(u V)\EE[e^{iks_1}] = \sum_{k\in\mathbb Z} i^k J_k(u V )e^{iks_0}e^{-\frac12 k^2\sigma^2}\psi_A(k)
\end{align*}
where $J_k$ is the $k$-th Bessel function of the first kind. Thus the matching equation for the characteristic functions is
\begin{equation}
\label{eq:matching-functional}
 \sum_{k\in\mathbb Z} i^kJ_k(u V)e^{iks_0}e^{-\tfrac12 k^2\sigma^2}\psi_A(k) = \phi_E(u), \quad \forall u\in\mathbb R.
\end{equation}


\paragraph{Truncation and linear system}
Since $J_k$ and $e^{-k^2\sigma^2/2}$ decay rapidly, we truncate to $|k|\le K$ and select a finite frequency grid $\{u_\ell\}_{\ell=1}^L\subset[-K,K]$. The truncated system is
\begin{equation}
\label{eq:trancated_linear_system}
 J_0(u_\ell V) + 2\sum_{k=1}^{K} J_k(u_\ell V)e^{-\frac12k^2\sigma^2}x_k = \phi_E(u_\ell), \quad x_k=Re(i^ke^{iks_0}\psi_A(k)).
\end{equation}
Note that the symmetry $\psi_A(-k) = \overline{\psi_A(k)}$ and $J_{-k} = (-1)^k J_k$ makes the imaginary parts cancel. Solving for $x=(x_1,\dots,x_K)$ yields a least-squares (or exact solution). In addition both left and right-hand side are even, forcing $x_k=0$ for all odd $k$. Then we reconstruct
\begin{equation}
 \psi_A(k) = i^{-k}e^{-iks_0}x_k, \qquad |\psi_A(k)|\le 1,\quad \psi_A(-k)=\overline{\psi_A(k)}.
\end{equation}

The density on $I$ (of width $<\pi$) follows by Fourier inversion:
\begin{equation}
 p_{a_0}(x) = \frac{1}{2\pi}\Big(1+2\sum_{k=1}^{K}Re(\psi_A(k)e^{-ikx})\Big),\quad x\in I.
\end{equation}

Finally, we notice that $p_{a_0}$ is $\pi$-periodic since all odd Fourier modes $\psi_A(k)$ are zero. 

\paragraph{Numerical Results and Discussion}
First for the numerical solution of \eqref{eq:trancated_linear_system} we fix $K=16$ and $L>>K$ in particular $L=8001$ (for increased accuracy in our policy gradient since we used the same grid), and thus implement a least square solution. If we write the right-hand side  of \eqref{eq:trancated_linear_system} in matrix form we need the corresponding matrix to have full rank - a condition easily verifiable in our case - in order to have a unique solution. For the MC samples we used $M=72 \times 1024$ trajectories per iteration. 

We run two sets of experiments one on the interval $I_1=[0,\pi]$ to have uniquely identifiable actions and one on the interval $I_2=[-\pi,\pi]$ to try and identify uniquely the optimal policy enforcing the symmetry of the distribution. 

\noindent
\textbf{Case $I_1=[0,\pi]$}\\

\begin{figure}[H]
    \centering
    \begin{subfigure}[b]{0.45\textwidth}
        \includegraphics[width=\textwidth]{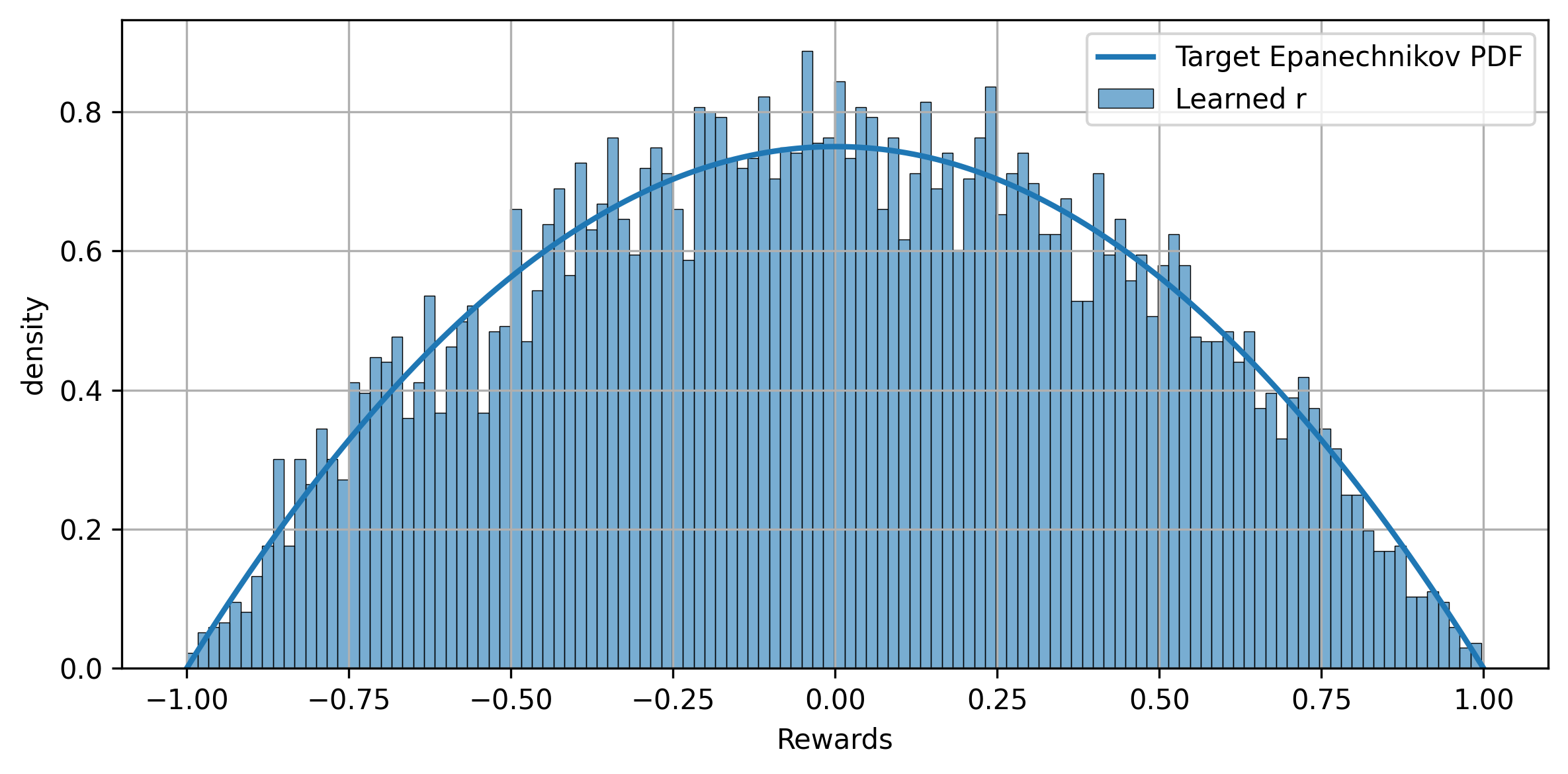}
        \caption{Histogram}
        \label{fig:return_dist2-3}
    \end{subfigure}
    ~ 
    \begin{subfigure}[b]{0.4\textwidth}
        \includegraphics[width=\textwidth]{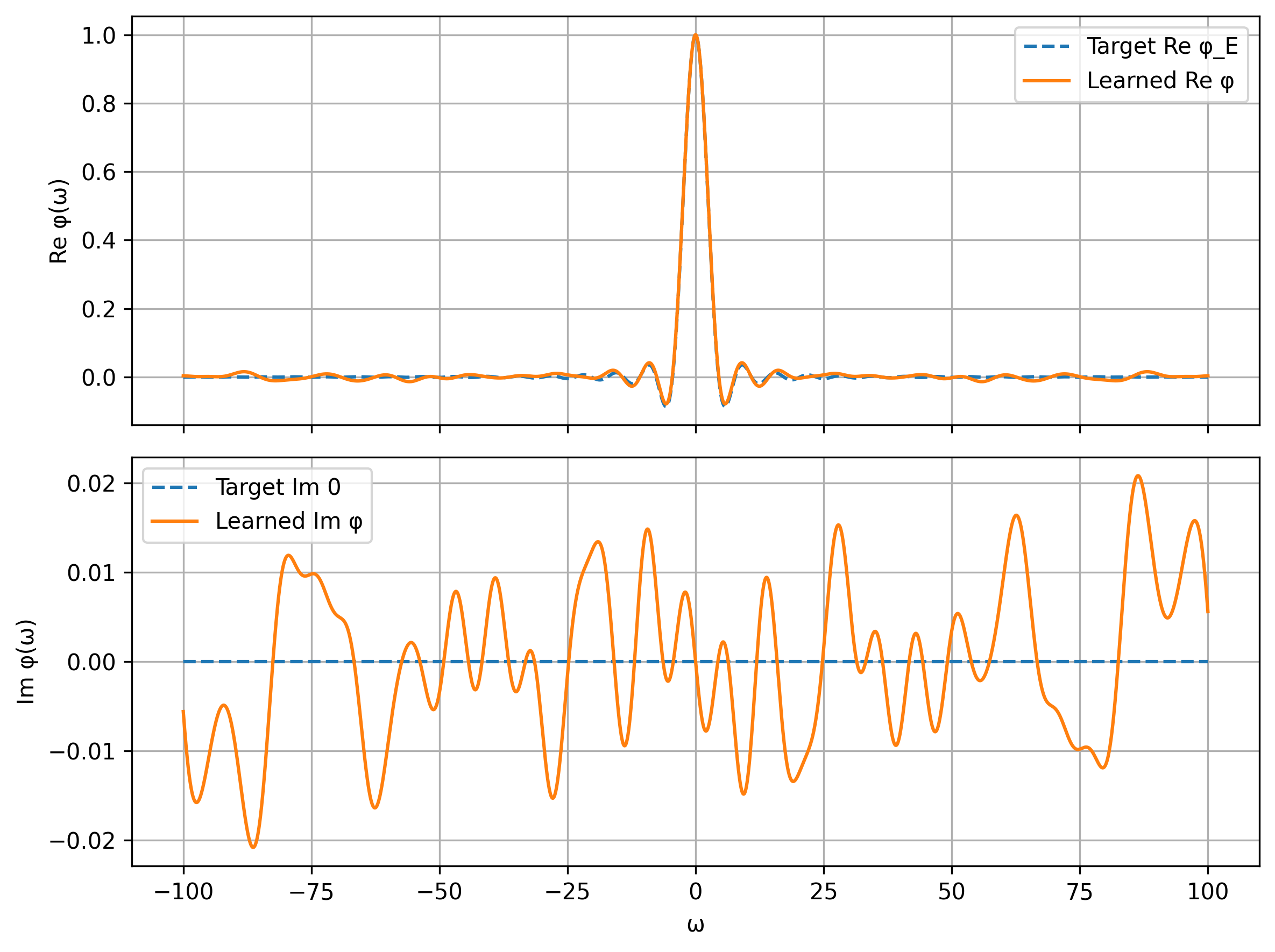}
        \caption{Characteristic Function }
        \label{fig:return_cf2-2}
    \end{subfigure}
    \caption{Distribution of Rewards}
\end{figure}

\begin{figure}[H]
    \centering
    \begin{subfigure}[b]{0.45\textwidth}
        \includegraphics[width=\textwidth]{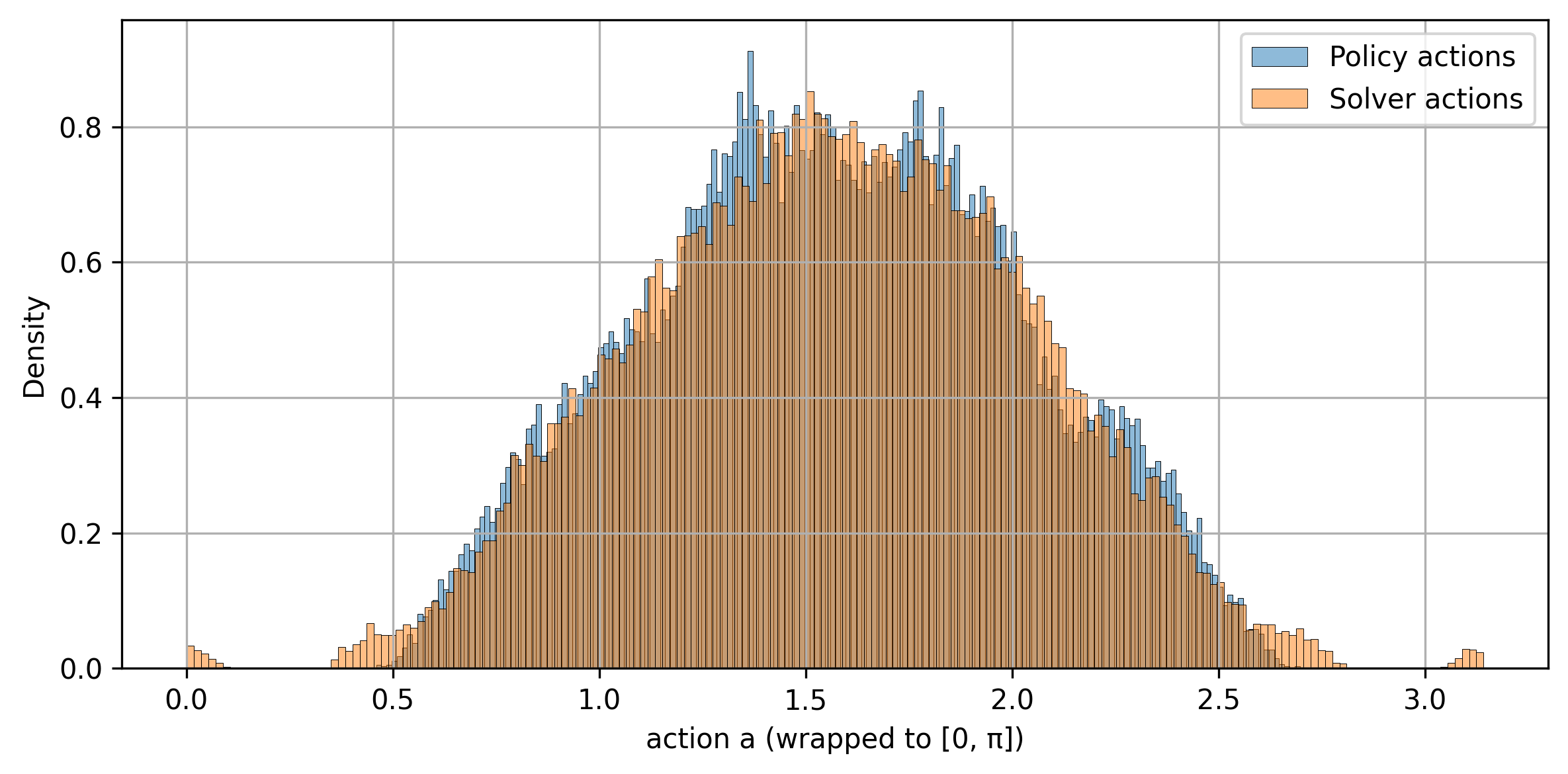}
        \caption{Histogram}
        \label{fig:action_dist2}
    \end{subfigure}
    ~ 
    \begin{subfigure}[b]{0.4\textwidth}
        \includegraphics[width=\textwidth]{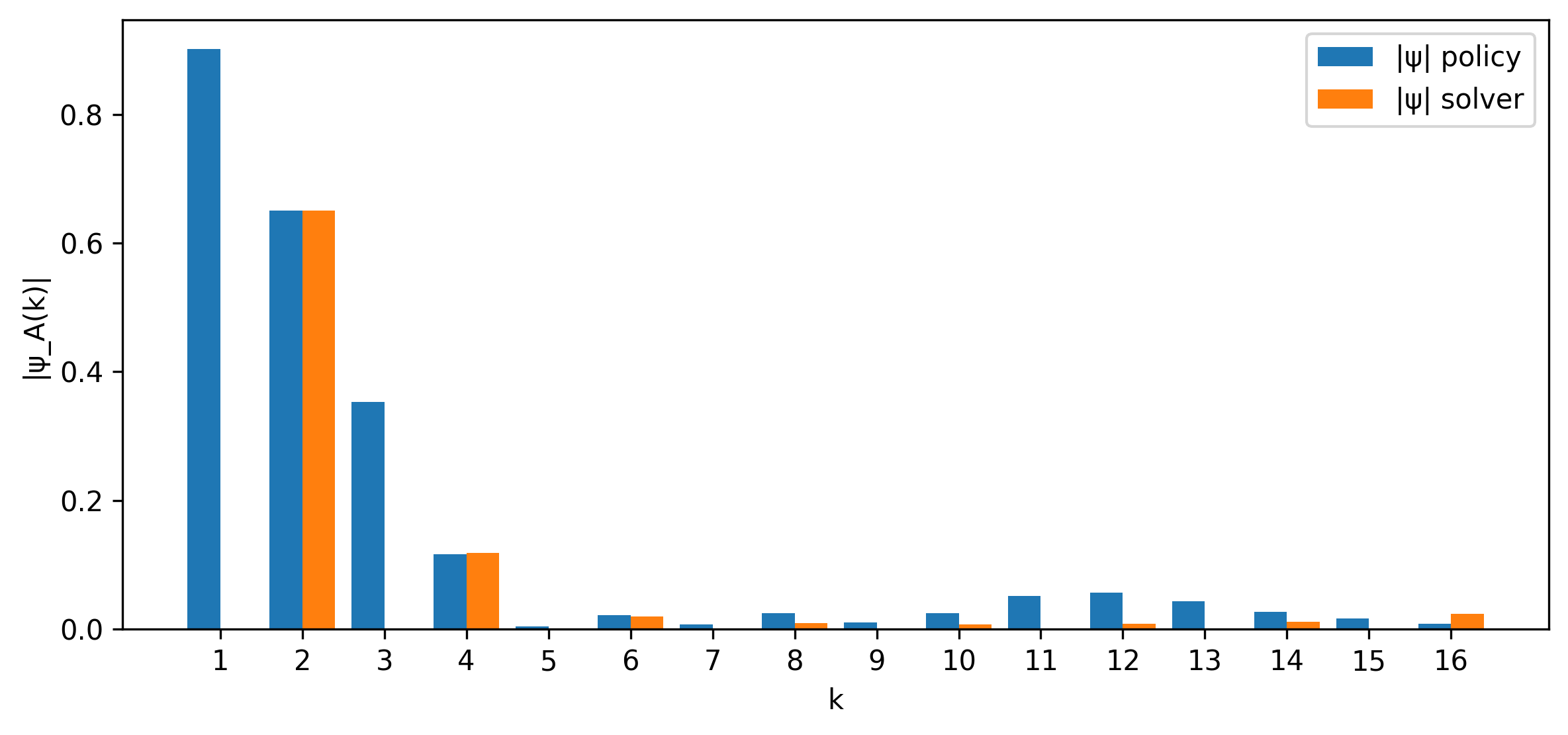}
        \caption{Fourier modes $\psi_A(k)$}
        \label{fig:action_fourier_2}
    \end{subfigure}
    \caption{Action distribution without projecting}
\end{figure}
First note that the density of learned distribution is rather close to the target distribution, cf. Figure \ref{fig:return_dist2-3}. In Figure \ref{fig:return_cf2-2} we compare the real and imaginary part of the learned and target characteristic function. Since we restrict to $I_1=[0,\pi]$, the imaginary part is not zero in the learned case, as it is in the target case.
However, here the policy gradient algorithm can indeed find the unique action distribution. An attentive reader could notice that in Figure \ref{fig:action_fourier_2} the odd modes are not zero as they should be. This is due to the fact that we have a $\pi$-periodic density and we have samples only on the one side of the density.  To reveal the true symmetric density we need to have a look at the interval $I_2=[-\pi,\pi]$ as we do in the next experiment. 

So when only one period is used the symmetry cannot be obtain in terms of Fourier modes but the Wasserstein distance is effectively minimal  $ {W_1=0,01302292}$.

\noindent
\textbf{Case $I_2=[-\pi,\pi]$}\\
Here the Solver solution for \eqref{eq:trancated_linear_system} gives two periods of the $\pi$-periodic density. The Policy gradient is not capable to identify correctly the complete periodic density since for the reward there is no way to differentiate between periodic actions and stays concentrated on the one of the two branches of the density (since the algorithm is stochastic the outcome may vary from run to run, meaning that sometimes the left lobe is selected and other times the right, cf. Figures \ref{fig:return_dist2-4} and \ref{fig:return_cf2-3}  for the densities and characteristic functions of the rewards and Figures \ref{fig:action_dist2-2} and \ref{fig:action_fourier} for the action densities and Fourier modes of the action distribution.

\begin{figure}[H]
    \centering
    \begin{subfigure}[b]{0.45\textwidth}
        \includegraphics[width=\textwidth]{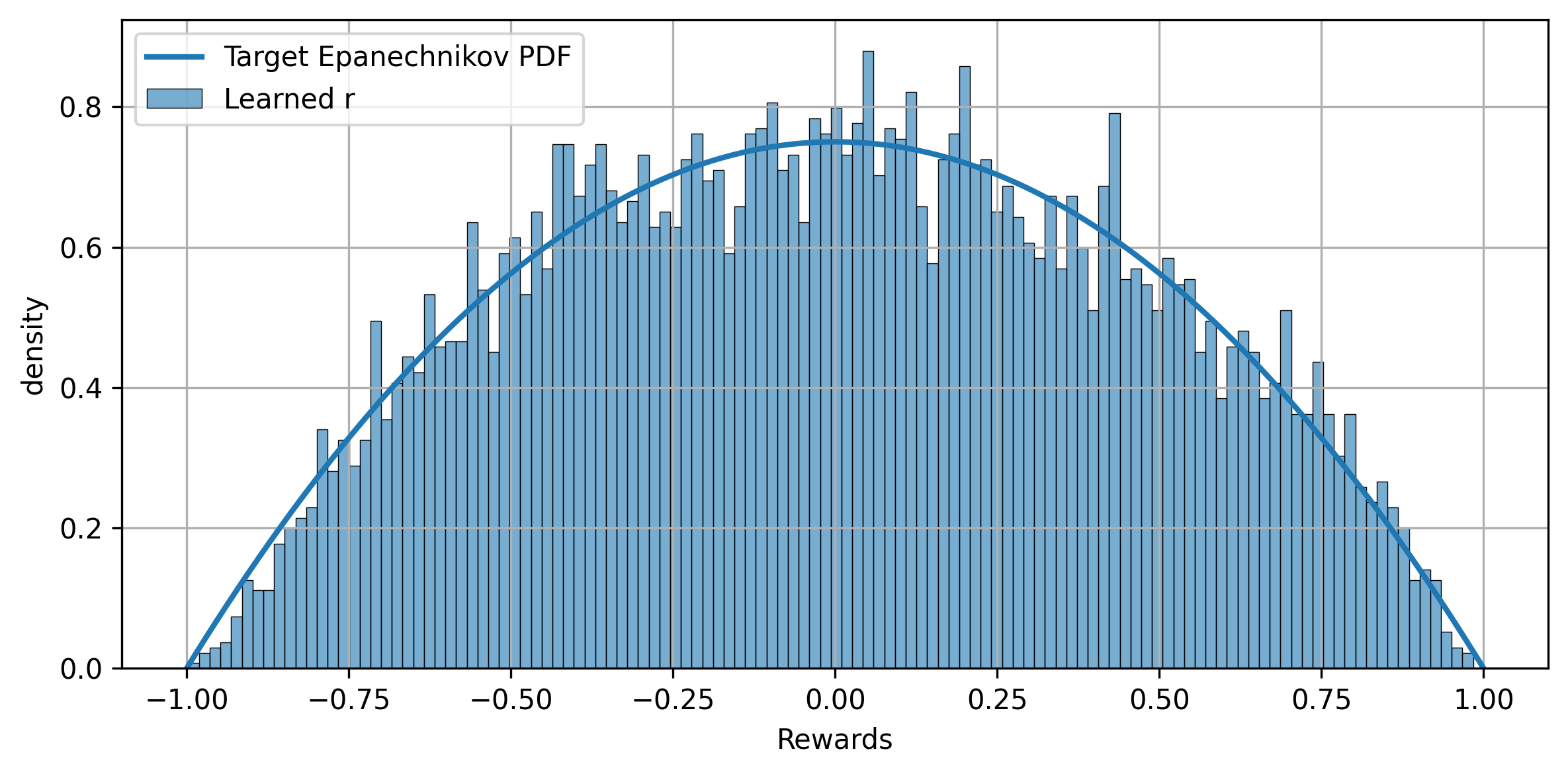}
        \caption{Histogram}
        \label{fig:return_dist2-4}
    \end{subfigure}
    ~ 
    \begin{subfigure}[b]{0.4\textwidth}
        \includegraphics[width=\textwidth]{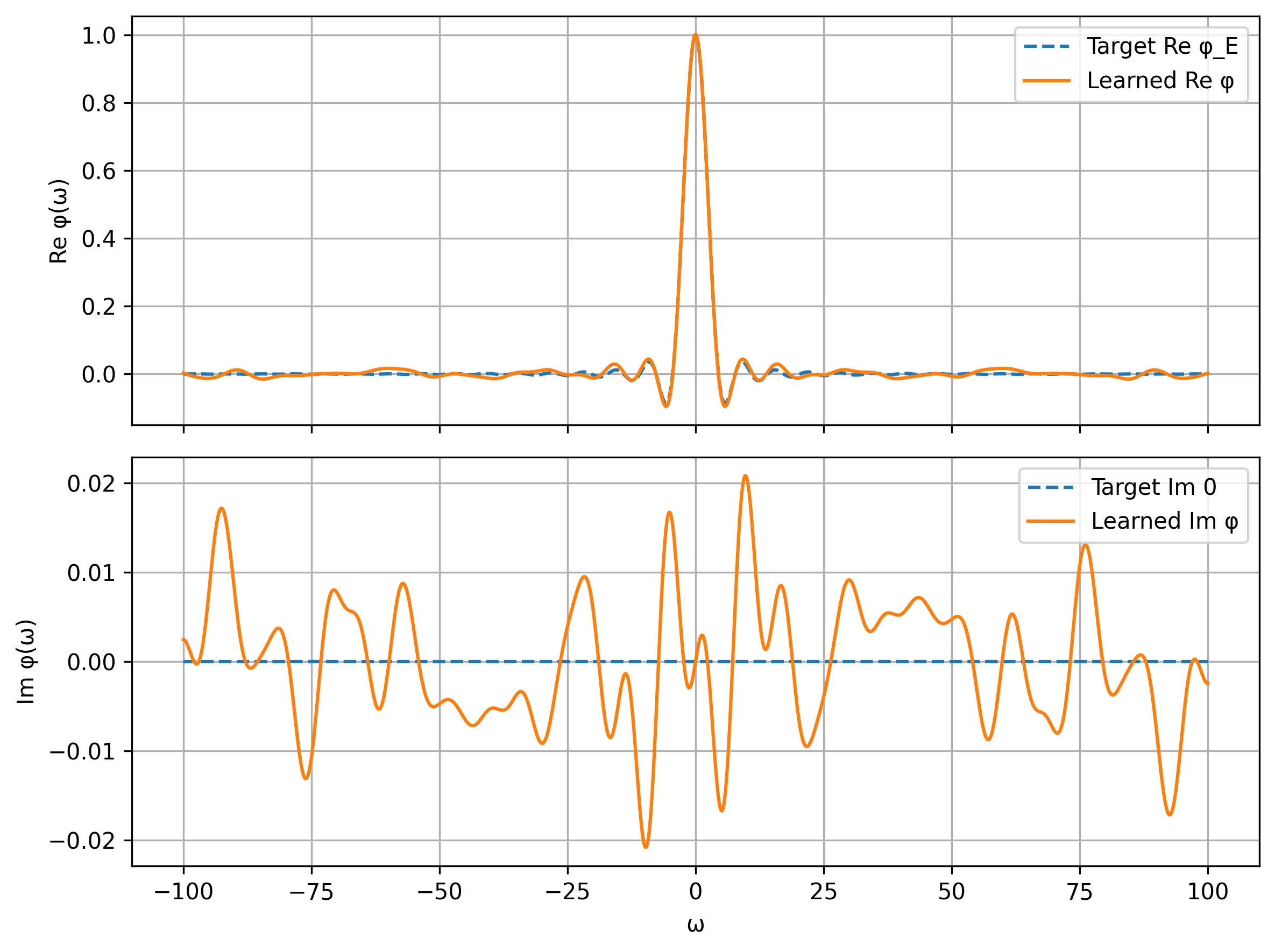}
        \caption{Characteristic Function }
        \label{fig:return_cf2-3}
    \end{subfigure}
    \caption{Distribution of Rewards}
    \end{figure}

\begin{figure}[H]
    \centering
    \begin{subfigure}[b]{0.45\textwidth}
        \includegraphics[width=\textwidth]{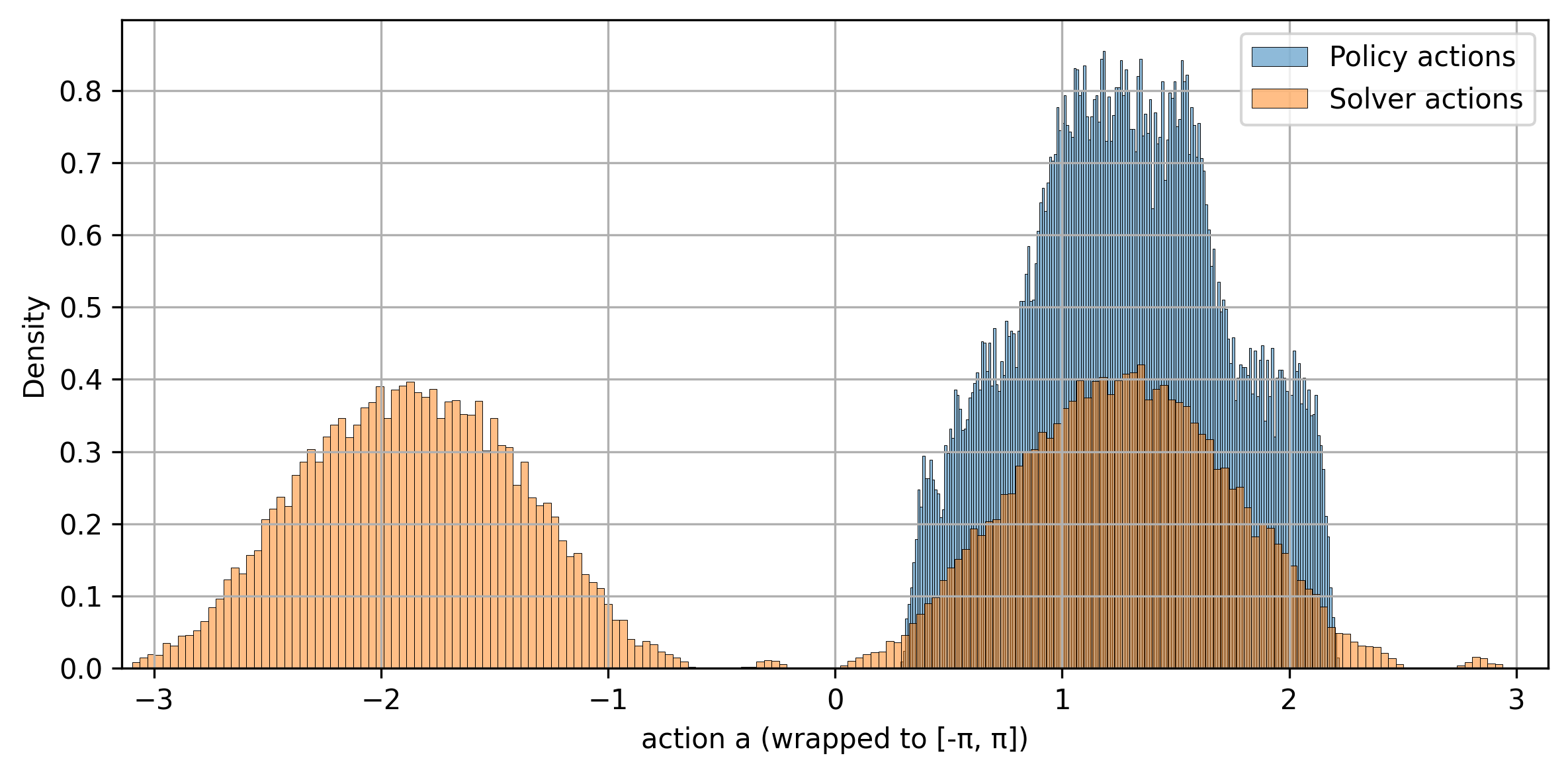}
        \caption{Histogram}
        \label{fig:action_dist2-2}
    \end{subfigure}
    ~ 
    \begin{subfigure}[b]{0.4\textwidth}
        \includegraphics[width=\textwidth]{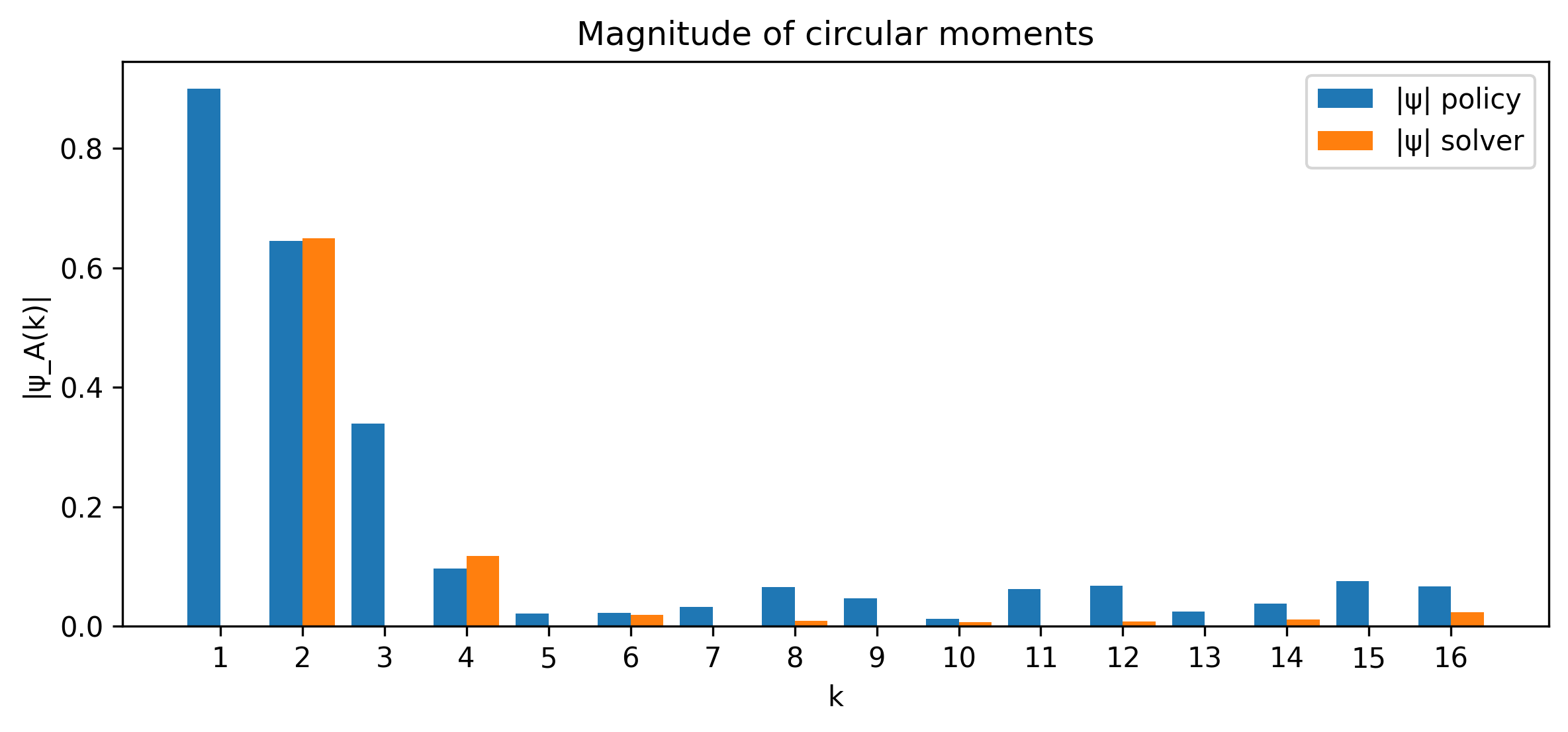}
        \caption{Fourier modes $\psi_A(k)$}
        \label{fig:action_fourier}
    \end{subfigure}
    \caption{Action distribution without projecting}
\end{figure}

Nevertheless, if we split equally and project the samples to the opposite interval after training we can recover the correct periodic density from the inflated one of Figure \ref{fig:action_dist2}, see Figures \ref{fig:gull} and \ref{fig:tiger}.

\begin{figure}[H]
    \centering
    \begin{subfigure}[b]{0.45\textwidth}
        \includegraphics[width=\textwidth]{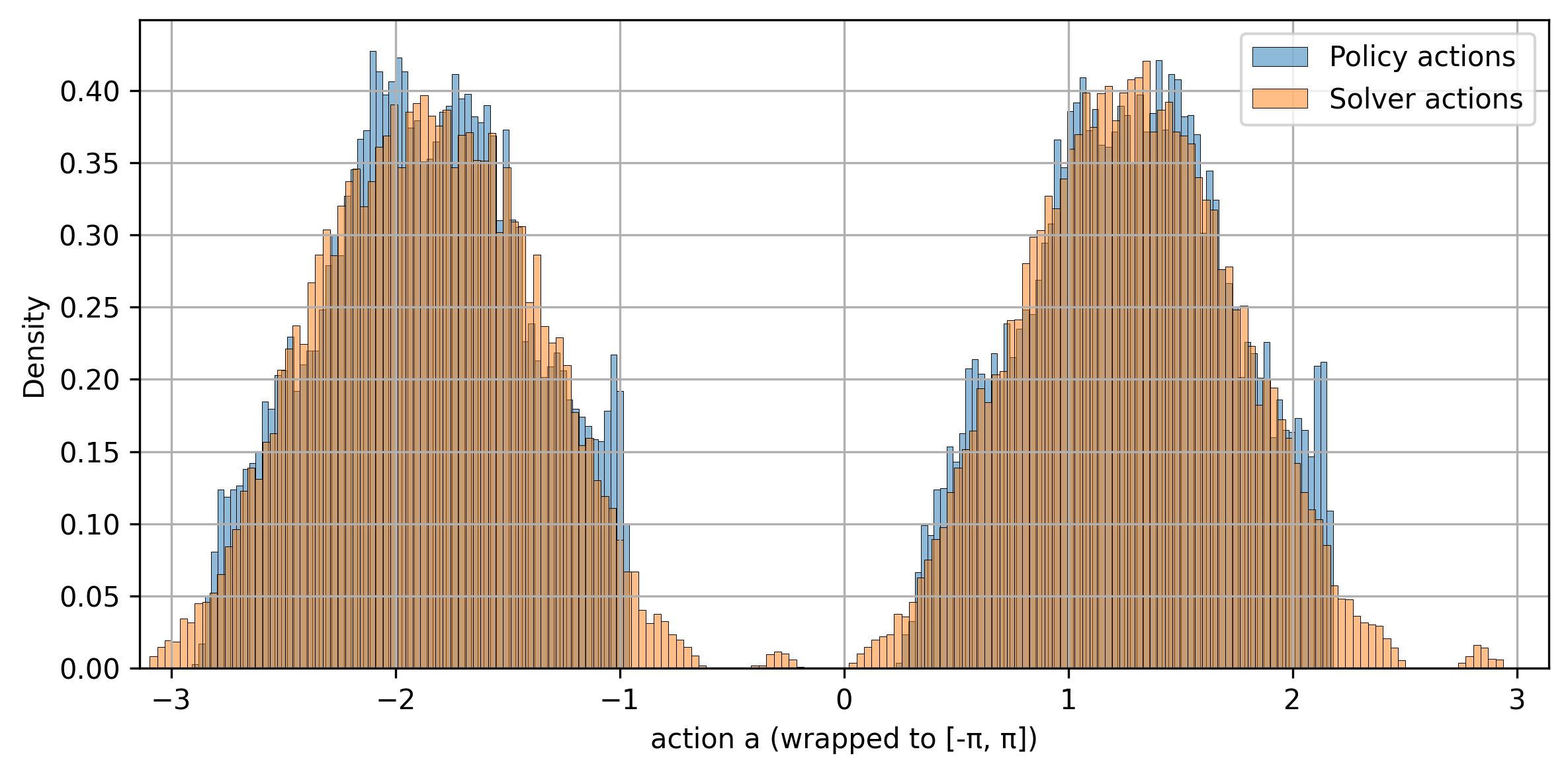}
        \caption{Histogram}
        \label{fig:gull}
    \end{subfigure}
    ~ 
    \begin{subfigure}[b]{0.4\textwidth}
        \includegraphics[width=\textwidth]{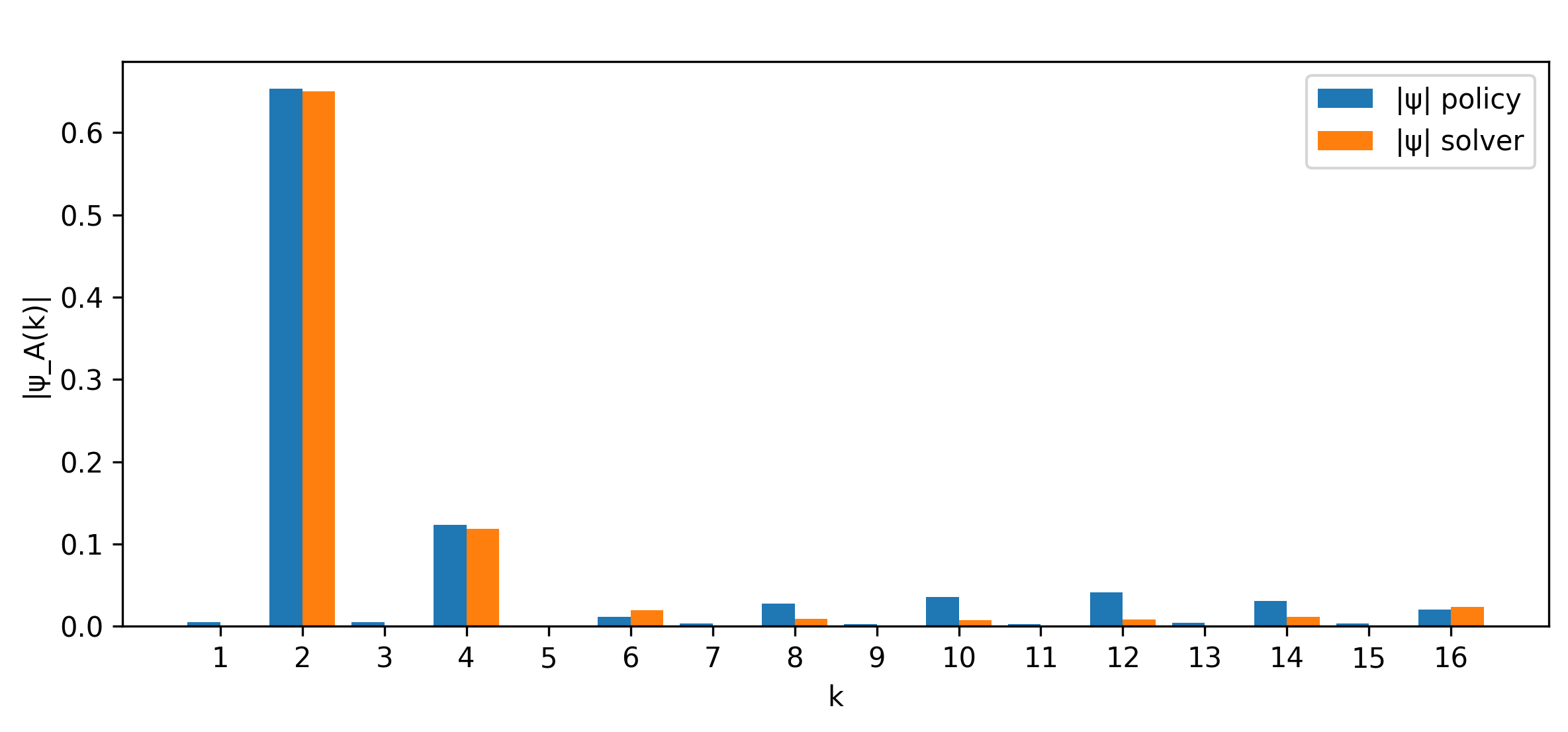}
        \caption{Fourier modes}
        \label{fig:tiger}
    \end{subfigure}
    \caption{Action distribution with projection}
\end{figure}

\begin{remark}[about the fit of the action distributions]
	To test the best possible fit for the action distribution we augment the loss function with an additional matching objective $\sum_{j=0}^{K=16}\vert \psi_A(k)- \hat\psi_A(k)w(u_\ell)\vert^2 $ for the Fourier modes that come from the numerical solution of \eqref{eq:trancated_linear_system}. As one can see in Figure \ref{fig:Policy_vs_solver} the periodic densities are very close and thus the expressivity of the ANN enough. We don't attempt here an exhaustive optimization of the architectures and the hyper-parameters since the goal is a proof of concept. 
		\begin{figure}[!h]
		\centering 
		\includegraphics[width=0.8\textwidth]{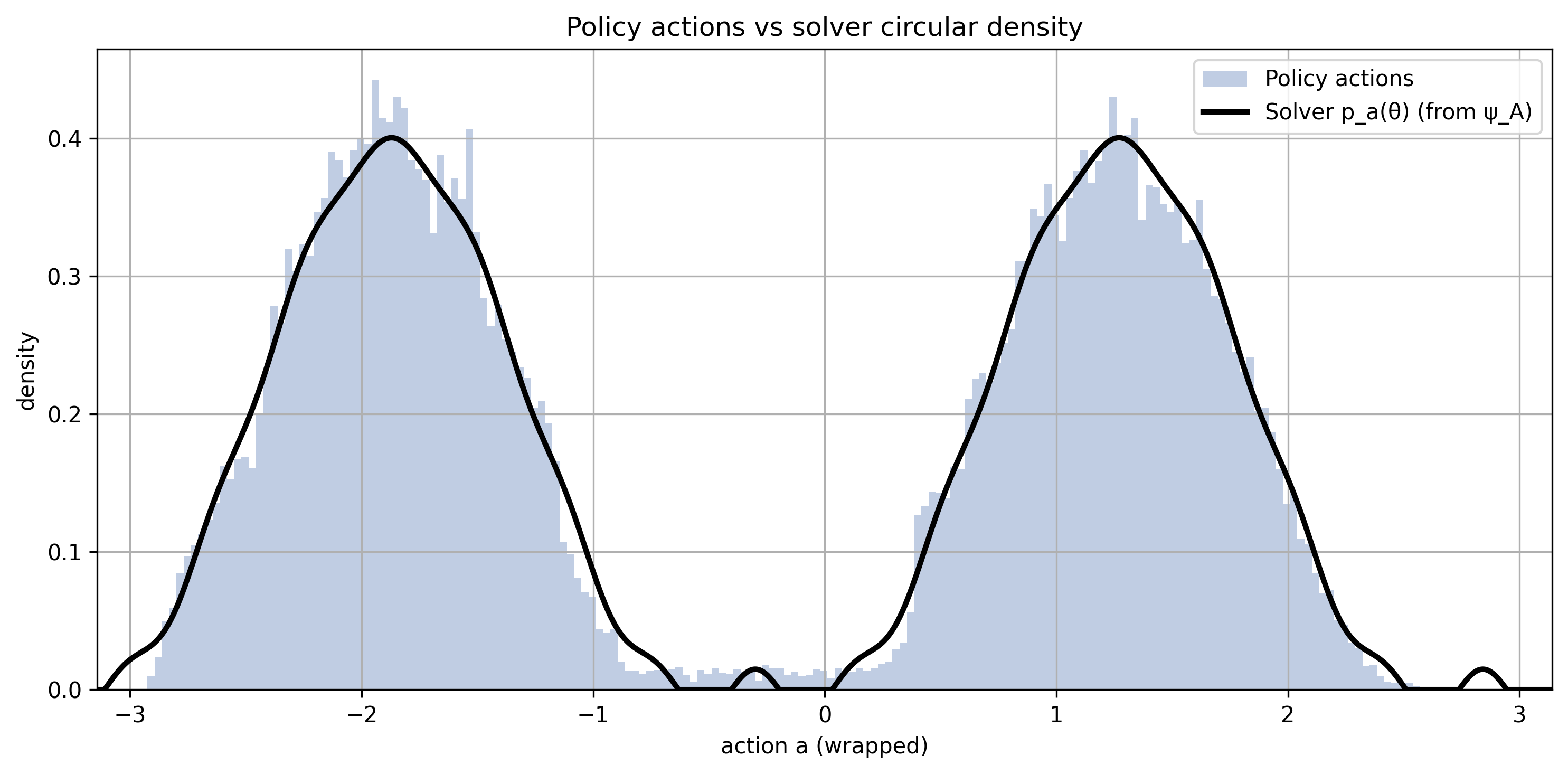}
\caption{Policy Distributions comparison}
		\label{fig:Policy_vs_solver}
	\end{figure}
\end{remark}

\subsection{Writing a Classical MDP as a Special Case Distributional MDP}
\label{se:forzen_lake}
We give another example, the standard \texttt{FrozenLake} MDP from \cite{towers2025gymnasiumstandardinterfacereinforcement}, a pleasant variant of \cite[Example 3.5]{SuttonBarto}. 
The episodic return of the MDP satisfies
\[
R_T \in \{0,1\},
\qquad
R_T = 1 \text{ if the goal is reached,}
\quad
R_T = 0 \text{ otherwise}.
\]
Let
\[
p_\sigma := \mathbb{P}^\sigma(R_T = 1)
\]
denote the success probability under policy $\sigma$. Then
\[
\mathrm{Law}^\sigma(R_T)
    = p_\sigma\,\delta_1 + (1-p_\sigma)\,\delta_0.
\]
Choose the target distribution as the ideal success law,
$\mu_{\mathrm{target}} = \delta_1.$
If we consider the squared $L^2$ distance between characteristic functions,
\[
\cL(\sigma)
:= \int_{\mathbb{R}}
\big|\varphi^\star(u)-\EE^\sigma[e^{iuR_T^\theta}] \big|^2
\, w(u)\, du,
\]
where $w$ is a nonnegative weight function. A direct computation yields
\[
\cL(\sigma)
= (1 - p_\sigma)^2
\int_{\mathbb{R}} |e^{iu} - 1|^2 w(u)\, du
=: C_w (1 - p_\sigma)^2,
\]
with $C_w \in (0,\infty)$ independent of $\sigma$

Thus, minimizing the characteristic function loss is  equivalent to
\[
\min_\sigma \cL(\sigma)
\quad \Longleftrightarrow \quad
\max_\sigma p_\sigma
\quad \Longleftrightarrow \quad
\max_\sigma \mathbb{E}_\sigma[R_T].
\]

Of course, this theoretical framework has to be adapted in practice to design practical algorithms since matching a Dirac is a difficult problem and also the issue of exploration has to be addressed. 


\vspace*{1cm}
\noindent
{\bf  Acknowledgment:}

{This work is supported by the Helmholtz Association Initiative and Networking Fund on the HAICORE@KIT partition. 
Apart from this, the authors received no financial support for the research, authorship, and/or publication of this article and have no competing interests to decalre.}
 

\section{Appendix}
In what follows we use as norm the $L^1$-norm i.e. for $x\in\RR^n$ we have $\|x\|=|x_1|+\ldots +|x_n|.$ The $L^2$ norm is denoted by $\|x\|_2$. Note that we always  have $\|x\|_2 \le \|x\|\le \sqrt{n}\|x\|_2. $
\subsection{The Gaussian Concentration Inequality for Lipschitz Functions}\label{sec:concentration}
 The next result follows from the Gaussian Isoperimetric Inequality and will be used to control the fluctuations of our network's output, for details see \cite{ledoux1994isoperimetry}.
\begin{lemma}[Gaussian Concentration for Lipschitz Functions] 
\label{lem:gaussian-concentration}
Let $Z\in\mathbb{R}^d$ be a Gaussian random vector $Z\sim \mathcal{N}(0,\sigma_z^2 I_d)$ with standard deviation $\sigma_z$ in each direction. Suppose $f: \mathbb{R}^d \to \mathbb{R}$ is a Lipschitz continuous  function with constant $L$. Then for any $\epsilon > 0$, the deviation of $f(Z)$ from its mean is exponentially small
\[
\PP\Big(|f(Z) - \mathbb{E}[f(Z)]| > \epsilon\Big) ~\le~ 2 \exp\!\Big(-\frac{\epsilon^2}{2\,L^2\,\sigma_z^2}\Big)\,. 
\] 
In particular, with probability at least $1-\delta$, one has 
\[
|f(Z)-\mathbb{E}[f(Z)]| \le L\,\sigma_z \sqrt{2\ln(2/\delta)}\,.
\] 
\end{lemma}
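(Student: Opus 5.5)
The statement to prove is the Gaussian concentration inequality for Lipschitz functions (Lemma \ref{lem:gaussian-concentration}). This is classical, so I would give a short self-contained derivation rather than rely on the isoperimetric inequality.

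\emph{Reduction.} First I reduce to the standard case $\sigma_z=1$. Write $Z=\sigma_z Y$ with $Y\sim\mathcal N(0,I_d)$ and set $g(y):=f(\sigma_z y)$. Then $g$ is $L\sigma_z$-Lipschitz, and $g(Y)=f(Z)$. So it suffices to show that for an $L$-Lipschitz $g$,
\[
\PP\big(|g(Y)-\EE g(Y)|>\epsilon\big)\le 2e^{-\epsilon^2/(2L^2)}.
\]
By Rademacher's theorem, and after a standard mollification ($g_\delta=g*\rho_\delta$, which is still $L$-Lipschitz and converges uniformly), I may assume $g$ is smooth with $\|\nabla g\|_2\le L$. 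I also assume $\EE g(Y)=0$. Each tail inequality then passes to the limit $\delta\to0$ by uniform convergence.

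\emph{Exponential moment bound (the main step).} The core of the proof is the sub-Gaussian moment bound
\[
\EE e^{\lambda g(Y)}\le e^{\lambda^2L^2/2}\qquad\text{for all }\lambda\in\RR.
\]
This constant $1/2$ is what yields the exponent $\epsilon^2/(2L^2)$. A Maurey--Pisier interpolation argument would only give the constant $\pi^2/8$, so I would instead use the Gaussian log-Sobolev inequality via the Herbst argument.

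Gross's inequality states $\mathrm{Ent}(h^2)\le 2\EE\|\nabla h\|_2^2$. Apply it with $h=e^{\lambda g/2}$ and set $H(\lambda)=\EE e^{\lambda g}$. This gives
\[
\lambda H'(\lambda)-H(\lambda)\log H(\lambda)\le \tfrac{\lambda^2L^2}{2}H(\lambda).
\]
Equivalently, $\frac{d}{d\lambda}\big(\lambda^{-1}\log H(\lambda)\big)\le L^2/2$. Integrating from $0$, and using $\lambda^{-1}\log H(\lambda)\to \EE g=0$ as $\lambda\to0$, yields the moment bound.

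Alternatively, one can avoid log-Sobolev by using the Ornstein--Uhlenbeck semigroup interpolation $Y_t=e^{-t}Y+\sqrt{1-e^{-2t}}Y'$. This gives the covariance identity $\mathrm{Cov}(g,e^{\lambda g})=\int_0^\infty e^{-t}\EE[\langle\nabla g(Y),\nabla(e^{\lambda g})(Y_t)\rangle]\,dt$, which leads to the same differential inequality.

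\emph{Conclusion.} Markov's inequality then gives, for $\lambda>0$,
\[
\PP(g(Y)>\epsilon)\le e^{-\lambda\epsilon+\lambda^2L^2/2}.
\]
Optimizing at $\lambda=\epsilon/L^2$ gives $e^{-\epsilon^2/(2L^2)}$. Applying the same bound to $-g$ and taking a union bound produces the factor $2$.

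For the "in particular" statement, set $2\exp(-\epsilon^2/(2L^2\sigma_z^2))=\delta$ and solve, which gives $\epsilon=L\sigma_z\sqrt{2\ln(2/\delta)}$.

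I expect the only real obstacle to be obtaining the sharp constant. That is exactly why I route the argument through log-Sobolev/Herbst rather than a cruder interpolation, and why I would cite \cite{ledoux1994isoperimetry} for the log-Sobolev inequality itself.
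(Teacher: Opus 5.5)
Your proof is correct, but it takes a different route from the paper. The paper gives no argument of its own: it attributes the lemma to the Gaussian isoperimetric inequality and refers to \cite{ledoux1994isoperimetry}.

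Isoperimetry most naturally gives concentration around a median $m_f$, namely $\PP(f(Z)>m_f+\epsilon)\le 1-\Phi\bigl(\epsilon/(L\sigma_z)\bigr)$. This is an exact Gaussian tail and comes with a geometric statement about enlargements of sets. Reaching the mean-centered bound $2e^{-\epsilon^2/(2L^2\sigma_z^2)}$ exactly as stated needs an extra step. Simply shifting from the median to the mean, using $|\EE f(Z)-m_f|\le\sqrt{2/\pi}\,L\sigma_z$, degrades the constants.

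Your log-Sobolev/Herbst argument instead delivers the sub-Gaussian Laplace bound $\EE e^{\lambda(f(Z)-\EE f(Z))}\le e^{\lambda^2L^2\sigma_z^2/2}$ directly. Centering at the mean and the exact constants then come out of a single Chernoff step, and the method carries over to any measure satisfying a log-Sobolev inequality. The cost is that Gross's inequality is used as a black box. Your Ornstein--Uhlenbeck alternative also works, though it gives $H'(\lambda)\le\lambda L^2H(\lambda)$ rather than the Herbst inequality; integrating it yields the same bound.

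Finally, by using $\|\nabla g\|_2\le L$ you read ``Lipschitz with constant $L$'' with respect to the Euclidean norm, which is the reading under which the lemma holds. Under the $\ell^1$ convention announced at the start of the appendix, take $f(x)=L(x_1+\dots+x_d)$. Then $f(Z)\sim\mathcal N(0,L^2d\sigma_z^2)$, so the stated bound fails for $d\ge 2$, and one can only guarantee it with $L\sqrt d$ in place of $L$.
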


\subsection{Bounds}\label{sec:bounds}
In what follows we need bounds on several quantities. We consider everything on $\Theta\times G.$
From \eqref{eq:s_bound}  we obviously have constants $B^s_t$ and $B^z$ such that for all $t:$
\begin{equation}
    |s_t^\theta| \le B^s_t,\quad  |z_t| \le B^z.
\end{equation}
We set  $B^s = \max_{t=0,1,\ldots,T} B_t^s.$
Since $\Theta$ is bounded we can define 
\begin{equation}
    B^\theta = \max_{\theta\in\Theta} \|\theta\|
\end{equation}
Now we can prove by induction:

\begin{lemma}\label{lem:boundssa}
    There exist constants $B_t^R,B_t^a>0, t=0,\ldots,T$ such that
    \begin{equation}
         |R_t^\theta(\omega)| \le B_t^R,\quad  |a_t^\theta(\omega)|\le B_t^a, \quad (\theta,\omega)\in\Theta\times G
    \end{equation}
    and we define $B^R := \max_{t=0,1,\ldots,T} B_t^R, B^a := \max_{t=0,1,\ldots,T} B_t^a.$ 
\end{lemma}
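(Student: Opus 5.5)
We argue by simultaneous induction on $t$, working pointwise for $(\theta,\omega)\in\Theta\times G$. On $G$ we already have $|s_t^\theta|\le B^s_t$ by \eqref{eq:s_bound} and $|z_t|\le B^z$, and $\|\theta\|\le B^\theta$ on $\Theta$; the only quantities left to bound are the actions $a_t^\theta$ and the accumulated rewards $R_t^\theta$. The base case is $R_0^\theta=0$, so $B_0^R:=0$.

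\textbf{Action bound given a reward bound.} Suppose $|R_t^\theta|\le B_t^R$ on $\Theta\times G$. From the network representation \eqref{eq:definition_NN_formula2}, $a_t^\theta=w^2\sigma(W^1x_t+b^1)+b^2$ with $x_t=(s_t^\theta,R_t^\theta,z_t,t)$. Since $\sigma$ is Lipschitz with some constant $L_\sigma$ (Assumption~\ref{as:1}), we have $|\sigma(y)|\le|\sigma(0)|+L_\sigma|y|$. Moreover,
\[
|W^1x_t+b^1|\le B^\theta\big(B^s_t+B^R_t+B^z+T\big)+B^\theta .
\]
Hence we may take
\[
|a_t^\theta|\le B^\theta\Big(|\sigma(0)|+L_\sigma B^\theta\big(B^s_t+B^R_t+B^z+T+1\big)\Big)+B^\theta=:B^a_t .
\]
This is a finite constant depending only on $t$, not on $(\theta,\omega)$.

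\textbf{Reward bound for the next step.} Next, $R_{t+1}^\theta=R_t^\theta+r(s_t^\theta,a_t^\theta)$. By Assumption~\ref{ass:rFLip}(i),
\[
|r(s,a)|\le|r(s^\circ,a^\circ)|+L_r\big(|s-s^\circ|+|a-a^\circ|\big)
\]
for a fixed reference point $(s^\circ,a^\circ)\in\cS\times\cA$. Taking $(s^\circ,a^\circ)=(0,0)$ after translating if necessary, this gives
\[
|R_{t+1}^\theta|\le B_t^R+|r(s^\circ,a^\circ)|+L_r\big(B^s_t+|s^\circ|+B^a_t+|a^\circ|\big)=:B^R_{t+1}.
\]
This closes the induction. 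Taking maxima over the finitely many $t\in\{0,\dots,T\}$ yields $B^R$ and $B^a$. For $t=T$, where no action is used, any $B^a_T$ (for instance the formula above) may be set by convention.

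\textbf{Main obstacle.} The only delicate point is that the reward Lipschitz assumption, rather than boundedness, must supply the bound on $r$. This needs a reference point in $\cS\times\cA$, which exists since both spaces are nonempty. In the examples with quadratic $r$, the Lipschitz property holds only on $G$ (as noted in the LQ example), but that is exactly where the bounds on $s_t$ and $a_t$ apply. So there we would run the same induction using the local Lipschitz constant, which depends on $B^s_t$ and $B^a_t$, both already available at step $t$.
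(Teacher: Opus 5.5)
Your proof is correct and uses essentially the same induction as the paper: you alternately bound $R_t^\theta$ by summing bounds on $|r|$ over the earlier steps, and bound $a_t^\theta$ through the network once $(s_t^\theta,R_t^\theta,z_t)$ are bounded. The only difference is in how these bounds are obtained. The paper takes them as maxima of the continuous functions $|r|$ and $|f|$ over the relevant bounded sets, so continuity alone suffices and your quadratic-reward caveat needs no local Lipschitz constant; your explicit linear-growth constants from the Lipschitz properties of $\sigma$ and $r$ have the small advantage of not needing those maxima to be attained when $\cS\times\cA$ is not closed.
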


\begin{proof}
    The proof is by induction over $t$. For $t=0$ the statement is obvious. Suppose the bounds are correct up to time $t-1.$ We obtain by the continuity of $r$
   \begin{equation}
    |R_t^\theta| \le \sum_{k=0}^{t-1} |r(s_k^\theta,a_k^\theta)| \le \sum_{k=0}^{t-1} \max_{|s|\le B^s_k, |a|\le B^a_{k}}|r(s,a)| =: B^R_t. 
\end{equation} 

Further we have by continuity of $f$
\begin{equation}
    |a_t^\theta| = |f(\theta,s_t^\theta,R_t^\theta,z_t,t)|\le 
    \max_{\theta\in\Theta, |s|\le B_t^s, |R|\le B_t^R, |z|\le B^z, k\le T} |f(\theta,s,R,z,k)| =: B^a_t
\end{equation}
which proves the statement.
\end{proof}
In particular with Assumption \ref{as:1} and on $\Theta\times G$ we can define 
$$\sigma_{max} :=  \max_{\|x\|\le B^s+B^R+B^z+T,\theta\in\Theta}\sigma(W^1 x+b^1),\quad \sigma'_{max} := \sup_{y\in\RR} \sigma'(y), \quad \sigma''_{max} := \sup_{y\in\RR} \sigma''(y)$$
and $B^x := B^s+B^R+B^z+T.$
The proof of the next lemma follows directly from our assumptions.

\begin{lemma}
    Under  Assumption \ref{ass:rFLipgrad}  and with Lemma \ref{lem:boundssa} we obtain constants $B_F^i, B_r^i,B_\nabla^{f,\theta},B_\nabla^{f,i}  $ such that on $\Theta\times G$ for all $t$:
    \begin{equation}
    |\partial_i F(s^\theta_t(\omega),a^\theta_t(\omega),\varepsilon(\omega))| \le  B_F^i, \quad  |\partial_i r(s^\theta_t(\omega),a^\theta_t(\omega))| \le  B_r^i,\quad i=s,a
\end{equation}
\begin{equation}
    \|f_\theta(\theta,x^\theta_t(\omega))\| \le B_\nabla^{f,\theta},\quad |\partial_i f(\theta,x^\theta_t(\omega))| \le  B_\nabla^{f,i}, \quad i=s,R.
\end{equation}
\end{lemma}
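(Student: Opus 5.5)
The last statement is the lemma giving constants $B_F^i, B_r^i, B_\nabla^{f,\theta}, B_\nabla^{f,i}$ that bound the derivatives of $F$, $r$ and the network $f$ along trajectories on $\Theta\times G$. The plan is to reduce every bound to the boundedness of $(s_t^\theta,a_t^\theta,R_t^\theta,z_t)$ on $\Theta\times G$ from Lemma \ref{lem:boundssa} and \eqref{eq:s_bound}. We then either use Lipschitz continuity of the derivatives (Assumption \ref{ass:rFLipgrad}) to control them by their value at a reference point plus a multiple of the distance, or use the explicit form \eqref{eq:definition_NN_formula2} of the network.

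\textbf{Derivatives of $F$ and $r$.} Fix a reference point $(s^\circ,a^\circ)\in\cS\times\cA$ (and $\varepsilon^\circ=0$ for $F$). By Assumption \ref{ass:rFLipgrad}(ii), for $i=s,a$,
\[
|\partial_i F(s_t^\theta,a_t^\theta,\varepsilon)| \le |\partial_i F(s^\circ,a^\circ,0)| + L_\nabla^{F,i}\big(|s_t^\theta-s^\circ|+|a_t^\theta-a^\circ|\big).
\]
Since the right-hand side of Assumption \ref{ass:rFLipgrad}(ii) does not involve $|\varepsilon-\tilde\varepsilon|$, no bound on the noise is needed. Lemma \ref{lem:boundssa} and \eqref{eq:s_bound} then give
\[
B_F^i := |\partial_i F(s^\circ,a^\circ,0)| + L_\nabla^{F,i}\big(B^s+|s^\circ|+B^a+|a^\circ|\big).
\]
The same argument with Assumption \ref{ass:rFLipgrad}(i) yields
\[
B_r^i := |\partial_i r(s^\circ,a^\circ)| + L_\nabla^{r,i}\big(B^s+|s^\circ|+B^a+|a^\circ|\big).
\]
Alternatively, one can note that Lipschitz continuity of $F$ and $r$ (Assumption \ref{ass:rFLip}) already bounds $|\partial_i F|\le \tilde L_F$ and $|\partial_i r|\le L_r$ wherever the derivatives exist. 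This gives the constants directly and is the simpler route.

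\textbf{Derivatives of $f$.} Differentiating \eqref{eq:definition_NN_formula2} gives
\begin{align*}
\partial_{w^2} f &= \sigma(W^1x+b^1), & \partial_{b^2} f &= 1,\\
\partial_{b^1} f &= w^2\sigma'(W^1x+b^1), & \partial_{W^1_j} f &= w^2\sigma'(W^1x+b^1)\,x_j.
\end{align*}
With $x=x_t^\theta=(s_t^\theta,R_t^\theta,z_t,t)$ we have $\|x\|\le B^x$ on $G$, by Lemma \ref{lem:boundssa}, $|z_t|\le B^z$ and $t\le T$. Moreover $\sigma'$ is bounded by $\sigma'_{\max}=\mathrm{Lip}(\sigma)<\infty$ (Assumption \ref{as:1}) and $|w^2|\le B^\theta$. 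Hence
\[
B_\nabla^{f,\theta} := |\sigma_{\max}| + 1 + B^\theta\sigma'_{\max}(1+B^x).
\]
For the input derivatives, $\partial_s f = w^2\sigma'(\cdot)\,w^1_s$ and $\partial_R f = w^2\sigma'(\cdot)\,w^1_R$, so both are bounded by $B_\nabla^{f,i}:=(B^\theta)^2\sigma'_{\max}$.

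\textbf{Potential obstacle.} The only delicate point is that $\sigma'$ may fail to exist at isolated points, for example with ReLU. I would interpret $\sigma'$ as the a.e. derivative, or a fixed one-sided version of it, which is still bounded by the Lipschitz constant of $\sigma$. I would also check that $\sigma_{\max}$ is finite: it is the maximum of a continuous function over a compact set (the product of $\Theta$ with the ball $\|x\|\le B^x$), so it is attained. Everything else is direct substitution of the bounds from Lemma \ref{lem:boundssa}.
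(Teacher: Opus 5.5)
Your proof is correct and follows the route the paper intends; the paper itself offers no argument beyond the remark that the lemma ``follows directly from our assumptions.'' You combine Lipschitz continuity of $\partial_i F,\partial_i r$ (with the observation that Assumption \ref{ass:rFLipgrad}(ii) makes $\partial_i F$ independent of $\varepsilon$) with the trajectory bounds on $G$, and you differentiate the network explicitly using $\|x_t^\theta\|\le B^x$, $|w^2|,|w^1_s|,|w^1_R|\le B^\theta$ and $\sigma'\le\sigma'_{\max}$.

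One small fix is needed. You should bound $|\partial_{w^2}f|=|\sigma(W^1x+b^1)|$ by $\max_{\|x\|\le B^x,\,\theta\in\Theta}|\sigma(W^1x+b^1)|$ rather than by $|\sigma_{\max}|$. The paper defines $\sigma_{\max}$ as a maximum of $\sigma$ without absolute value, so it need not dominate $|\sigma|$ when $\sigma$ takes negative values, e.g.\ for $\tanh$.
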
 
Last but not least we need bounds on $\nabla_\theta s_t^\theta, \nabla_\theta a_t^\theta, \nabla R_t^\theta. $ 
\begin{lemma}\label{lem:bound1}
    Under  Assumption \ref{ass:rFLipgrad} we obtain constants $B_\nabla^{s}, B_\nabla^a, B_\nabla^R$ such that on $\Theta\times G$ for all $t$:
    \begin{equation}
        \|\nabla_\theta s_t^\theta(\omega)\| \le B_\nabla^{s},\;  \|\nabla_\theta a_t^\theta(\omega)\| \le B_\nabla^a,\;  \|\nabla_\theta R_t^\theta(\omega)\| \le B_\nabla^R
    \end{equation}
\end{lemma}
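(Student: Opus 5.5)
We prove Lemma \ref{lem:bound1} by forward induction over $t$, differentiating the recursions for $s_t^\theta$, $a_t^\theta$ and $R_t^\theta$ pathwise in $\theta$ for fixed $\omega\in G$. Throughout we work on $\Theta\times G$, where the preceding lemma supplies the uniform bounds $B_F^i$, $B_r^i$, $B_\nabla^{f,\theta}$ and $B_\nabla^{f,i}$ on the partial derivatives of $F$, $r$ and $f$ along trajectories. At $t=0$ we have $\nabla_\theta s_0^\theta=0$ and $\nabla_\theta R_0^\theta=0$, since $s_0$ is given and $R_0=0$.

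For the inductive step, the chain rule applied to $a_t^\theta=f(\theta,s_t^\theta,R_t^\theta,z_t,t)$ gives
\[
\nabla_\theta a_t^\theta = f_\theta(\theta,x_t^\theta)+\partial_s f(\theta,x_t^\theta)\,\nabla_\theta s_t^\theta+\partial_R f(\theta,x_t^\theta)\,\nabla_\theta R_t^\theta .
\]
Here $z_t$ does not depend on $\theta$, because the noise is simulated independently of the parameters. Hence
\[
\|\nabla_\theta a_t^\theta\|\le B_\nabla^{f,\theta}+B_\nabla^{f,s}\|\nabla_\theta s_t^\theta\|+B_\nabla^{f,R}\|\nabla_\theta R_t^\theta\|.
\]
Next, differentiating $s_{t+1}^\theta=F(s_t^\theta,a_t^\theta,\varepsilon_{t+1})$ and $R_{t+1}^\theta=R_t^\theta+r(s_t^\theta,a_t^\theta)$ gives
\[
\nabla_\theta s_{t+1}^\theta=\partial_sF\,\nabla_\theta s_t^\theta+\partial_aF\,\nabla_\theta a_t^\theta,\qquad \nabla_\theta R_{t+1}^\theta=\nabla_\theta R_t^\theta+\partial_s r\,\nabla_\theta s_t^\theta+\partial_a r\,\nabla_\theta a_t^\theta .
\]
This leads to the linear recursion
\[
\|\nabla_\theta s_{t+1}^\theta\|\le B_F^s\|\nabla_\theta s_t^\theta\|+B_F^a\|\nabla_\theta a_t^\theta\|,\qquad \|\nabla_\theta R_{t+1}^\theta\|\le \|\nabla_\theta R_t^\theta\|+B_r^s\|\nabla_\theta s_t^\theta\|+B_r^a\|\nabla_\theta a_t^\theta\| .
\]

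Stacking $v_t:=(\|\nabla_\theta s_t^\theta\|,\|\nabla_\theta R_t^\theta\|)$, we obtain $v_{t+1}\le C v_t+c$ componentwise. The matrix $C$ and the vector $c$ have nonnegative entries built from the constants above. Iterating from $v_0=0$ over the finite horizon yields
\[
v_t\le \sum_{k=0}^{t-1}C^k c .
\]
This gives time-dependent bounds; taking the maximum over $t=0,\dots,T$ defines $B_\nabla^s$ and $B_\nabla^R$. Inserting them into the bound for $\nabla_\theta a_t^\theta$ then defines $B_\nabla^a$. None of these constants depend on $(\theta,\omega)\in\Theta\times G$.

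The only delicate point is justifying the pathwise differentiation, since Assumption \ref{as:1} allows $\sigma'$ to fail to exist at isolated points, for example for ReLU. We handle this by interpreting $\nabla_\theta$ as the a.e. (Clarke/chosen-subgradient) derivative. The estimates above use only the bounds $|\sigma'|\le\sigma'_{max}$, which already enter through $B_\nabla^{f,\theta}$ and $B_\nabla^{f,i}$, so they remain valid for any such selection. Alternatively, the same recursion, applied to difference quotients with the Lipschitz constants from Assumptions \ref{as:1} and \ref{ass:rFLip}, shows that the maps $\theta\mapsto s_t^\theta, a_t^\theta, R_t^\theta$ are Lipschitz with the same constants. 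That yields the claimed bounds wherever the gradients exist.
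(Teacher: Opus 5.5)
Your proof is correct and follows the paper's own argument. Both use forward induction on the chain-rule recursions for $\nabla_\theta s_t^\theta$, $\nabla_\theta a_t^\theta$, $\nabla_\theta R_t^\theta$, driven by the along-trajectory derivative bounds of the preceding lemma; your inequality $v_{t+1}\le Cv_t+c$ is a compact rewriting of the paper's recursion for $B_\nabla^{s,t},B_\nabla^{a,t},B_\nabla^{R,t}$, followed by the same maximum over $t$. Your extra remark on the isolated kinks of $\sigma$ covers a point the paper leaves implicit, though the difference-quotient alternative yields bounds built from the constants $K_{s,t},K_{a,t},K_{R,t}$ of Lemma~\ref{lem:LR} (up to a dimension factor), not literally the same constants.
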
 

\begin{proof}
We have that 
\begin{eqnarray}\label{eq:nablas0}
\nabla_\theta s_t^\theta &=& \partial_sF(s_{t-1}^\theta,a_{t-1}^\theta,\varepsilon_t) \nabla_\theta s_{t-1}^\theta + \partial_aF(s_{t-1}^\theta,a_{t-1}^\theta,\varepsilon_t) \nabla_\theta a_{t-1}^\theta \\\label{eq:nablaa}
\nabla_\theta a_t^\theta &=& f_\theta(\theta,s_{t}^\theta,R_{t}^\theta,z_t,t)+ \partial_s f(\theta,s_{t}^\theta,R_{t}^\theta,z_t,t) \nabla_\theta s_{t}^\theta + \partial_R f(\theta,s_{t}^\theta,R_{t}^\theta,z_t,t) \nabla_\theta R_{t}^\theta\\
\nabla_\theta R_t^\theta &=& \sum_{k=0}^{t-1}  \partial_sr(s_k^\theta,a_k^\theta) \nabla_\theta s_k^\theta + \partial_ar(s_k^{\theta},a_k^{\theta}) \nabla_\theta a_k^\theta.
\end{eqnarray}
From this it follows inductively that $\|\nabla_\theta s_t^\theta\|\le B_\nabla^{s,t}, \|\nabla_\theta a_t^\theta\| \le B_\nabla^{a,t}, \|\nabla R_t^\theta\|\le B_\nabla^{R,t}$ and the constants satisfy the following recursion:
\begin{eqnarray*}
    B_\nabla^{s,t} &=& B_F^s B_\nabla^{s,t-1}+ B_F^a B_\nabla^{a,t-1}\\
    B_\nabla^{R,t} &=& \sum_{k=0}^{t-1} B_r^s B_\nabla^{s,k}+ B_r^a B_\nabla^{a,k}\\
    B_\nabla^{a,t} &=& B_\nabla^{f,\theta}+ B_\nabla^{f,s} B_\nabla^{s,t} + B_\nabla^{f,R}B_\nabla^{R,t}.
\end{eqnarray*}
As before we define $B_\nabla^{s}=\max_t B_\nabla^{s,t}, B_\nabla^{a} =\max_t B_\nabla^{a,t}, B_\nabla^{R}=\max_t B_\nabla^{R,t}$
\end{proof}

\subsection{Lipschitz properties}
The proof of the following result is obvious, see e.g. \cite{hinderer2005lipschitz}, Lemma 2.1. 
\begin{lemma}\label{lem:lipschitzproperties}
Let $M,M'$ and $M''$ be metric spaces.
\begin{itemize}
\item[a)] If $f,g:M\to M'$ are Lipschitz continuous with constant $L_f$ and $L_g$ respectively, then  $f +g$ is Lipschitz continuous with constant  $L_f+L_g$.
\item[b)] If $f,g:M\to \RR$ are Lipschitz continuous with constant $L_f$ and $L_g$ respectively and  $| f|\le B_f$ and $| g|\le B_g$ on $M$ respectively, then $f \cdot g$ is Lipschitz continuous with constant $B_g L_f+B_f L_g. $
\item[c)] Let $f:M\times M'\to M''$. If $f$ is  Lipschitz continuous separately in the components, i.e.
$$ \|f(x,y)-f(\tilde x,y)\| \le L_1 \|x-\tilde x\|, \quad \|f(x,y)-f(x,\tilde y)\| \le L_2 \|y-\tilde y\|,$$
for suitable $L_1,L_2,$ then for $z=(x,y), \tilde z=(\tilde x,\tilde y)$ we have with $L := \max\{L_1,L_2\}.$
$$ \|f(z)-f(\tilde z)\| \le L \|z-\tilde z\|$$
\item[d)] If $f:M\to M'$ and $g:M'\to M''$ are Lipschitz continuous with constant $L_f$ and $L_g$ respectively, then $g\circ f$ is again Lipschitz continuous with constant $L_f\cdot  L_g$.
\end{itemize}
\end{lemma}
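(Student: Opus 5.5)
The four parts are elementary, and I would prove each one directly from the definition of Lipschitz continuity. Two tacit conventions need to be fixed first. In a) and c) the target spaces $M'$ and $M''$ carry a norm $\|\cdot\|$, so that sums and differences make sense. The product $M\times M'$ in c) is equipped with the sum metric $\|z-\tilde z\|=\|x-\tilde x\|+\|y-\tilde y\|$, consistent with the $L^1$-norm fixed at the beginning of the Appendix.

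For a) I would use the triangle inequality: $\|(f+g)(x)-(f+g)(\tilde x)\|\le\|f(x)-f(\tilde x)\|+\|g(x)-g(\tilde x)\|\le (L_f+L_g)\,d(x,\tilde x)$.

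For b) I would add and subtract the mixed term $f(x)g(\tilde x)$:
\[
f(x)g(x)-f(\tilde x)g(\tilde x)=f(x)\bigl(g(x)-g(\tilde x)\bigr)+g(\tilde x)\bigl(f(x)-f(\tilde x)\bigr).
\]
Bounding $|f(x)|\le B_f$ and $|g(\tilde x)|\le B_g$ then gives the constant $B_fL_g+B_gL_f$. The only point to watch is that the sup bounds must hold on all of $M$, since both $x$ and $\tilde x$ range over $M$. In the later applications this is where the good event $G$ and the bounds of Lemma \ref{lem:boundssa} and Lemma \ref{lem:bound1} are needed.

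For c) I would pass through the intermediate point $(\tilde x,y)$:
\[
\|f(x,y)-f(\tilde x,\tilde y)\|\le \|f(x,y)-f(\tilde x,y)\|+\|f(\tilde x,y)-f(\tilde x,\tilde y)\|\le L_1\|x-\tilde x\|+L_2\|y-\tilde y\|.
\]
The right-hand side is at most $L(\|x-\tilde x\|+\|y-\tilde y\|)=L\|z-\tilde z\|$. The separate Lipschitz bounds are assumed uniformly in the frozen variable, which is what allows them to be applied at $(\tilde x,y)$. This step is the only place where the choice of the sum metric on the product matters. With the Euclidean product metric one would instead obtain the constant $\sqrt2\,L$ or $\sqrt{L_1^2+L_2^2}$.

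For d) I would simply chain the two estimates: $\|g(f(x))-g(f(\tilde x))\|\le L_g\|f(x)-f(\tilde x)\|\le L_gL_f\,d(x,\tilde x)$.

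None of the steps is a genuine obstacle. The only care needed is to record the metric conventions on products and the global validity of the sup bounds in b), since these determine the constants later propagated to $\mathbf L$ in Theorem \ref{lem:Lipschitz_gradient}.
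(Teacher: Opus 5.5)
Your proposal is correct and is essentially the standard argument the paper has in mind: the paper gives no proof of its own, calling the result obvious and citing \cite{hinderer2005lipschitz}, Lemma 2.1. Your caveats are the right ones: the constant $\max\{L_1,L_2\}$ in c) relies on the sum ($L^1$) metric on the product, which the Appendix fixes, and the separate bounds must hold uniformly in the frozen variable.
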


Next we investigate the Lipschitz properties of our neural net. 
\begin{lemma}\label{lem:NNLipschitz}
Let $f : \RR^{11}\to \RR$ be the neural network defined in \eqref{eq:definition_NN_formula}, i.e.
$$ f(\theta,x) = w^2 \sigma(W^1 x+b^1)+b^2$$
for $\theta=(W^1,w^2,b^1,b^2)\in \RR^7$ and $x=(s,R,z,t)\in\RR^4.$ 
On $\Theta\times G$ we obtain
\begin{itemize}
\item[a)] $f$ is jointly  Lipschitz-continuous in $(\theta,x)$ i.e.
$$|f(\theta,x)-f(\tilde\theta,\tilde x)| \le L_f \|(\theta,x)-(\tilde\theta,\tilde x)\| \mbox{ for all } \omega \in G \mbox{ and } \theta,\tilde\theta\in\Theta$$ with constant
$L_f := \max\{ 1, \sigma_{max}, B^{\theta}\sigma'_{max} ,B^{\theta} B^{x} \sigma'_{max}, (B^\theta)^2 \sigma'_{max}\}. $
\item[b)] $\nabla_\theta f$ is jointly  Lipschitz-continuous in $(\theta,x)$  i.e.
$$|\nabla_\theta f(\theta,x)-\nabla_\theta f(\tilde\theta,\tilde x)| \le L_f^\nabla \|(\theta,x)-(\tilde\theta,\tilde x)\| \mbox{ for all } \omega \in G \mbox{ and } \theta,\tilde\theta\in\Theta$$
with constant $L_f^\nabla = L_{b^1}^\nabla+L_{w^2}^\nabla+L_{W^1}^\nabla$
\begin{eqnarray*}
    L_{b_1}^\nabla & :=& \max\{\sigma'_{max},  B^{\theta}\sigma''_{max},  (B^\theta)^2\sigma'_{max}, B^{\theta} B^{x} \sigma''_{max}\}\\
     L_{w^2}^\nabla & :=& \sigma'_{max} \max\{ 1,B^x, B^{\theta}\}\\
 L_{W^1}^\nabla & :=& \max\{\sigma'_{max} B^\theta,  B^x  \sigma'_{max}, B^{\theta} (B^{x})^2\sigma''_{max},  B^{\theta} B^{x} \sigma''_{max},  (B^{\theta})^2 B^{x} \sigma''_{max}\}.
\end{eqnarray*}
\end{itemize}
\end{lemma}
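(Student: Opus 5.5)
The statement to prove is Lemma \ref{lem:NNLipschitz}: on $\Theta\times G$ the network map $f(\theta,x)=w^2\sigma(W^1x+b^1)+b^2$ and its parameter gradient $\nabla_\theta f$ are jointly Lipschitz in $(\theta,x)$ with the stated constants. The plan is to reduce everything to the elementary rules of Lemma \ref{lem:lipschitzproperties}. On $\Theta\times G$ we have $\|\theta\|\le B^\theta$ and $\|x\|\le B^x$ (the latter by Lemma \ref{lem:boundssa}), $|\sigma(W^1x+b^1)|\le\sigma_{max}$, and $|\sigma'|\le\sigma'_{max}$, $|\sigma''|\le\sigma''_{max}$ (using Assumption \ref{as:1}). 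By Lemma \ref{lem:lipschitzproperties} c) it suffices to bound the Lipschitz constant separately in each block of variables, $b^2$, $w^2$, $b^1$, $W^1$ and $x$, and then take the maximum. This is exactly why $L_f$ appears as a $\max$ of individual terms.

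For part a) I would go block by block.
\begin{itemize}
\item In $b^2$ the constant is $1$.
\item In $w^2$ the map is linear with coefficient $\sigma(W^1x+b^1)$, giving $\sigma_{max}$.
\item In $b^1$, the mean value theorem gives $|w^2|\,\sigma'_{max}\le B^\theta\sigma'_{max}$.
\item In $W^1$, we get $|w^2|\sigma'_{max}|(W^1-\tilde W^1)x|\le B^\theta B^x\sigma'_{max}\|W^1-\tilde W^1\|$.
\item In $x$, we get $|w^2|\sigma'_{max}\|W^1\|\le (B^\theta)^2\sigma'_{max}$.
\end{itemize}
Since $\sigma$ is Lipschitz with constant $\sigma'_{max}$, the mean value step is fine even where $\sigma'$ fails to exist at isolated points. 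Taking the maximum gives $L_f$.

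For part b) I would write out the components of the gradient:
\[
\partial_{b^2}f=1,\quad \partial_{w^2}f=\sigma(W^1x+b^1),\quad \partial_{b^1}f=w^2\sigma'(W^1x+b^1),\quad \nabla_{W^1}f=w^2\sigma'(W^1x+b^1)\,x .
\]
Each component is then treated with the product rule Lemma \ref{lem:lipschitzproperties} b) and the composition rule d). Here $\sigma'$ is Lipschitz with constant $\sigma''_{max}$ (Assumption \ref{as:1}), and $y=W^1x+b^1$ is Lipschitz in $(W^1,b^1,x)$ with block constants $B^x$, $1$ and $B^\theta$.
\begin{itemize}
\item The constant component $\partial_{b^2}f$ contributes nothing.
\item $\partial_{w^2}f$ contributes $\sigma'_{max}\max\{1,B^x,B^\theta\}$, which is $L^\nabla_{w^2}$.
\item $\partial_{b^1}f$ is a product of $w^2$, bounded by $B^\theta$ and Lipschitz with constant $1$, and $\sigma'(y)$, bounded by $\sigma'_{max}$ and Lipschitz with constant $\sigma''_{max}$ times the block constant of $y$. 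This produces the entries of $L^\nabla_{b^1}$.
\item $\nabla_{W^1}f$ is a triple product $w^2\cdot\sigma'(y)\cdot x$. Applying b) twice, with bounds $B^\theta$, $\sigma'_{max}$, $B^x$, yields the terms collected in $L^\nabla_{W^1}$.
\end{itemize}
Finally I would sum over components to get $L^\nabla_f=L^\nabla_{b^1}+L^\nabla_{w^2}+L^\nabla_{W^1}$, using that in the $L^1$-norm of the Appendix the norm of a vector is the sum of the component norms.

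The main obstacle is the bookkeeping for $\nabla_{W^1}f$. Its Lipschitz constant with respect to $x$ picks up both the factor $x$ itself and the dependence of $\sigma'(W^1x+b^1)$ on $x$. Similarly, the constant with respect to $W^1$ combines $\sigma''_{max}$ with $(B^x)^2$. I would be careful that each product term is bounded using the a priori bounds valid on $G$. If $\sigma$ is ReLU, then $\sigma'$ is not Lipschitz at $0$. In that case I would either restrict to smooth activations such as $\tanh$, or interpret $\sigma''_{max}$ through the Lipschitz constant of $\sigma'$ that Assumption \ref{as:1} assumes, so that no pointwise second derivative is needed.
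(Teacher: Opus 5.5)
Your proposal is correct and follows essentially the same route as the paper: blockwise Lipschitz bounds in $b^2,w^2,b^1,W^1$ and $x$, combined via Lemma \ref{lem:lipschitzproperties} c) for part a), and the explicit gradient components handled with the product and composition rules for part b), which the paper only sketches by saying the constants are obtained ``similar to part a)''. One caution for the bookkeeping: the $x$-Lipschitz constant of $\partial_{b^1}f=w^2\sigma'(W^1x+b^1)$ is $(B^\theta)^2\sigma''_{max}$ (the entry $(B^\theta)^2\sigma'_{max}$ in $L^\nabla_{b^1}$ is presumably a typo), and the two $x$-contributions to $\nabla_{W^1}f$ add rather than enter a maximum, so your computation will not reproduce the listed entries literally; the total $L^\nabla_f$ still dominates the true constant, e.g.\ because in the $x$-step at fixed $\tilde\theta\in\Theta$ you have $|\tilde w^2|\,\|\tilde W^1\|\le (B^\theta)^2/4$ under the $L^1$ bound $\|\tilde\theta\|\le B^\theta$.
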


\begin{proof}
a) According to Lemma \ref{lem:lipschitzproperties} it is enough to show Lipschitz continuity separately in the components. We obtain
\begin{eqnarray*}
|f(\theta,x)-f(\tilde\theta,x)| &\le& |b^2-\tilde b^2| + |w^2-\tilde w^2| \sigma(W^1x+b^1) + |\tilde w^2| |\sigma(W^1x+b^1)-\sigma(\tilde W^1x+\tilde b^1)|\\
&\le &  |b^2-\tilde b^2| + |w^2-\tilde w^2| \sigma(W^1x+b^1) + |\tilde w^2| |\sigma'(\cdot)| \Big(|b^1-\tilde b^1| +\|(W^1-\tilde W^1)x\| \Big) \\
&\le & \|\theta-\tilde\theta\| \max\{1,\sigma_{max}, B^\theta \sigma'_{max}, B^\theta B^x \sigma'_{max}\}.
\end{eqnarray*}
On the other hand we have
\begin{eqnarray*}
|f(\theta,x)-f(\theta,\tilde x)| &\le&  |w^2|  \sigma'_{max}  \|W^1(x-\tilde x)\| \le (B^\theta)^2 \sigma'_{max} \|x-\tilde x\|.
\end{eqnarray*}
This implies part a).

b) The gradient involves products of parameters of the neural net and $\sigma'$. More precisely we obtain:
\begin{eqnarray*}
    f_{b^1}(\theta,x) &=& w^2 \sigma'(W^1x+b^1) \\
    f_{b^2}(\theta,x) &=& 1 \\
    \nabla_{W^1} f(\theta,x) &=& w^2 \sigma'(W^1x+b^1) \cdot x \\
    f_{w^2}(\theta,x) &=&  \sigma(W^1x+b^1)
\end{eqnarray*}
and 
$$\nabla_\theta f=\Big(f_{b^1}, f_{b^2}, \nabla_{W^1} f, f_{w^2}  \Big). $$
In particular if $f_{b^1}, f_{b^2}, \nabla_{W^1} f, f_{w^2}$  are Lipschitz-continuous with constants $L_{b^2}=0$ and  $L_{b^1},L_{W^1},L_{w^2}$, then $\nabla_\theta f$ is  Lipschitz-continuous with constant  $L_f^\nabla =  L_{b^1}+ L_{b^2}+L_{W^1}+L_{w^2}.$ The Lipschitz constants can be obtained similar to part a).
\end{proof}

\begin{lemma}\label{lem:LR}
Under Assumption \ref{as:1}-\ref{ass:rFLip}
the mapping $\theta \mapsto R_T^\theta$ is a.s. locally Lipschitz continuous. On the set $\Theta\times G$ the function is globally Lipschitz continuous with a constant $K_{R,T}$ which can be obtained recursively, i.e.
$$| R_T^\theta(\omega) - R_T^{\tilde \theta}(\omega)| \le K_{R,T} \|\theta-\tilde\theta\| \mbox{ for all } \omega \in G \mbox{ and } \theta,\tilde\theta\in\Theta.$$
\end{lemma}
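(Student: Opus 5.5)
The plan is to prove Lipschitz continuity of $\theta\mapsto s_t^\theta$, $a_t^\theta$ and $R_t^\theta$ jointly, by forward induction over $t$. The constants will be tracked through a coupled recursion, in the same way as the bounds in Lemma \ref{lem:boundssa} and Lemma \ref{lem:bound1}. Fix $\omega\in G$ and $\theta,\tilde\theta\in\Theta$. Since the noise realizations $(\varepsilon_t(\omega),z_t(\omega))$ are the same for both parameters, every difference comes only from the parameter and from the propagated differences of the states. We write $x_t^\theta=(s_t^\theta,R_t^\theta,z_t,t)$ for the network input.

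The base case is $t=0$. Here $s_0$ is fixed and $R_0=0$, so $x_0^\theta$ does not depend on $\theta$. Lemma \ref{lem:NNLipschitz} a) then gives $|a_0^\theta-a_0^{\tilde\theta}|\le L_f\|\theta-\tilde\theta\|$. For the induction step, assume that on $G$
\[
|s_t^\theta-s_t^{\tilde\theta}|\le K_{s,t}\|\theta-\tilde\theta\|,\quad |R_t^\theta-R_t^{\tilde\theta}|\le K_{R,t}\|\theta-\tilde\theta\|,\quad |a_t^\theta-a_t^{\tilde\theta}|\le K_{a,t}\|\theta-\tilde\theta\|.
\]
\begin{itemize}
\item For the state, Assumption \ref{ass:rFLip} (ii) with the common $\varepsilon_{t+1}(\omega)$ gives $|s_{t+1}^\theta-s_{t+1}^{\tilde\theta}|\le \tilde L_F(K_{s,t}+K_{a,t})\|\theta-\tilde\theta\|$, so we set $K_{s,t+1}:=\tilde L_F(K_{s,t}+K_{a,t})$.
\item For the reward, Assumption \ref{ass:rFLip} (i) gives $|R_{t+1}^\theta-R_{t+1}^{\tilde\theta}|\le |R_t^\theta-R_t^{\tilde\theta}|+L_r(|s_t^\theta-s_t^{\tilde\theta}|+|a_t^\theta-a_t^{\tilde\theta}|)$, so we set $K_{R,t+1}:=K_{R,t}+L_r(K_{s,t}+K_{a,t})$.
\item For the action, $a_{t+1}^\theta=f(\theta,x_{t+1}^\theta)$. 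Lemma \ref{lem:NNLipschitz} a) applies because on $G$ the inputs lie in the bounded region $\|x\|\le B^x$ (Lemma \ref{lem:boundssa}). This yields $|a_{t+1}^\theta-a_{t+1}^{\tilde\theta}|\le L_f(1+K_{s,t+1}+K_{R,t+1})\|\theta-\tilde\theta\|$, since the $z$ and $t$ components coincide.
\end{itemize}
After $T$ steps this produces the constant $K_{R,T}$, which is independent of $\omega\in G$ and of $\theta,\tilde\theta$. This is the global statement on $\Theta\times G$.

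For the a.s. local statement, we take an arbitrary $\omega$. The trajectory then has finitely many values $|z_t(\omega)|$ and $|s_t^\theta(\omega)|$. We want a bound that is uniform over a neighbourhood of $\theta$ in $\Theta$, so that the same recursion runs with $\omega$-dependent constants. There are two possible routes. One is to choose $B^z$ and the $\eta_t$ large enough that $\omega$ lies in the corresponding good event, since these events exhaust $\Omega$ up to a null set. The other is a direct argument: by induction, continuity of $F$, $r$, $f$ and compactness of $\Theta$ show that $\sup_{\theta\in\Theta}|s_t^\theta(\omega)|<\infty$.

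The main obstacle is exactly this boundedness of the inputs. The Lipschitz constant of $f$ in $x$ involves only $(B^\theta)^2\sigma'_{\max}$, but its constant in $\theta$ involves $B^x$, through the term $\|(W^1-\tilde W^1)x\|$. So we need $\|x_t^\theta\|\le B^x$ for both $\theta$ and $\tilde\theta$, on the same $\omega$. On $G$ this is supplied by \eqref{eq:s_bound} and Lemma \ref{lem:boundssa}, and I would rely on exactly that. One subtlety must be handled: $G$ is defined through $\EE[s_t]$, which depends on $\theta$. I would use the $\theta$-uniform bound $B_t^s$ from \eqref{eq:s_bound}, which holds for all $\theta\in\Theta$ simultaneously, and interpret $G$ accordingly.
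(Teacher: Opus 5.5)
Your proof is correct and follows the paper's argument: the same forward induction on $(s_t,R_t,a_t)$ with the coupled constants $K_{s,t}=\tilde L_F(K_{s,t-1}+K_{a,t-1})$ and $K_{a,t}=L_f(1+K_{s,t}+K_{R,t})$, and your incremental recursion for $K_{R,t}$ is just the paper's $K_{R,t}=L_r\sum_{k<t}(K_{s,k}+K_{a,k})$ written one step at a time. Your additional remarks address points the paper leaves implicit, and they are sound: the a.s.\ local claim via continuity of $\theta\mapsto s_t^\theta(\omega)$ together with compactness of $\Theta$, and reading $G$ as the event on which the $\theta$-uniform bound $B_t^s$ holds.
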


\begin{proof}    
We have
\begin{eqnarray*}
| R_T^\theta - R_T^{\tilde\theta}| \le \sum_{k=0}^{T-1} | r(s_k^\theta,a_k^\theta) - r(s_k^{\tilde\theta},a_k^{\tilde\theta})| \le \sum_{k=0}^{T-1} L_{r} \big( | s_k^\theta- s_k^{\tilde\theta}| +|a_k^\theta -a_k^{\tilde\theta}| \big)
\end{eqnarray*}
In what follows denote
\begin{eqnarray*}
\Delta s_t &:=& s_t^\theta- s_t^{\tilde\theta},\\
\Delta a_t &:=& a_t^\theta- a_t^{\tilde\theta},\\
\Delta R_t &:=& R_t^\theta- R_t^{\tilde\theta}.
\end{eqnarray*}
We prove now that there exists constants $K_{s,t}, K_{a,t}, K_{R,t}$ such that for $t=0,1,\ldots ,T-1$
\begin{equation}\label{eq:deltasaR} | \Delta s_t| \le K_{s,t} \|\theta-\tilde\theta\|,\quad | \Delta a_t| \le K_{a,t}\|\theta-\tilde\theta\|, \quad | \Delta R_t| \le K_{R,t}\|\theta-\tilde\theta\|
\end{equation}
which implies the statement.
We do this by induction on $t$. For $t=0$ we have with Lemma \ref{lem:NNLipschitz} that
$$ \Delta s_0=0, \quad \Delta R_0 =0, \quad \Delta a_0\le L_{f} \|\theta-\tilde\theta\|. $$
Now suppose the statement is true up to time $t-1$.
From the state dynamics, the parametrization of the action and the definition of $R_t^\theta$ we obtain
\begin{itemize}
\item[(1)] $| \Delta s_t| \le \tilde L_{F} \big( |\Delta s_{t-1}|+  |\Delta a_{t-1}|\big)$
\item[(2)] $| \Delta R_t| \le \sum_{k=0}^{t-1}  L_{r} \big( | \Delta s_k| + | \Delta a_k| \big)$.
\item[(3)] $| \Delta a_t| \le L_{f} \big( \|\theta-\tilde\theta\| + |\Delta s_{t}|+ |\Delta R_{t}|\big)$
\end{itemize}
These relations obviously imply that the statement in \eqref{eq:deltasaR} is also true for time point $t$. It is also possible to derive the following recursive equations for the Lipschitz constants for $t=1,\ldots, T$:
\begin{eqnarray*}
    K_{s,t} = \tilde L_F \big( K_{s,t-1} +K_{a,t-1}\big),\\
    K_{R,t} = L_r \sum_{k=0}^{t-1}\big( K_{s,k} +K_{a,k}\big),\\
    K_{a,t} = L_f  ( 1+ K_{s,t} +K_{R,t}).
\end{eqnarray*}
The recursion may be simplified when we use larger constants. 
\end{proof}

\begin{lemma}\label{lem:LRgrad}
Under Assumption \ref{as:1}-\ref{ass:rFLipgrad}
the mapping $\theta \mapsto \nabla_\theta R_T^\theta$ is a.s. locally Lipschitz continuous.  On the set $\Theta\times G$ the function is globally Lipschitz continuous with a constant $M_{R,T}$ which can be obtained recursively, i.e.
$$| \nabla_\theta R_T^\theta(\omega) - \nabla_\theta R_T^{\tilde\theta}(\omega)| \le M_{R,T} \|\theta-\tilde\theta\| \mbox{ for all } \omega \in G \mbox{ and } \theta,\tilde\theta\in\Theta.$$
\end{lemma}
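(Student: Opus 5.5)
The plan mirrors the proof of Lemma \ref{lem:LR}, now applied to the pathwise derivatives. We introduce
\[
D s_t := \nabla_\theta s_t^\theta-\nabla_\theta s_t^{\tilde\theta},\quad D a_t := \nabla_\theta a_t^\theta-\nabla_\theta a_t^{\tilde\theta},\quad D R_t := \nabla_\theta R_t^\theta-\nabla_\theta R_t^{\tilde\theta}.
\]
By induction on $t$ we show that there are constants $M_{s,t},M_{a,t},M_{R,t}$ with
\[
\|D s_t\|\le M_{s,t}\|\theta-\tilde\theta\|,\quad \|D a_t\|\le M_{a,t}\|\theta-\tilde\theta\|,\quad \|D R_t\|\le M_{R,t}\|\theta-\tilde\theta\|
\]
on $\Theta\times G$. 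The starting point is the chain-rule recursions \eqref{eq:nablas0}--\eqref{eq:nablaa} and the formula for $\nabla_\theta R_t^\theta$ from the proof of Lemma \ref{lem:bound1}. For $t=0$ we have $Ds_0=0$ and $DR_0=0$, and $\|Da_0\|\le L_f^\nabla\|\theta-\tilde\theta\|$ by Lemma \ref{lem:NNLipschitz} b).

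For the induction step, each recursion is a sum of products of the form (coefficient evaluated along the trajectory) $\times$ (gradient of a previous state, action or reward). We apply the product rule Lemma \ref{lem:lipschitzproperties} b) to each term, using three ingredients.

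First, the gradient factors are bounded by $B^s_\nabla,B^a_\nabla,B^R_\nabla$ (Lemma \ref{lem:bound1}). Their Lipschitz constants in $\theta$ are $M_{\cdot,k}$, $k<t$, by the induction hypothesis.

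Second, the coefficients $\partial_sF,\partial_aF$ and $\partial_sr,\partial_ar$ are bounded by $B_F^i,B_r^i$. They are Lipschitz in $\theta$ by composition (Lemma \ref{lem:lipschitzproperties} d)): Assumption \ref{ass:rFLipgrad} gives Lipschitz continuity in $(s,a)$ with fixed $\varepsilon$, and Lemma \ref{lem:LR} gives $|\Delta s_k|,|\Delta a_k|\le (K_{s,k}+K_{a,k})\|\theta-\tilde\theta\|$. This yields, e.g.,
\[
|\partial_sF(s_{t-1}^\theta,a_{t-1}^\theta,\varepsilon_t)-\partial_sF(s_{t-1}^{\tilde\theta},a_{t-1}^{\tilde\theta},\varepsilon_t)|\le L_\nabla^{F,s}(K_{s,t-1}+K_{a,t-1})\|\theta-\tilde\theta\|.
\]

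Third, the network derivatives $f_\theta,\partial_sf,\partial_Rf$ are evaluated at $(\theta,s_t^\theta,R_t^\theta,z_t,t)$, and $z_t,t$ do not depend on $\theta$. By Lemma \ref{lem:NNLipschitz} b) and its analogue for $\partial_sf,\partial_Rf$, they are jointly Lipschitz in $(\theta,x)$. Composing with Lemma \ref{lem:LR} gives Lipschitz constants $L_f^\nabla(1+K_{s,t}+K_{R,t})$ and similarly for $\partial_sf,\partial_Rf$. The analogue for $\partial_sf=w^2\sigma'(W^1x+b^1)w_s^1$ needs to be checked separately. It follows from the Lipschitz continuity of $\sigma'$ (Assumption \ref{as:1}) and the boundedness of $\theta$ and $x$ on $\Theta\times G$, in the same way as in Lemma \ref{lem:NNLipschitz}.

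Collecting the terms yields explicit recursions, for instance
\[
M_{s,t}=B_\nabla^s L_\nabla^{F,s}(K_{s,t-1}+K_{a,t-1})+B_F^s M_{s,t-1}+B_\nabla^a L_\nabla^{F,a}(K_{s,t-1}+K_{a,t-1})+B_F^a M_{a,t-1}.
\]
The recursions for $M_{R,t}$ and $M_{a,t}$ are analogous. $M_{a,t}$ must be computed after $M_{s,t}$ and $M_{R,t}$, since $\nabla_\theta a_t$ involves $\nabla_\theta s_t$ and $\nabla_\theta R_t$ at the same time $t$. Setting $M_{R,T}$ gives the global statement on $\Theta\times G$.

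For the a.s. local statement, take any $\omega$. On a compact neighbourhood of $\theta$ the trajectory, and hence all quantities, stays bounded, so the same argument works with local bounds in place of $B^s,B^z$.

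The main obstacle is the non-smoothness permitted in Assumption \ref{as:1}, where $\sigma'$ may fail to exist at isolated points, and hence $\sigma''_{max}$ must be read as the Lipschitz constant of $\sigma'$. With ReLU, for example, $\sigma'$ is discontinuous and its Lipschitz continuity fails. We will therefore use that Assumption \ref{as:1} requires $\sigma'$ to be Lipschitz, with $\sigma''_{max}$ interpreted accordingly. Under this reading, all Lipschitz estimates for $\partial_sf,\partial_Rf,\nabla_\theta f$ go through via the mean value inequality applied to $\sigma'$.
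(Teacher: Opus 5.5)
Your proposal is correct and follows the paper's proof essentially step by step. Both run the same induction on the differences $D_{s,t},D_{a,t},D_{R,t}$ of the pathwise gradients, using the chain-rule recursions, the product rule of Lemma \ref{lem:lipschitzproperties} b), the bounds of Lemma \ref{lem:bound1} and the constants $K_{s,t},K_{a,t},K_{R,t}$ of Lemma \ref{lem:LR}, and both obtain the same recursion for $M_{s,t}$ with $M_{a,t}$ computed after $M_{s,t}$ and $M_{R,t}$.

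Your handling of the network term is slightly more careful than the paper's. The paper charges $f_\theta$ only $L_f^\nabla$ in $M_{a,t}$ rather than $L_f^\nabla(1+K_{s,t}+K_{R,t})$, and it writes $\sigma$ for $\sigma'$ in $\partial_s f$. Your remarks on the local statement and on ReLU not meeting the Lipschitz requirement on $\sigma'$ are also valid.
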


\begin{proof}
We have
\begin{eqnarray}\label{eq:nablaR}
\nabla_\theta R_T^\theta = \sum_{t=0}^{T-1}  \partial_sr(s_t^\theta,a_t^\theta) \nabla_\theta s_t^\theta + \partial_ar(s_t^{\theta},a_t^{\theta}) \nabla_\theta a_t^\theta
\end{eqnarray}
In view of Lemma \ref{lem:lipschitzproperties} we have to show that $\partial_sr, \partial_ar,s_t^\theta, a_t^\theta, \nabla_\theta s_t^\theta , \nabla_\theta a_t^\theta$ are all locally Lipschitz.  The Lipschitz properties of $\partial_sr, \partial_ar$ follow from  Assumption \ref{ass:rFLipgrad}. The Lipschitz properties of $s_t^\theta, a_t^\theta$ follow from the previous proof. It remains to show the Lipschitz property of $\nabla_\theta s_t^\theta , \nabla_\theta a_t^\theta.$ 
In what follows denote
\begin{eqnarray*}
D_{s,t}  &:=& \nabla_\theta s_{t}^\theta- \nabla_\theta s_{t}^{\tilde\theta},\\
D_{a,t} &:=& \nabla_\theta a_t^\theta- \nabla_\theta a_t^{\tilde\theta},\\
D_{R,t}  &:=& \nabla_\theta R_t^\theta- \nabla_\theta R_t^{\tilde\theta}.
\end{eqnarray*}
We prove now that there exists constants $M_{s,t}, M_{a,t}, M_{R,t}$ such that for $t=0,1,\ldots ,T-1$
\begin{equation}\label{eq:deltanablasaR} \| D_{s,t}\| \le M_{s,t} \|\theta-\tilde\theta\|,\quad \| D_{a,t}\| \le M_{a,t}\|\theta-\tilde\theta\|, \quad \| D_{R,t} \| \le M_{R,t}\|\theta-\tilde\theta\|.
\end{equation}
As in the previous proof we proceed by induction.
For $t=0$ we have with Lemma \ref{lem:NNLipschitz} that
$$ D_{s,0} =0, \quad D_{R,0} =0, \quad D_{a,0}\le L_{f}^\nabla \|\theta-\tilde\theta\|. $$
Now suppose the statement is true up to time $t-1$. 
Before we proceed with the induction step let us recall that $x=(s,R,z,t)$ and $\partial_sf := \frac{\partial}{\partial s}f$ and $\partial_Rf := \frac{\partial}{\partial R}f.$   Recall the bounds that we introduced on 
 $\Theta\times G$ in subsection \ref{sec:bounds}.

We further have (cp. subsection \ref{sec:bounds}):
\begin{eqnarray}\label{eq:nablas}
\nabla_\theta s_t^\theta &=& \partial_sF(s_{t-1}^\theta,a_{t-1}^\theta,\varepsilon_t) \nabla_\theta s_{t-1}^\theta + \partial_aF(s_{t-1}^\theta,a_{t-1}^\theta,\varepsilon_t) \nabla_\theta a_{t-1}^\theta \\\label{eq:nablaa2}
\nabla_\theta a_t^\theta &=& f_\theta(\theta,s_{t}^\theta,R_{t}^\theta,z_t,t)+ \partial_s f(\theta,s_{t}^\theta,R_{t}^\theta,z_t,t) \nabla_\theta s_{t}^\theta + \partial_R f(\theta,s_{t}^\theta,R_{t}^\theta,z_t,t) \nabla_\theta R_{t}^\theta.
\end{eqnarray}
From the first equation \eqref{eq:nablas} we obtain 
\begin{eqnarray*}
    M_{s,t} &=& B_F^s M_{s,t-1} + B_\nabla^s L_\nabla^{F,s}(K_{s,t-1} +K_{a,t-1})\\
    && + B_F^a M_{a,t-1} + B_\nabla^a L_\nabla^{F,a}(K_{s,t-1} +K_{a,t-1}).
\end{eqnarray*}

From the equation for $\nabla_\theta R_t^\theta$ in \eqref{eq:nablaR} we obtain:

$$M_{R,t} = \sum_{k=0}^{t-1} B_r^s M_{s,k} + B_r^a M_{a,k} + \big( B_\nabla^s L_\nabla^{r,s}+B_\nabla^a L_\nabla^{r,s} \big)\big(K_{s,k}+K_{a,k} \big)$$
It remains to discuss $M_{a,t}.$ In order to do this we need a further look at $\partial_if, i=s,a$ compare \eqref{eq:definition_NN_formula}:
\begin{eqnarray*}
    \partial_sf(\theta,x) &=& w^2 \sigma(w^1_s s + w^1_R R+ w^1_z z + w^1_t (T-t)+ b^1) w^1_s\\ 
    \partial_Rf(\theta,x) &=& w^2 \sigma(w^1_s s + w^1_R R+ w^1_z z + w^1_t (T-t)+ b^1) w^1_R.
\end{eqnarray*}
Using these formulas we obtain for the first expression (the second expression is similar):
\begin{eqnarray*}
  &&  |\partial_sf(\theta,s_t^\theta,R_t^\theta,z_t,t)-\partial_sf(\tilde\theta,s_t^{\tilde\theta},R_t^{\tilde\theta},z_t,t)| \\
&\le& |\partial_sf(\theta,s_t^\theta,R_t^\theta,z_t,t)-\partial_sf(\tilde\theta,s_t^{\theta},R_t^{\theta},z_t,t)| + |\partial_sf(\tilde\theta,s_t^\theta,R_t^\theta,z_t,t)-\partial_sf(\tilde\theta,s_t^{\tilde\theta},R_t^{\tilde\theta},z_t,t)|
\end{eqnarray*}
For the first expression we obtain:
\begin{eqnarray*}
  &&  |\partial_sf(\theta,s_t^\theta,R_t^\theta,z_t,T)-\partial_sf(\tilde\theta,s_t^{\theta},R_t^{\theta},z_t,T)| \le  \|\theta-\tilde\theta\|\;  \max\{B^\theta \sigma'_{max}, (B^\theta)^2B^x \sigma''_{max}  \}
\end{eqnarray*}
For the second expression we obtain:
\begin{eqnarray*}
  &&  |\partial_sf(\tilde\theta,s_t^\theta,R_t^\theta,z_t,T)-\partial_sf(\tilde\theta,s_t^{\tilde\theta},R_t^{\tilde\theta},z_t,T)| \le  \|\theta-\tilde\theta\|\;   (B^\theta)^3 \sigma''_{max} (K_{s,t}+K_{R,t})
\end{eqnarray*}
In total this yields 
\begin{eqnarray*}
  &&  |\partial_sf(\theta,s_t^\theta,R_t^\theta,z_t,T)-\partial_sf(\tilde\theta,s_t^{\tilde\theta},R_t^{\tilde\theta},z_t,T)| \\
&\le& \|\theta-\tilde\theta\|\; \max\{(B^\theta)^3 \sigma''_{max} (K_{s,t}+K_{R,t}), B^\theta \sigma'_{max}, (B^\theta)^2B^x \sigma''_{max} \  \}\\
&=:& \|\theta-\tilde\theta\|\;  L_{f,t} ^{\nabla_x} 
\end{eqnarray*}

The Lipschitz constants for $\partial_Rf$ is the same.
Thus, we obtain from \eqref{eq:nablaa2} that
$$ M_{a,t} = L_f^\nabla+ B_\nabla^{f,s} M_{s,t} + B_\nabla^s L_{f,t} ^{\nabla_x} +
 B_\nabla^{f,R} M_{R,t} + B_\nabla^R L_{f,t} ^{\nabla_x} $$
Finally, the induction step is complete and the statement follows.
\end{proof}

\begin{proof} of Theorem \ref{lem:Lipschitz_gradient}:
First note that we can write the gradient more explicitly as
\begin{eqnarray*}
     \nabla_\theta \mathcal{L}_L(\theta) &=&
   2 \sum_{\ell=1}^L \beta_\ell  \left[ \left(Re(\varphi^\star)(u_\ell)-\EE_G[\cos(u_\ell R_T^\theta)]\right)\cdot \mathbb{E}_G\left[ \sin(u_\ell R_T^\theta) \nabla_\theta R_T^\theta \right] \right] \\
   && - 2 \sum_{\ell=1}^L  \beta_\ell \left[ \left(Im(\varphi^\star)(u_\ell)-\EE_G[\sin(u_\ell R_T^\theta)]\right)\cdot \mathbb{E}_G\left[ \cos(u_\ell R_T^\theta) \nabla_\theta R_T^\theta \right] \right] 
\end{eqnarray*}
Now from Lemma \ref{lem:LR} it follows that
$$\theta \mapsto \EE_G[\cos(u R_T^\theta)] \quad\mbox{   and    } \quad\theta \mapsto \EE_G[\sin(u R_T^\theta)]$$
are both Lipschitz-continuous with constant $u K_{R,T}.$
Lemma \ref{lem:LR} together with Lemma \ref{lem:LRgrad} imply that 
$$\theta \mapsto \EE_G[\sin(u R_T^\theta) \nabla_\theta R_T^\theta ] \quad \mbox{   and    }\quad \theta \mapsto \EE_G[\cos(u R_T^\theta) \nabla_\theta R_T^\theta]$$
are both Lipschitz-continuous with constant $M_{R,T} + u K_{R,T} B_\nabla^R.$ Using the rules for the Lipschitz constants from Lemma \ref{lem:lipschitzproperties} we obtain 
the following Lipschitz constant for $ \nabla_\theta \mathcal{L}_L$ using the $L_1$-norm:
\begin{eqnarray*}
    \tilde L &=& 8 M_{R,T} \sum_{\ell=1}^L \beta_\ell  + 12 B_\nabla^R K_{R,T} \sum_{\ell=1}^L |u_\ell\beta_\ell|.
\end{eqnarray*}
Noting that for $x\in\RR^n$ we have $\|x\|_2 \le \|x\|_1\le \sqrt{n} \|x\|_2$ we obtain 
 the result with $\mathbf{L} =\sqrt{7} \tilde L$.
\end{proof}

\begin{proof}
of Lemma \ref{lem:bias}: We fix an iteration index $k\in\mathbb N$ and work throughout \emph{conditionally} on the
$\sigma$--algebra $\cF_k^G$.
Under this conditioning, the parameter $\theta_k$ is deterministic, and all bounds derived
on the good event $G$ hold pathwise.
In particular, by Lemma \ref{lem:bound1}
\[
\|\nabla_\theta R_T^{\theta_k}\|\;\le\; B_\nabla^R
\qquad\text{almost surely on } G.
\]

Let $(R_m,\nabla_\theta R_m)_{m=1}^M$ be i.i.d.\ copies of
$(R_T^{\theta_k},\nabla_\theta R_T^{\theta_k})$ under the current parameter $\theta_k$. Thus, these Monte Carlo samples are conditionally i.i.d.\ given $\cF_k^G$. All expectations below are therefore taken with respect to these samples only, while $\theta_k$ is treated as fixed.

\medskip
\noindent\textbf{Step 1: Bias.}

Fix a frequency node $u=u_\ell$ and, to simplify notation, suppress the index $\ell$
throughout this step.
Recall that for this node the empirical characteristic function is given by
\[
\widehat\varphi(u)
:=
\frac1M\sum_{j=1}^M e^{iu R_j}.
\]

For each sample index $j\in\{1,\dots,M\}$, define the pathwise gradient contribution
\[
U_j(u)
:=
iu\,e^{iu R_j}\,\nabla_\theta R_j
\;\in\;\mathbb{C}^7.
\]
With this notation, the contribution of the frequency node $u_\ell$ to the full estimator
$g(\theta_k)$ can be written as
\[
g_\ell
=
\frac{2\beta_\ell}{M}\sum_{j=1}^M
\text{Re}\Big(
\overline{\widehat\varphi(u)-\varphi^\star(u)}\;
U_j(u)
\Big).
\]

We emphasize that the same Monte Carlo batch
$(R_m)_{m=1}^M$ is used both to construct the empirical characteristic function
$\widehat\varphi(u)$ and to evaluate the gradient terms $U_j(u)$.
This coupling is the source of the finite-sample bias analyzed below. Taking conditional expectation and using linearity of $\text{Re}(\cdot)$,
\[
\EE[g_\ell\mid \cF_k^G]
=
\frac{2\beta_\ell}{M}\sum_{j=1}^M
\text{Re}\Big(\EE[\overline{\widehat\varphi(u)}\,U_j(u)\mid \cF_k^G]
-\overline{\varphi^\star(u)}\,\EE[U_j(u)\mid \cF_k^G]\Big).
\]
Because the samples are i.i.d., it suffices to compute $\EE[\overline{\widehat\varphi(u)}\,U_1(u)\mid \cF_k^G]$:
\[
\EE[\overline{\widehat\varphi(u)}\,U_1(u)\mid \cF_k^G]
=
\frac1M\EE[\overline{e^{iu R_1}}\,U_1(u)\mid \cF_k^G]
+\frac{M-1}{M}\EE[\overline{e^{iu R_2}}\mid \cF_k^G]\EE[U_1(u)\mid \cF_k^G].
\]
Now $\overline{e^{iu R_1}}\,U_1(u)= iu \nabla_\theta R_1$,
and we denote $\EE[{e^{iu R_2}}\mid \cF_k^G]=\EE_G[{e^{iu R_T^{\theta_k}}}] =:{\varphi_{\theta_k}(u)}$.
Thus $\EE[U_1(u)\mid \cF_k^G]=\nabla_\theta\varphi_{\theta_k}(u)$
and we obtain
\[
\EE[\overline{\widehat\varphi(u)}\,U_1(u)\mid \cF_k^G]
=
\frac1M\,iu\,\EE[\nabla_\theta R_1\mid \cF_k^G]
+\frac{M-1}{M}\,\overline{\varphi_{\theta_k}(u)}\,\nabla_\theta\varphi_{\theta_k}(u).
\]
Plugging back yields
\[
\EE[g_\ell\mid \cF_k^G]
=
2\beta_\ell\,\text{Re}\Big(\overline{\varphi_{\theta_k}(u)-\varphi^\star(u)}\,\nabla_\theta\varphi_{\theta_k}(u)\Big)
+\frac{2\beta_\ell}{M}\text{Re}\Big(iu\,\EE[\nabla_\theta R_1\mid \cF_k^G]-\overline{\varphi_{\theta_k}(u)}\,\nabla_\theta\varphi_{\theta_k}(u)\Big).
\]
Summing over $\ell$ gives \eqref{eq:bias-decomposition}--\eqref{eq:bias-term}.

To bound the bias, use $\|\nabla_\theta R_1\|_2\le \|\nabla_\theta R_1\|\le B_\nabla^R$ and
$\|\nabla_\theta\varphi_{\theta_k}(u)\|_2
\le \|\nabla_\theta\varphi_{\theta_k}(u)\|
\le \EE[|u|\,\|\nabla_\theta R_1\|\mid \cF_k^G ]\le |u| B_\nabla^R$,
and $|\varphi_{\theta_k}(u)|\le 1$, obtaining \eqref{eq:bias-bound}.

\medskip
\noindent\textbf{Step 2: Second moment.}
For each $(\ell,j)$ define
\[
Y_{\ell,j}
:=\mathrm{Re}\Big(\overline{\hat{\varphi}(u_\ell)-\varphi^*(u_\ell)}\,iu_\ell e^{iu_\ell R_j}\nabla_\theta R_j\Big)\in\RR^7,
\qquad
g(\theta_k)=\frac{2}{M}\sum_{\ell=1}^{L}\beta_\ell\sum_{j=1}^M Y_{\ell,j}.
\]
We have
$|\hat{\varphi}(u_\ell)-\varphi^*(u_\ell)|\le  2$
and $|e^{i\cdot}|=1$. Hence, for all $(\ell,j)$ we obtain on  $G$
\begin{equation}\label{eq:Y-bound}
\|Y_{\ell,j}\|
\le 2|u_\ell|\,\|\nabla_\theta R_j\|
\le 2|u_\ell|\,B_\nabla^R.
\end{equation}

Next, by the triangle inequality,
\[
\Big\|\sum_{j=1}^M Y_{\ell,j}\Big\|
\le \sum_{j=1}^M \|Y_{\ell,j}\|
\le M \max_{1\le j\le M}\|Y_{\ell,j}\|.
\]
Thus, we obatin
\[
\|g(\theta_k)\|
\le \frac{2}{M}\sum_{\ell=1}^{L}|\beta_\ell|
\Big\|\sum_{j=1}^M Y_{\ell,j}\Big\|
\le \frac{2}{M}\sum_{\ell=1}^{L}|\beta_\ell|\,
\Big(2M|u_\ell|B_\nabla^R\Big)
=4B_\nabla^R\sum_{\ell=1}^{L}|\beta_\ell u_\ell|.
\]
Therefore, on $G$,
\begin{equation}\label{eq:g-det-bound}
\|g(\theta_k)\|_2^2
\le 16\big(B_\nabla^R\big)^2
\Big(\sum_{\ell=1}^{L}|\beta_\ell u_\ell|\Big)^2.
\end{equation}
Taking conditional expectations preserves the inequality, so we obtain the statement.
\end{proof}

\bibliographystyle{apalike}
\bibliography{sample}
 
\end{document}